\newcommand{\N}{\mathbb{N}}
\newcommand{\E}{\mathbb{E}}
\newtheorem{theorem}{Theorem}
\newtheorem{corollary}[theorem]{Corollary}
\newtheorem{definition}[theorem]{Definition}
\newtheorem{comments}[theorem]{Comments}
\newtheorem{example}[theorem]{Example}
\newtheorem{examples}[theorem]{Examples}
\newtheorem{lemma}[theorem]{Lemma}
\newtheorem{proposition}[theorem]{Proposition}
\newtheorem{remark}[theorem]{Remark}
\renewenvironment{proof}{\textit{Proof}}{${}\hfill\square$}
\def\~{\tilde}
\begin{document}

\title{ On Darboux Theorem for symplectic forms
on direct limits 
of symplectic Banach manifolds}
\author{Fernand Pelletier}
\date{}
\maketitle

\begin{abstract}
Given an ascending sequence of weak symplectic Banach manifolds on which the Darboux  theorem is true,
we can ask about conditions under which  the Darboux Theorem is also true on the direct limit. We will show  in general, 
without very strong conditions, the answer is negative. In particular we give an example   of an ascending
weak symplectic Banach manifolds on which the Darboux Theorem is  true but not on the direct limit. In a second part,
 we illustrate this discussion in the context of  an ascending sequences  of Sobolev manifolds of loops in  symplectic finite dimensional manifolds. 
This context gives rise to an example of direct limit of weak symplectic Banach manifolds on which the Darboux  theorem is true
 around any point.

\end{abstract}


\section{Introduction}

For any finite dimensional symplectic manifold, the Darboux Theorem asserts
that, around each point, there exists a (Darboux) chart in which the $2$-form
can be written as a constant one  {\it i.e.} the classical linear Darboux form. Such a
result can be proved by induction on the dimension of the manifold. However
using an idea of Moser for volume form on compact manifold (\cite{Mos}), the
Darboux theorem can be also proved by using an isotopy obtained by the local
flow of a time dependent vector field. Such a method is classically called  the
\textit{Moser's method} and works in many other frameworks.\newline

In the Banach context, it is well known that a symplectic form can be strong
or weak (see Definition \ref{D_WeaklyNonDegenerateBilinearForm}). The Darboux
Theorem was firstly proved for strong  symplectic  Banach manifolds   Weinstein (\cite{Wei}). But
Marsden (\cite{Ma2}) showed that the Darboux theorem fails for a weak
symplectic Banach manifold. However Bambusi \cite{Bam} found necessary and
sufficient conditions for the validity of Darboux theorem for a weak
symplectic Banach manifold (\textit{Darboux-Bambusi Theorem}). The proof of all
these versions of Darboux Theorem were all established by Moser's
method.\newline

In a wider context like Fr\'{e}chet or convenient manifolds, a symplectic form
is always weak. Unfortunately,  the  Moser  method does  not work in this framework
since the flow of vector filed does not exist in general. Recently, a new
approach of differential geometry in Fr\'{e}chet context was initiated and
developed by G. Galanis, C. T. J. Dodson, E. Vassiliou and their collaborators
in terms of projective limits of Banach manifolds (see \cite{DGV} for a
panorama of these results). In this situation, P. Kumar, in \cite{Ku1}, proves
a version of Darboux Theorem Moser
method  under very strong assumptions.\newline

The first part of this work is devoted to the problem of the validity of   Darboux Theorem
on  a direct limit of Banach manifolds.  More precisely given a countable
ascending sequence
\[
M_{1}\subset M_{2}\subset\cdots\subset M_{n}\subset M_{n+1}\subset\cdots
\]
of Banach manifolds, then $M=\bigcup\limits_{n\in\mathbb{N}^{\ast}}M_{n}$ is
the direct limit of this ascending sequence. Under the assumption of existence
of "chart limit" (\textit{cf.} Definition \ref{DLChartProperty}), we can
provide $M$ with a structure of convenient structure (not necessary
Hausdorff). If we consider a sequence $\left(  \omega_{n}\right)
_{n\in\mathbb{N}^{\ast}}$ of (strong or weak) symplectic form $\omega_{n}$ on
$M_{n}$ such that $\omega_{n}$ is the restriction of $\omega_{n+1}$
to $M_{n}$ (for all $n>0$), we obtain a symplectic form $\omega$ on the
convenient manifold $M$. Then the validity of a Darboux theorem on direct
limit of Banach manifolds can be studied. In this situation, we give simple
\textit{necessary and sufficient conditions for the existence of a Darboux
chart around some point of the direct limit }(Theorem \ref{equivDarbouxascending}).
Unfortunately,  these conditions are not satisfied in general. For instance, we
can look for applying the Moser method on the direct limit of Banach symplectic
manifolds which satisfy the Darboux-Bambusi Theorem assumptions. But as in the
Fr\'{e}chet framework, \textit{without trivial cases or very strong
assumptions, a direct limit }$X$\textit{ of a sequence }$\left(  X_{n}\right)
_{n\in\mathbb{N}^{\ast}}$\textit{ of vector fields }$X_{n}$\textit{ on }%
$M_{n}$\textit{ does not have a local flow in general} (\textit{cf.} Appendix \ref{solODE}). 
This implies that the \textit{Moser's method does not work in
 general in this context}. This is explained in details in section
\ref{no_Darboux_ascending}. However, under \textit{very strong assumptions, we
could  prove a Darboux Theorem}. But it seems that these conditions are so restrictive that
 they are not satisfied in a large context.    Thus we believe that such a result would not be relevant.

To complete this approach, we produce \textit{an example} of direct limit of symplectic
Banach manifolds  $M_n$ on which \textit{there exists no Darboux chart} around some
point. In fact  around such a point $x_0$, we have a  Darboux chart $(U_n,\psi_n)$ in  $M_n$ for each $n\geq n_0$ 
but  we have  $
 \bigcap_{n\geq n_0}U_n=\{x_0\}$.
In the general context it is such a situation which is the essential reason for which the Darboux Theorem can be not true on  a 
 direct limit of symplectic Banach manifold ({\it cf.} Theorem \ref{equivDarbouxascending}).
\newline

As an illustration of this previous situation,  the second part of this paper is devoted to the
existence of a Darboux chart on a Sobolev manifold of loops in a finite
dimension symplectic manifold and  on a direct limit of such Banach manifolds. In
some words, we consider the Sobolev manifold $\mathsf{L}_{k}^{p}%
(\mathbb{S}^{1},M)$ of loops $\gamma:\mathbb{S}^{1}\rightarrow M$ of Sobolev
class $\mathsf{L}_{k}^{p}$, where $M$ is a manifold of dimension $m$. These
manifolds are modelled on the reflexive Banach spaces $\mathsf{L}_{k}%
^{p}(\mathbb{S}^{1},\mathbb{R}^{m})$. Now, if $\omega$ is a symplectic form on
$M$, we can define a natural symplectic form $\Omega$ on $\mathsf{L}_{k}%
^{p}(\mathbb{S}^{1},M)$ which satisfies the assumptions of Darboux-Bambusi
Theorem. Then, around each $\gamma\in\mathsf{L}_{k}^{p}(\mathbb{S}^{1},M)$, we
have a Darboux chart. Note that this situation \textit{gives an illustration
to Darboux-Bambusi Theorem which is new to our knowledge}. The problem of
existence of a Darboux chart on an ascending sequence of symplectic Sobolev
manifolds of loops is similar to the general context. However, there exists 
ascending sequences of such manifolds on which a Darboux theorem is true.
\newline

This work is self contained.

\noindent In section 2,  after a survey on  known results on symplectic forms on a Banach space,
we look for the context of Darboux-Bambusi Theorem.
In fact we recover this Theorem as a consequence of generalized version of "Moser's Lemma".

The context of direct limit of ascending sequence of symplectic Banach
manifolds is described in section 3. We begin with the case of a direct limit
of an ascending sequence of Banach spaces and we describe the link between
symplectic forms on the direct limit and a sequence of "coherent " symplectic
forms on each Banach space. 
Then we look for the same situation on an ascending sequence of Banach vector
bundles. We end the first part by a discussion on the problem the existence of a
Darboux Theorem on a direct limit of symplectic Banach manifolds, in the
general framework at first, and then under the assumption of Darboux-Bambusi
Theorem. In particular, in this situation we produce an example for which the
Darboux Theorem fails.

Section \ref{symplecticloops} essentially describes a symplectic structure
on Sobolev manifolds of loops. More precisely, after a survey of classical
results on Sobolev spaces, we recall how to define the Sobolev manifolds $\mathsf{L}%
_{k}^{p}(\mathbb{S}^{1},M)$ of loops $\gamma:\mathbb{S}^{1}\rightarrow M$ of
Sobolev class $\mathsf{L}_{k}^{p}$ in a manifold $M$ of dimension $m$. 
Then, when $(M,\omega)$ is a symplectic form, as in \cite{Ku2},
$\mathsf{L}_{k}^{p}(\mathbb{S}^{1},M)$ can be endowed with a natural
weak symplectic form $\Omega$. After, we show that the assumptions
of Darboux-Bambusi Theorem are satisfied for $\Omega$ on $\mathsf{L}_{k}%
^{p}(\mathbb{S}^{1},M)$. We also prove some complementary results using
Moser's method. Any direct limit of Sobolev manifolds of loops in an ascending
sequence symplectic manifold $\left\{  (M_{n},\omega_{n})\right\}
_{n\in\mathbb{N}^{\ast}}$ can be provided with a symplectic form which is
obtained (in the same way as previously) from the symplectic form $\omega$ on
the direct limit of manifolds $\left(  M_{n}\right)  $ (defined from $\left(
\omega_{n}\right)  $). We end this section with an example of an ascending
sequence of such manifolds on which a Darboux theorem is true at any point.
\newline

Finally, in Appendix A, we recall all  results  on
direct limit of manifolds and Banach bundles needed in this work. In Appendix B, we discuss about
the problem of existence of solutions of a direct limit of ODE on a direct
limit of Banach spaces. After having given strong sufficient conditions for
the existence of such solutions, we comment the coherence and the pertinence
of these conditions through examples and counter-examples


\section{Symplectic forms on a Banach manifold and Darboux Theorem}

\label{_SymplecticFormsOnBanachBundles}

\subsection{Symplectic forms on Banach space}

\label{linearsymplectic}

\begin{definition}
\label{D_WeaklyNonDegenerateBilinearForm}Let $\mathbb{E}$ be a Banach space. A
bilinear form $\omega$ is said to be weakly non degenerate if
$\left(  \forall Y\in\mathbb{E},\ \omega\left(  X,Y\right)  =0\right)
\quad\Longrightarrow\quad X=0$.

\end{definition}

Classically, to $\omega$ is associated the linear map 

$\;\;\;\; \omega^{\flat}:  \mathbb{E}  \longrightarrow  \mathbb{E}^{\ast}$
\noindent defined by $ \left(\omega^{\flat}(X)\right)(Y)=\omega\left(  X,Y\right),\;:\;  \forall Y\in \mathbb{E}$.

Clearly,  $\omega$ is weakly non degenerate if and only if  $\omega^{\flat}$ is
injective. \\
 The $2$-form $\omega$ is  called  \textit{strongly nondegenerate} if $\omega^\flat$ is an isomorphism.
.

\bigskip

A fundamental result in finite dimensional linear symplectic space is the
existence of a \textit{Darboux (linear) form} for a symplectic $2$-form:

If $\omega$ is a symplectic form on a finite dimensional vector space
$\mathbb{E}$, there exists a vector space $\mathbb{L}$ and an isomorphism
$A:\mathbb{E}\rightarrow\mathbb{L}\oplus\mathbb{L}^{\ast}$ such that
$\omega=A^{\ast}\omega_{_{\mathbb{L}}}$ where
\begin{equation}
\omega_{_{\mathbb{L}}}\left((u,\eta),(v,\xi)\right)=<\eta,v>-<\xi,u> \label{Darboux}%
\end{equation}
This result is in direct relation with the notion of \textit{Lagangian
subspace} which is a fundamental tool in the finite dimensional symplectic framework.

\bigskip

In the Banach framework, let $\omega$ be a weak symplectic form on a Banach space.

A subspace $\mathbb{F}$ is \textit{isotropic} if $\omega(u,v)=0$ for all
$u,v\in\mathbb{F}$. An isotropic subspace is always closed. \newline If
$\mathbb{F}^{\perp_{\omega}}=\{w\in\mathbb{E}\;:\forall u\in\mathbb{F}%
,\ \omega(u,v)=0\;\}$ is the orthogonal symplectic space of $\mathbb{F}$, then
$\mathbb{F}$ is isotropic if and only if $\mathbb{F}\subset\mathbb{F}%
^{\perp_{\omega}}$ and is \textit{maximal isotropic if }$\mathbb{F}%
=\mathbb{F}^{\perp_{\omega}}$\textit{.} Unfortunately, in the Banach
framework, a maximal isotropic subspace $\mathbb{L}$ can be not supplemented.
Following Weinstein's terminology (\cite{Wei}), an isotropic space
$\mathbb{L}$ is called a \textit{Lagrangian space} if there exists an
isotropic space $\mathbb{L}^{\prime}$ such that $\mathbb{E}=\mathbb{L}%
\oplus\mathbb{L}^{\prime}$. Since $\omega$ is strong non degenerate, this
implies that $\mathbb{L}$ and $\mathbb{L}^{\prime}$ are maximal isotropic and
then are Lagrangian spaces (see \cite{Wei}).

Unfortunately, in general, for a given symplectic structure, Lagrangian
subspaces need not exist (\textit{cf. }\cite{KaSw}). Even for a strong
symplectic structure on Banach space which is not Hilbertizable, the non
existence of Lagrangian subspaces is an open problem to our knowledge.
Following \cite{Wei}, a symplectic form $\omega$ on a Banach space
$\mathbb{E}$ is a \textit{Darboux (linear) form} if there exists a Banach
space $\mathbb{L}$ and an isomorphism $A:\mathbb{E}\rightarrow\mathbb{L}%
\oplus\mathbb{L}^{\ast}$ such that $\omega=A^{\ast}\omega_{\mathbb{L}}$ where
$\omega_{\mathbb{L}}$ is defined in (\ref{Darboux}). Note that in this case
$\mathbb{E}$\textit{ must be reflexive}.

\begin{examples}\normalfont${}$
\begin{enumerate}
\item[1.] If $\omega$ is a linear Darboux form on a Hilbert space $\mathbb{H}%
$, it is always true that there exists a Lagrangian decomposition and all
symplectic forms are isomorphic to such a Darboux form (\textit{cf.}
\cite{Wei}). However, for a general Banach space, this is not true as the
following example shows (cf. \cite{KaSw}).\newline${}\;\;$ It is well known
that for $1\leq p<\infty$, the vector space $\ell^{p}$ of $p$-summable real
sequences is a Banach space which can be written as $l^{p}=l_{1}^{p}\oplus
l_{2}^{p}$, where $l_{1}^{p}$ and $l_{2}^{p}$ are Banach subspaces which are
isomorphic to $l^{p}$. Now, for $p\not =2$, consider $\mathbb{E}=l^{p}\oplus
l^{q}$, where $l^{q}$ is isomorphic to $(l^{p})^{\ast}$. For the canonical
Darboux form on $\mathbb{E}$, we have at least two types of non "equivalent
Lagrangian decompositions": one is $\mathbb{E}=l^{p}\oplus l^{q}$ for which
$l^{p}$ is not isomorphic to $l^{q}$ and another one is $\mathbb{E}=(l_{1}%
^{p}\oplus l_{1}^{q})\oplus(l_{2}^{p}\oplus l_{2}^{q})$ for which the
Lagrangian subspace $(l_{1}^{p}\oplus l_{1}^{q})$ is isomorphic to the
Lagrangian subspace $(l_{2}^{p}\oplus l_{2}^{q})$.

\item[2.] Given a Hilbert space $\mathbb{H}$ endowed with an inner product
$<\;,\;>$; thanks to Riesz theorem, we can identify $\mathbb{H}$ with
$\mathbb{H}^{\ast}$. Therefore, we can provide $\mathbb{H}\times\mathbb{H}$
with a canonical Darboux symplectic form $\omega$. Consider now any Banach
space $\mathbb{E}$ which can be continuously and densely embedded in
$\mathbb{H}$, then $\omega$ induces a symplectic form on $\mathbb{E}%
\times\mathbb{E}$ in an obvious way. This situation occurs for instance with
$\mathbb{H}=l^{2}(\mathbb{N})$ and $\mathbb{E}=l^{1}(\mathbb{N})$.

\item[3.] Let $\widehat{\omega}$ be a Darboux form on a Banach space
$\widehat{\mathbb{E}}$ and let $A:\mathbb{E}\rightarrow\widehat{\mathbb{E}}$
be an injective continuous linear map. We set $\omega=A^{\ast}\widehat{\omega
}$. On $\widehat{\mathbb{E}}$ we have a decomposition $\widehat{\mathbb{E}%
}=\widehat{\mathbb{L}}\oplus\widehat{\mathbb{L}}^{\ast}$. Set $\mathbb{L}%
=A^{-1}(\widehat{\mathbb{L}})$ and $\mathbb{L}^{\prime}=A^{-1}(\widehat
{\mathbb{L}})$, then we have $\mathbb{E}=\mathbb{L}\oplus\mathbb{L}^{\prime}$.
Moreover, since $\widehat{\mathbb{L}}$ and $\widehat{\mathbb{L}}^{\ast}$ are
Lagrangian (relatively to $\widehat{\omega}$) so $\mathbb{L}$ and
$\mathbb{L}^{\prime}$ are Lagrangian (relatively to $\omega$).\newline Using
analogous arguments as in Point 1., if $p\not =2$, this situation occurs for
$\hat{E}=L^{p}([0,1])\oplus L^{q}([0,1])$ (where $L^{q}([0,1])$ is isomorphic
to $(L^{p}([0,1]))^{\ast}$) and $\mathbb{E}=L^{r}([0,1])\oplus L^{r}([0,1])$
for $r\geq\sup\{p,q\}$ and $A$ is the canonical embedding of $L^{r}%
([0,1])\oplus L^{r}([0,1])$ into $L^{p}([0,1])\oplus L^{q}([0,1])$. Note that,
in this case, $\hat{\mathbb{E}}$ is a reflexive Banach space which is not Hilbertizable.
\end{enumerate}
\end{examples}

\begin{remark}\normalfont
\label{nondecomposable}Note that for all the previous examples of symplectic
forms, we have a decomposition of the Banach spaces into Lagrangian spaces and 
the symplectic forms are some pull-backs of a Darboux form. The reader can
find in \cite{Swa} an example of a symplectic form on a Banach space which is
not the pull-back of a Darboux form. \newline
\end{remark}

\bigskip

Let $\mathbb{E}$ be a Banach space provided with a norm $||\;||$. We consider
a symplectic form $\omega$ on $\mathbb{E}$ and let $\omega^{\flat}%
:\mathbb{E}\rightarrow\mathbb{E}^{\ast}$ be the associated bounded linear
operator. Following \cite{Bam} and \cite{Ku1}, on $\mathbb{E}$, we consider
the norm $||u||_{\omega}=||\omega^{\flat}(u)||^{\ast}$ where $||\;||^{\ast}$
is the canonical norm on $\mathbb{E}^{\ast}$ associated to $||\;||$. Of
course, we have $||u||_{\omega}\leq||\omega^{\flat}||^{\mathrm{op}}.||u||$
(where $||\omega^{\flat}||^{\mathrm{op}}$ is the norm of the operator
$\omega^{\flat}$) and so the inclusion of the normed space $(\mathbb{E}%
,||\;||)$ in $(\mathbb{E},||\;||_{\omega})$ is continuous. We denote by
$\widehat{\mathbb{E}}$ the Banach space which is the completion of
$(\mathbb{E},||\;||_{\omega})$. Since $\omega^{\flat}$ is an isometry from
$(\mathbb{E},||\;||_{\omega})$ onto its range in  $\mathbb{E}^{\ast}$, we can extend
$\omega^{\flat}$ to a bounded operator $\widehat{\omega}^{\flat}$ from
$\widehat{\mathbb{E}}$ to $\mathbb{E}^{\ast}$. Assume that $\mathbb{E}$ is
\textit{reflexive}. Therefore $\hat{\omega}^{\flat}$ is an isometry between
$\widehat{\mathbb{E}}$ and $\mathbb{E}^{\ast}$ (\cite{Bam} Lemma 2.7).
Moreover, $\omega^{\flat}$ can be seen as a bounded linear operator from
$\mathbb{E}$ to $\widehat{\mathbb{E}}^{\ast}$ and is in fact an isomorphism
(\cite{Bam} Lemma 2.8). Note that since $\widehat{\mathbb{E}}^{\ast}$
\textit{is reflexive}, this implies that $\hat{\mathbb{E}}$ \textit{is also
reflexive.} 

\begin{remark}\label{indnorm} \normalfont If ${||\;||'}$ is an equivalent norm of $||\;||$  on $\E$, then the corresponding $(||\;||')^*$ and $||\;||^*$ are also equivalent norm on $\E^*$ and so ${||\;||'}_\omega$ and $||\;||_\omega$ are equivalent norms on $\E$  and then the completion $\widehat{\E}$   depends only of the  Banach structure on $\E$  defined by equivalent the norms on $\E$
\end{remark}


\subsection{A Moser's Lemma and a Darboux Theorem on a Banach
manifold\label{DarbouxBanach}}


In this section, we will prove a generalization of Moser's Lemma (see for
instance \cite{Les}) to the Banach framework, and, as a corollary, we obtain
the result of \cite{Bam} for a weak symplectic Banach manifold. \newline

A \textit{weak symplectic form} on a Banach manifold modelled on a Banach
space $\mathbb{M}$ is a closed $2$-form $\omega$ on $M$, which is non-degenerate.

Then $\omega^{\flat}:TM\rightarrow T^{\ast}M$ is an injective bundle morphism.
The symplectic form $\omega$ is weak if $\omega^{\flat}$ is not surjective.
\textit{Assume that }$\mathbb{M}$\textit{ is reflexive}. According to the end of  section \ref{linearsymplectic} or
 \cite{Bam}, we denote by $\widehat{T_{x}M}$ the
Banach space which is the completion of $T_{x}M$ provided with the norm
$||\;||_{\omega_{x}}$ associated to some choice norm $||\;||$ on $T_xM$ and the Banach space
 $\widehat{T_{x}M}$ does not depends of this choice (see Remark \ref{indnorm}) .
Then $\omega_{x}$ can be extended to a
continuous bilinear map $\hat{\omega}_{x}$ on $T_{x}M\times\widehat{T_{x}M}$
and $\omega_{x}^{\flat}$ becomes an isomorphism from $T_{x}M$ to
$(\widehat{T_{x}M})^{\ast}$. We set
\[
\widehat{TM}=\bigcup_{x\in M}\widehat{T_{x}M}\;\text{ and }\;(\widehat
{TM})^{\ast}=\bigcup_{x\in M}(\widehat{T_{x}M})^{\ast}.
\]

\begin{theorem}
[Moser's Lemma]\label{Moserlemma} Let $\omega$ be a weak symplectic form on a
Banach manifold $M$ modelled on a reflexive Banach space $\mathbb{M}$. Assume
that we have the following properties:

\begin{description}
\item[(i)] There exists a neighbourhood $U$ of $x_{0}\in M$ such that
$\widehat{TM}_{|U}$ is a trivial Banach bundle whose typical fiber is the
Banach space $(\widehat{T_{x_{0}}M},||\;||_{\omega_{x_{0}}})$;

\item[(ii)] $\;\omega$ can be extended to a smooth field of continuous
bilinear forms on \newline$TM_{|U}\times\widehat{TM}_{|U}$.
\end{description}

\noindent Consider a family $\{\omega^{t}\}_{0\leq t\leq1}$ of closed
$2$-forms which smoothly depends on $t$ with the following properties:

\begin{description}
\item $\omega^{0}=\omega$ and, for all $0\leq t\leq1$, $\omega_{x_{0}}^{t}
=\omega_{x_{0}}$;

\item $\;\omega^{t}$ can be extended to a smooth field of continuous bilinear
forms on $TM_{|U}\times\widehat{TM}_{|U}$.
\end{description}

Then there exists a neighbourhood $V$ of $x_{0}$ such that each $\omega^{t}$
is a symplectic form on $V$, for all $0\leq t\leq1$, and there exists a family
$\{F_{t}\}_{0\leq t\leq1}$ of diffeomorphisms $F_{t}$ from a neighbourhood
$V_{0}\subset V$ of $x_{0}$ to a neighbourhood  $F_{t}(V_{0})\subset V$  of $x_0$ such
that $F_{0}=Id$ and $F_{t}^{\ast}\omega^{t}=\omega$, for all $0\leq t\leq1$.
\end{theorem}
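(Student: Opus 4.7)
I follow the Moser trick, seeking a smooth time-dependent vector field $X_{t}$ on a neighborhood of $x_{0}$ whose flow $F_{t}$ satisfies $F_{t}^{\ast}\omega^{t}=\omega$. Differentiating this identity and using Cartan's formula $\mathcal{L}_{X_{t}}\omega^{t}=d\iota_{X_{t}}\omega^{t}$ (each $\omega^{t}$ is closed), the problem reduces to solving
\begin{equation*}
\iota_{X_{t}}\omega^{t}=-\dot{\alpha}^{t},\qquad d\alpha^{t}=\omega^{t}-\omega,
\end{equation*}
for a smooth family $\alpha^{t}$ of $1$-forms, where $\dot{\alpha}^{t}:=\partial_{t}\alpha^{t}$.

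\textbf{Construction of $\alpha^{t}$.} Working in a chart centered at $x_{0}$ in which hypothesis (i) trivializes $\widehat{TM}_{|U}$ with typical fiber $\widehat{T_{x_{0}}M}$, the standard Poincar\'{e} homotopy operator along the radial contraction $s\mapsto sx$ applied to the closed family $\omega^{t}-\omega$ yields
\[
\alpha^{t}_{x}(Y)=\int_{0}^{1}s\,(\omega^{t}-\omega)_{sx}(x,Y)\,ds,
\]
so that $d\alpha^{t}=\omega^{t}-\omega$ and $d\dot{\alpha}^{t}=\partial_{t}\omega^{t}$. The corresponding formula
\[
\dot{\alpha}^{t}_{x}(Y)=\int_{0}^{1}s\,\partial_{t}\omega^{t}_{sx}(x,Y)\,ds
\]
makes sense for $Y\in\widehat{T_{sx}M}\simeq\widehat{T_{x_{0}}M}$ thanks to hypothesis (ii) applied to $\omega^{t}$; consequently $\dot{\alpha}^{t}$ is a smooth section of $(\widehat{TM})^{\ast}$ over $U$, and $\dot{\alpha}^{t}_{x_{0}}=0$ since the integrand vanishes when $x=0$.

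\textbf{Solving for $X_{t}$ and integrating.} By the end of Section \ref{linearsymplectic}, $\omega^{\flat}_{x_{0}}$ is an isomorphism $T_{x_{0}}M\to(\widehat{T_{x_{0}}M})^{\ast}$, and this is the value at $x_{0}$ of the continuous assignment $(x,t)\mapsto(\omega^{t}_{x})^{\flat}$ obtained from the trivialization in (i), the extension in (ii) and the smoothness in $t$. Openness of the set of Banach-space isomorphisms and compactness of $[0,1]$ produce a neighborhood $V\subset U$ of $x_{0}$ on which $(\omega^{t}_{x})^{\flat}:T_{x}M\to(\widehat{T_{x}M})^{\ast}$ is an isomorphism for every $t\in[0,1]$; in particular each $\omega^{t}$ is weakly symplectic on $V$. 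Setting
\[
X_{t}(x)=-\bigl((\omega^{t}_{x})^{\flat}\bigr)^{-1}\bigl(\dot{\alpha}^{t}_{x}\bigr)\in T_{x}M
\]
yields a smooth time-dependent vector field on $V$ with $X_{t}(x_{0})=0$. Standard Banach-space ODE theory then gives a neighborhood $V_{0}\subset V$ and a smooth family of diffeomorphisms $F_{t}:V_{0}\to F_{t}(V_{0})\subset V$ with $F_{0}=\mathrm{Id}$, the existence of the flow on the full interval $[0,1]$ being ensured by the fact that $x_{0}$ is a stationary point. By construction of $X_{t}$,
\[
\frac{d}{dt}\bigl(F_{t}^{\ast}\omega^{t}\bigr)=F_{t}^{\ast}\bigl(d\iota_{X_{t}}\omega^{t}+\partial_{t}\omega^{t}\bigr)=F_{t}^{\ast}\bigl(-d\dot{\alpha}^{t}+\partial_{t}\omega^{t}\bigr)=0,
\]
so $F_{t}^{\ast}\omega^{t}=\omega$ for all $t\in[0,1]$.

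\textbf{Main obstacle.} The delicate point is forcing $X_{t}$ to be a genuine vector field valued in $TM$ and not merely in the completion $\widehat{TM}$. In the weak case, $(\omega^{t})^{\flat}$ maps $TM$ only to a dense proper subspace of $T^{\ast}M$, so one cannot directly invert $\iota_{X_{t}}\omega^{t}=-\dot{\alpha}^{t}$ at the level of $T^{\ast}M$. Hypothesis (ii) is exactly what guarantees that $\dot{\alpha}^{t}$ actually lies in $(\widehat{TM})^{\ast}$, while hypothesis (i) together with the reflexivity-based isomorphism property recalled from Bambusi's argument makes $(\omega^{t})^{\flat}$ a bijection from $TM$ onto $(\widehat{TM})^{\ast}$. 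The careful verification of the joint smoothness in $(x,t)$ of the resulting inverse, and hence of $X_{t}$ as a smooth section of $TM$, is the technical heart of the argument.
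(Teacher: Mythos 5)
Your proposal is correct and follows essentially the same route as the paper: reduction to a chart where hypotheses (i)--(ii) trivialize $\widehat{TM}_{|U}$, the radial Poincar\'e homotopy to produce the primitive (your $\dot{\alpha}^{t}$ is exactly the paper's $\alpha^{t}$, a primitive of $\dot{\omega}^{t}$), openness of isomorphisms to invert $(\omega^{t}_{x})^{\flat}:T_{x}M\to(\widehat{T_{x}M})^{\ast}$ near $x_{0}$, and the standard Moser computation with the flow of $X_{t}=-((\omega^{t})^{\flat})^{-1}(\dot{\alpha}^{t})$. The only cosmetic difference is that you take a primitive of $\omega^{t}-\omega$ and then differentiate in $t$, while the paper takes a primitive of $\dot{\omega}^{t}$ directly; your closing discussion of why $X_{t}$ lands in $TM$ rather than $\widehat{TM}$ matches the paper's use of hypotheses (i) and (ii).
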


\begin{proof}
Let $(U,\phi)$ be a chart around $x_{0}$ such that $\phi(x_{0})=0\in
\mathbb{M}$ and consider that all the assumptions of the theorem are true on
$U$. According to the notations of section \ref{S_linearSymplectic}, let
$\Psi:\widehat{TM}_{|U}\rightarrow\phi(U)\times\widehat{\mathbb{M}}$ be a
trivialization. Without loss of generality, we may assume that $U$ is an open
neighbourhood of $0$ in $\mathbb{M}$ and $\widehat{TM}_{|U}=U\times
\widehat{\mathbb{M}}$. Therefore, $U\times\widehat{\mathbb{M}}$ is a trivial
Banach bundle modelled on the Banach space $(\widehat{\mathbb{M}%
},||\;||_{\omega_{0}})$.
Since $\omega$ can be extended to a non-degenerate skew symmetric bilinear
form (again denoted $\omega$) on $U\times(\mathbb{M}\times\widehat{\mathbb{M}%
})$ then $\omega^{\flat}$ is a Banach bundle isomorphism from $U\times
\mathbb{M}$ to $U\times\widehat{\mathbb{M}}^{\ast}$.\newline We set
$\dot{\omega}^{t}=\frac{d}{dt}\omega^{t}$. Since each $\omega^{t}$ is closed
for $0\leq t\leq1$, we have :
\[
d\dot{\omega}^{t}=\frac{d}{dt}(d\omega^{t})=0
\]
and so $\dot{\omega}^{t}$ is closed. After restricting U if necessary, from
the Poincar\'{e} Lemma, there exists a $1$-form $\alpha^{t}$ on $U$ such that
$\dot{\omega}^{t}=d\alpha^{t}$ for all $0\leq t\leq1$. In fact $\alpha_{t}$
can be given by
\[
\alpha_{x}^{t}=\int_{0}^{1}s.(\dot{\omega}^{t})_{sx}^{\flat}(x)ds
\]
Now, at $x=0$, $(\omega_{x_{0}}^{t})^{\flat}$ is an isomorphism from
$\mathbb{M}$ to $\widehat{\mathbb{M}}^{\ast}$. Since, for all $0\leq t\leq1$,
$x\mapsto(\omega_{x}^{t})^{\flat}$ is a smooth field from $U$ to
$\mathcal{L}(\mathbb{M},\widehat{\mathbb{M}}^{\ast})$, there exists a
neighbourhood $V$ of $0$ such that $(\omega_{x}^{t})^{\flat}$ is an
isomorphism from $\mathbb{M}$ to $\widehat{\mathbb{M}}^{\ast}$ for all $x\in
V$ and $0\leq t\leq1$. In particular, $\omega^{t}$ is a symplectic form on
$V$. Moreover $x\mapsto(\dot{\omega}_{x}^{t})^{\flat}$ is smooth and takes
values in $\mathcal{L}(\mathbb{M},\widehat{\mathbb{M}}^{\ast})$. Therefore
$x\mapsto\alpha_{x}^{t}$ can be extended to a smooth field on $V$ into
$\widehat{\mathbb{M}}^{\ast}$. We set $X_{x}^{t}:=-((\omega_{x}^{t})^{\flat
})^{-1}(\alpha_{x}^{t})$. It is a well defined time dependent vector field and
let $F_{t}$ be the flow generated by $X^{t}$ defined on some neighbourhood
$V_{0}\subset V$ of $0$. Note for all $t\in\lbrack0,1]$ since $\dot{\omega
}_{x_{0}}^{t}=0$ then $X_{x_{0}}^{t}=0$. Thus, for all $t\in\lbrack0,1]$,
$F_{t}(x_{0})=x_{0}$ . As classically, we have
\[
\frac{d}{dt}F_{t}^{\ast}\omega^{t}=F_{t}^{\ast}(L_{X^{t}}\omega^{t}%
)+F_{t}^{\ast}\frac{d}{dt}\omega^{t}=F_{t}^{\ast}(-d\alpha^{t}+\dot{\omega
}^{t})=0
\]
Thus $F_{t}^{\ast}\omega^{t}=\omega$.\newline${}\hfill$
\end{proof}

\begin{remark}
\label{strongsymplectic}  \normalfont If $\omega$ is a strong symplectic form on $M$, then
$\mathbb{M}$ is reflexive and $\omega^{\flat}$ is a bundle isomorphism from
$TM$ to $T^{\ast}M$. In particular, the norm $||\;||_{\omega_{x}}$ is
equivalent to any norm $||\;||$ on $T_{x}M$ which defines its Banach structure
and so all the assumptions (i) and (ii) of Theorem \ref{Moserlemma} are
locally always satisfied. Moreover, if we consider a family $\{\omega
^{t}\}_{0\leq t\leq1}$ as in Theorem \ref{Moserlemma} , the assumption
"$\;\omega^{t}$ can be extended to a smooth field of continuous bilinear forms
on $TM_{|U}\times\widehat{TM}_{|U}$ for all $0\leq t\leq1$" is always satisfied.

Therefore we get the same conclusion only with the assumptions $\omega
^{0}=\omega$ and $\omega_{x_{0}}^{t}=\omega_{x_{0}}$ for all $0\leq t\leq1$.
\newline
\end{remark}

\begin{example}\normalfont
We consider a symplectic form $\omega$ on $M$ such that the assumptions (i)
and (ii) of Theorem \ref{Moserlemma} are satisfied on some neighbourhood $U$
of $x_{0}$. We consider the family $\omega_{x}^{s,t}=e^{st}\omega_{x}$ where
$\left(  s,t\right)  \in\lbrack0,1]^{2}$. Then for $s=0$, we have
$\omega_{x_{0}}^{s,t}=\omega_{x_{0}}$ for all $t\in\lbrack0,1]$ and
$\omega^{0}=\omega$ for $t=0$. But $(\omega_{x_{0}}^{s,t})^{\flat}\equiv
\omega_{x_{0}}^{\flat}$ belongs to $\mathcal{L}(\mathbb{M},\widehat
{\mathbb{M}}^{\ast})$. Since the map $%
\begin{array}
[c]{ccc}%
\lbrack0,1]^{2}\times U & \longrightarrow & \mathcal{L}(\mathbb{M}%
,\widehat{\mathbb{M}}^{\ast})\\
(s,t,x) & \mapsto & (\omega_{x}^{s,t})^{\flat}%
\end{array}
$is smooth, there exists an open neighbourhood $\Sigma\times V\subset
\lbrack0,1]^{2}\times U$ of $\{0\}\times\lbrack0,1]\times\{x_{0}\}$ such that
$(\omega_{x}^{s,t})^{\flat}$ belongs to $\operatorname*{GL}(\mathbb{M}%
,\widehat{\mathbb{M}}^{\ast})$. Therefore the assumptions of Theorem
\ref{Moserlemma} are fulfilled for each $s$ fixed such that $(s,t)\in\Sigma$
on an open neighbourhood $V_{s}$ of $x_{0}$. This implies that, for any fixed
$s$ such that $\{s\}\times\lbrack0,1]\subset\Sigma$, there exists a family
$\{F_{s,t}\}_{0\leq t\leq1}$ of diffeomorphisms $F_{s,t}$ from a neighbourhood
$W_{s}$ of $x_{0}$ to $F_{s,t}(W_{s})\subset V_{s}$ is a neighouhood $x_{0}$
and such that $F_{0}=Id$ and $F_{s,t}^{\ast}\omega^{s,t}=\omega$ for all
$0\leq t\leq1$. \newline
\end{example}

Now as a Corollary of Theorem \ref{Moserlemma}, we obtain the Bambusi's
version of Darboux Theorem ( \cite{Bam} Theorem $2.1)$:\newline

\begin{theorem}
[Local Darboux Theorem]\label{localDarboux} Let $\omega$ be a weak symplectic
form on a Banach manifold $M$ modelled on a reflexive Banach space
$\mathbb{M}$. Assume that the assumptions (i) and (ii) of Theorem
\ref{Moserlemma} are satisfied. Then there exists a chart $(V,F)$ around
$x_{0}$ such that $F^{\ast}\omega_{0}=\omega$ where $\omega_{0}$ is the
constant form on $F(V)$ defined by $(F^{-1})^{\ast}\omega_{x_{0}}$.
\end{theorem}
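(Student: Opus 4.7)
The plan is to reduce the statement to a direct application of the Moser Lemma (Theorem \ref{Moserlemma}) via the standard interpolation between $\omega$ and a suitably chosen constant reference form.

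First, I would fix an arbitrary chart $(U,\phi)$ around $x_{0}$ with $\phi(x_{0})=0$, and work within the identification $U\simeq\phi(U)\subset\mathbb{M}$, so that $TM_{|U}$ becomes the trivial bundle $U\times\mathbb{M}$. Combined with the trivialization of $\widehat{TM}_{|U}$ provided by hypothesis (i), the bilinear extension $\widehat{\omega}_{x_{0}}$ on $T_{x_{0}}M\times\widehat{T_{x_{0}}M}$ transports to a smooth \emph{constant} field of continuous bilinear forms $\omega_{0}$ on $TM_{|U}\times\widehat{TM}_{|U}$. This $\omega_{0}$ is a closed $2$-form whose value at each base point equals $\omega_{x_{0}}$ under the trivializations.

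Next, I would consider the linear interpolation $\omega^{t}:=(1-t)\omega+t\omega_{0}$ for $t\in[0,1]$ and verify that it satisfies every assumption of Theorem \ref{Moserlemma}: closedness of each $\omega^{t}$ is immediate from closedness of $\omega$ and $\omega_{0}$; smoothness in $t$ is automatic; $\omega^{0}=\omega$; and $\omega^{t}_{x_{0}}=\omega_{x_{0}}$ for every $t$, since the two endpoint forms agree at $x_{0}$. The extension of $\omega^{t}$ to a smooth field on $TM_{|U}\times\widehat{TM}_{|U}$ follows linearly from the corresponding property of $\omega$ (hypothesis (ii)) and of $\omega_{0}$ (by construction). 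The Moser Lemma then produces a neighbourhood $V_{0}$ of $x_{0}$ and diffeomorphisms $F_{t}:V_{0}\to F_{t}(V_{0})$ with $F_{0}=\mathrm{Id}$ and $F_{t}^{\ast}\omega^{t}=\omega$; at $t=1$ this yields $F_{1}^{\ast}\omega_{0}=\omega$.

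Finally, I would define the desired chart on $M$ by $F:=F_{1}\circ\phi$ on $V:=\phi^{-1}(V_{0})$; this is a diffeomorphism onto an open subset of $\mathbb{M}$, and $F^{\ast}\omega_{0}=\phi^{\ast}F_{1}^{\ast}\omega_{0}=\omega$ on $V$. Evaluating this identity at $x_{0}$ forces $\omega_{0}=(dF_{x_{0}}^{-1})^{\ast}\omega_{x_{0}}$ at $F(x_{0})$, so $\omega_{0}$ is precisely the constant form on $F(V)$ determined by $(F^{-1})^{\ast}\omega_{x_{0}}$, as required. The only non-routine ingredient is producing the constant reference form $\omega_{0}$ together with its smooth extension to $\widehat{TM}_{|U}$; this is exactly where hypothesis (i) is indispensable, since without a local trivialization of $\widehat{TM}$ one cannot transport the single bilinear form $\widehat{\omega}_{x_{0}}$ consistently across the base.
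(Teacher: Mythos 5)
Your proposal is correct and follows essentially the same route as the paper: pass to a chart, form the constant reference form $\omega_{0}$ from $\omega_{x_{0}}$, linearly interpolate between $\omega$ and $\omega_{0}$, and apply Theorem \ref{Moserlemma} at $t=1$. The only (immaterial) difference is the orientation of the interpolation --- the paper writes $\Omega^{t}=\tilde{\omega}_{0}+t(\tilde{\omega}-\tilde{\omega}_{0})$, i.e.\ your family with $t\mapsto 1-t$ --- and your version in fact matches the normalization $\omega^{0}=\omega$ of the Moser Lemma more literally, while also spelling out the verification of the hypotheses and the final identification of $\omega_{0}$ with $(F^{-1})^{\ast}\omega_{x_{0}}$ that the paper leaves to the reader.
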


\begin{proof}
Take a local chart $(U,\phi)$ around $x$ such that $\phi(x)=0$. On
$\phi(U)\subset\mathbb{M}$, we consider the symplectic form $\tilde{\omega
}=(\phi^{-1})^{\ast}\omega$. We set $\tilde{\omega}_{0}$ the constant $2$-form
on $\phi(U)$ given by $\tilde{\omega}_{0}$ and we consider the family
$\Omega^{t}=\tilde{\omega}_{0}+t(\tilde{\omega}-\tilde{\omega}_{0})$. Then the
reader can easily verify that $\omega^{t}=\phi^{\ast}\Omega^{t}$ satisfies the
assumptions of Theorem \ref{Moserlemma}.\newline${}\hfill$
\end{proof}

\begin{definition}
The chart $(V,F)$ in Theorem \ref{localDarboux} will be called a Darboux chart
around $x_{0}$.
\end{definition}

\begin{remark}
\label{comparebambusi}\normalfont
The assumptions of Theorem 2.1 in \cite{Bam} are  formulated in another way.
In fact, in this
Theorem 2.1,  the assumptions on all the norms $||\;||_{\omega_{x}}$  on the typical fiber 
$\widehat{\mathbb{M}}$ is a consequence of
the assumptions of Theorem \ref{Moserlemma}. Indeed, for a fixed $x_{0}$ in
$M$, there exists a trivialization $T\phi:TM_{|U}\rightarrow U\times
\mathbb{M}$. Therefore, there exists an isomorphism $\Psi_{x}$ of $\mathbb{M}$
such that $T_{x}\phi=T_{x_{0}}\phi\circ\Psi_{x}$ with $\Psi_{x_{0}}=Id$. Now
we have a trivialization $T\phi^{\ast}:U\times\mathbb{M}^{\ast}\rightarrow
T^{\ast}M_{|U}$ and we have $T\phi_{x}^{\ast}=\Psi_{x}^{\ast}\circ T_{x_{0}%
}\phi^{\ast}$; then $\Psi_{x}^{\ast}$ is an isomorphism of $\mathbb{M}^{\ast}%
$. After shrinking $U$ if necessary, there exists $K>0$ such that $\dfrac
{1}{K}\leq||\Psi_{x}||^{\mathrm{op}}\leq K$. It follows that $\alpha\mapsto||\Psi
_{x}(\alpha)||^{\ast}$ is a norm on $\mathbb{M}^{\ast}$ equivalent to
$||\;||^{\ast}$ on $\mathbb{M}^{\ast}$.

\end{remark}



\bigskip Weinstein presents an example of a weak symplectic form $\omega$ on a
neighbourhood of $0$ of a Hilbert $\mathbb{H}$ space for which the Darboux
Theorem is not true. The essential reason is that the operator $\omega^{\flat
}$ is an isomorphism from $T_{x}U$ onto $T_{x}^{\ast}U$ on $U\setminus\{0\}$
but $\omega_{0}^{\flat}$ is not surjective (for more details, see Example
\ref{nochartDarboux}). In this way, we have the following Corollary:

\begin{corollary}
\label{equivDarboux} Let $\omega$ be a weak symplectic form on a Banach
manifold modelled on a reflexive Banach space $\mathbb{M}$. Then there exists
a Darboux chart $(V,F)$ around $x_{0}$ if and only

\begin{description}
\item[(i)] there exists a chart $(U,\phi)$ around $x_{0}$ such that
$\widehat{TM}_{|U}$ is a trivial Banach bundle whose typical fiber is the
completion $\widehat{\mathbb{M}}$ of the normed space $(\mathbb{M}%
,||\;||_{(\phi^{-1})^{\ast}\omega_{x_{0}}})$ and $T\phi$ can be extended to a
trivialization $\widehat{T\phi}$~of $\widehat{TM}_{|U}$ on $U\times
\widehat{\mathbb{M}}$;

\item[(ii)] $\;\omega$ can be extended to a smooth field of continuous
bilinear forms on $TM_{|U}\times\widehat{TM}_{|U}$.
\end{description}
\end{corollary}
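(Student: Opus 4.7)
The plan is to prove the two implications separately, with the sufficiency being essentially a restatement of the Local Darboux Theorem and the necessity being a more-or-less linear-algebraic unpacking of what a Darboux chart means.

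For sufficiency, assume (i) and (ii) hold for some chart $(U,\phi)$ around $x_{0}$. Then the hypotheses (i) and (ii) of Theorem \ref{Moserlemma} (the Moser Lemma) are exactly the assumptions of Theorem \ref{localDarboux}. I would therefore invoke Theorem \ref{localDarboux} directly to produce a Darboux chart $(V,F)$ around $x_{0}$ with $F^{\ast}\omega_{0}=\omega$, and this direction is finished.

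For necessity, suppose a Darboux chart $(V,F)$ around $x_{0}$ is given. The plan is to show that the chart $(U,\phi) = (V,F)$ itself satisfies (i) and (ii). The key observation is that $F^{\ast}\omega_{0}=\omega$ with $\omega_{0}$ a \emph{constant} $2$-form on $F(V)\subset \M$; equivalently, reading $\omega$ in the trivialization $TF:TM_{|V}\to V\times \M$, the associated morphism $\omega^{\flat}$ becomes the constant map $x\mapsto \omega_{0}^{\flat}$ into $\mathcal{L}(\M,\widehat{\M}^{\ast})$. Consequently, for every $x\in V$, the norm $||\cdot||_{\omega_{x}}$ on the typical fibre $\M$ coincides with $||\cdot||_{(F^{-1})^{\ast}\omega_{x_{0}}}$, so the completions $(\widehat{T_{x}M},||\cdot||_{\omega_{x}})$ are all canonically the same Banach space $\widehat{\M}$. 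This allows $TF$ to be extended fibrewise to a bundle trivialization $\widehat{TF}:\widehat{TM}_{|V}\to V\times\widehat{\M}$, which is condition (i).

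Condition (ii) is then almost automatic. In the chart $(V,F)$ the form $\omega$ reads as the constant bilinear map $\omega_{0}$, and by the discussion just after Remark \ref{indnorm} the operator $\omega_{0}^{\flat}$ extends to an isomorphism from $\M$ to $\widehat{\M}^{\ast}$; equivalently, $\omega_{0}$ extends to a continuous bilinear form on $\M\times\widehat{\M}$. Being constant in the base variable, this extension is trivially smooth, giving (ii). The only minor subtlety I foresee is being careful that the identifications $\widehat{T_{x}M}\cong\widehat{\M}$ are not just fibrewise isometries but assemble into a genuine Banach bundle trivialization; this, however, follows from the fact that the underlying identification is simply $TF$ itself (already a smooth trivialization of $TM_{|V}$), combined with the constancy of $\omega^{\flat}$ in these coordinates, so no Moser-type flow is needed for this direction.
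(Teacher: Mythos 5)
Your proposal is correct and follows essentially the same route as the paper: the sufficiency direction is, as you say, a direct invocation of Theorem \ref{localDarboux}, and the necessity direction rests on the same observation that $\omega^{\flat}=T^{\ast}F\circ\omega_{0}^{\flat}\circ TF$ with $\omega_{0}$ constant, so that $TF$ extends fibrewise to a trivialization $\widehat{TF}$ and $\omega$ extends to the smooth (indeed constant-in-$x$) field $T^{\ast}F\circ\omega_{0}\circ\widehat{TF}$. The only cosmetic difference is that the paper delegates condition (i) to Proposition 2.6 of \cite{Bam}, whereas you derive it directly from the constancy of the norms $||\cdot||_{\omega_{x}}$ in the Darboux trivialization; both are fine.
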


\begin{proof}
If we have a Darboux chart $(V,F)$ around $x_{0}$, then from Proposition 2.6
of \cite{Bam}, (i) is true. Since we have $\omega_{x}(X,Y)=\omega_{0}%
(T_{x}F(X),T_{x}F(Y))$ we have $\omega^{\flat}=T^{\ast}F\circ\omega_{0}%
^{\flat}\circ TF$.\newline Now, the inclusion $\iota:\mathbb{M}\rightarrow
\widehat{\mathbb{M}}$ is continuous with a dense range. Therefore we have an
injective bundle morphism, again denoted $\iota$, from $TM_{|U}$ to
$\widehat{TM}_{|U}$ whose range is dense. Since we can extend $TF$ to a
trivialization $\widehat{TF}$ of $\widehat{TM}_{|U}$ to $U\times
\widehat{\mathbb{M}}$. We can define $\widehat{\omega}^{\flat}=T^{\ast}%
F\circ\omega_{0}\circ\widehat{TF}$ which gives rise to a smooth field
$x\mapsto\widehat{\omega}_{x}$ of $2$-forms on $TM_{|U}\times\widehat{TM}%
_{|U}$. \newline Conversely, assume that there exists a chart $(U,\phi)$
around $x_{0}$ such that $T\phi$ can be extended to a $\widehat{T\phi}$ to
$\widehat{TM}_{|U}$ on $U\times\widehat{\mathbb{M}}$ where $\widehat
{\mathbb{M}}$ is the completion of the normed space $(\mathbb{M}%
,||\;||_{(\phi^{-1})^{\ast}\omega_{x_{0}}})$. Then the condition (i) of
Theorem \ref{Moserlemma} is satisfied. Since condition (ii) is assumed, the
result is a consequence of Theorem \ref{localDarboux}.\newline${}\hfill$
\end{proof}

Finally from Theorem \ref{localDarboux}, we also obtain the following global
version of a Darboux Theorem:\newline

\begin{theorem}
[Global Darboux Theorem]\label{globalDarboux} Let $\omega$ be a weak
symplectic form on a Banach manifold modelled on a reflexive Banach space
$\mathbb{M}$. Assume that we have the following assumptions:

\begin{description}
\item[(i)] $\widehat{TM}\rightarrow M$ is a Banach bundle whose typical fiber
is $\widehat{\mathbb{M}}$;

\item[(ii)] $\;\omega$ can be extended to a smooth field of continuous
bilinear forms on $TM\times\widehat{TM}_{|U}$.
\end{description}

Then for any $x_{0}\in M$, there exists a a Darboux chart $(V, F)$ around
$x_{0}$. \newline
\end{theorem}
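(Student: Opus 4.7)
The plan is to reduce the global statement to the local Darboux theorem (Theorem \ref{localDarboux}) applied pointwise. Fix $x_0\in M$; I will verify that the two hypotheses of Theorem \ref{Moserlemma}, which are precisely the assumptions of Theorem \ref{localDarboux}, hold on a suitable neighbourhood $U$ of $x_0$, and then invoke Theorem \ref{localDarboux} directly to produce the required Darboux chart around $x_0$.

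For hypothesis (i), I use the global assumption that $\widehat{TM}\to M$ is a Banach bundle with typical fiber $\widehat{\mathbb{M}}$. By definition of a Banach bundle, there exists an open neighbourhood $U$ of $x_0$ together with a local trivialization $\Psi:\widehat{TM}|_U\to U\times\widehat{\mathbb{M}}$. The restriction of $\Psi$ to the fiber over $x_0$ is a Banach space isomorphism $\Psi_{x_0}:\widehat{T_{x_0}M}\to\widehat{\mathbb{M}}$. Post-composing $\Psi$ fiberwise with $(\mathrm{Id}_U\times\Psi_{x_0}^{-1})$ yields a new trivialization $\widehat{TM}|_U\to U\times\widehat{T_{x_0}M}$ whose typical fiber is $(\widehat{T_{x_0}M},||\;||_{\omega_{x_0}})$. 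This is exactly hypothesis (i) of Theorem \ref{Moserlemma}, and by Remark \ref{indnorm} the choice of norm on $T_{x_0}M$ used to define the completion is immaterial.

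Hypothesis (ii) of Theorem \ref{Moserlemma} is nothing but the restriction to $TM|_U\times\widehat{TM}|_U$ of the global hypothesis (ii) of the present theorem. Hence both hypotheses of Theorem \ref{Moserlemma} are satisfied on $U$, so Theorem \ref{localDarboux} produces a chart $(V,F)$ around $x_0$ with $V\subset U$ such that $F^{\ast}\omega_0=\omega$, where $\omega_0$ is the constant form on $F(V)$ associated with $\omega_{x_0}$. Since $x_0\in M$ was arbitrary, every point of $M$ admits a Darboux chart.

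I expect no substantive obstacle: the entire analytic content is already packaged in Theorem \ref{Moserlemma} (the Moser trick together with the Poincar\'e lemma and the reflexivity argument yielding $(\omega^t_x)^\flat\in\mathrm{GL}(\mathbb{M},\widehat{\mathbb{M}}^{\ast})$ near $x_0$), and the global Banach bundle hypothesis on $\widehat{TM}$ serves only to ensure, automatically and at every point, the local trivialization with the appropriate typical fiber demanded by the local theorem. In this sense, the global statement differs from the local one essentially in that the pointwise data are promoted to globally coherent data on $M$.
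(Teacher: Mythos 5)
Your proposal is correct and follows exactly the route the paper intends: the paper derives Theorem \ref{globalDarboux} directly from Theorem \ref{localDarboux}, the global bundle hypothesis on $\widehat{TM}$ supplying the local trivialization required by assumption (i) of Theorem \ref{Moserlemma} at each point, and the global extension of $\omega$ restricting to assumption (ii). Your extra care in adjusting the trivialization so that the typical fiber is $(\widehat{T_{x_0}M},||\;||_{\omega_{x_0}})$ is a sound and welcome detail, but the argument is otherwise the same.
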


Since for strong symplectic Banach manifolds the assumptions (i) and (ii) are
trivially satisfied, we obtain the classical Darboux Theorem for strong
symplectic Banach manifold as a corollary (\textit{cf.} \cite{Ma1} or
\cite{Wei}).


\section{Symplectic form on direct limit of ascending sequence of Banach
manifolds}

\subsection{ Symplectic form on direct limit of an ascending sequence of reflexive Banach spaces}
\label{S_linearSymplectic}${}$\\
Let $\mathbb{E}=\underrightarrow{\lim}\mathbb{E}_{n}$ be a direct limit of an
ascending sequence of reflexive Banach spaces $\left(  \mathbb{E}_{n}\right)
_{n\in\mathbb{N}^{\ast}}$ (\textit{cf.} Appendix
\ref{AscendingSequencesOfSupplementedBanachSpaces}). Let $\left\Vert
\;\right\Vert _{n}$ be a chosen norm on the Banach space $\mathbb{E}_{n}$ such
that
\[
||\;||_{1}\leq\cdots\leq||\;||_{n}\leq\cdots
\]
We consider a sequence $\left(  \omega_{n}\right)  _{n\in\mathbb{N}^{\ast}}$
of symplectic forms (\textit{i.e.} non-degenerate $2$-forms) on $\mathbb{E}%
_{n}$ such that $\omega_{n}$ is the restriction of $\omega_{n+1}$ to
$\mathbb{E}_{n}$ and let $\omega_{n}^{\flat}:\mathbb{E}_{n}\rightarrow
\mathbb{E}_{n}^{\ast}$ be the associated bounded linear operator. Following
the end of section \ref{linearsymplectic}, we consider the norm $||u||_{\omega
_{n}}=||\omega_{n}^{\flat}(u)||_{n}^{\ast}$ where $||\;||_{n}^{\ast}$ is the
canonical norm on $\mathbb{E}_{n}^{\ast}$ associated to $||\;||_{n}$.  We have seen that  
the inclusion of the Banach space
$(\mathbb{E},||\;||_{n})$ in the normed space  $(\mathbb{E}_{n},||\;||_{\omega_{n}})$ is
continuous and we denote by  $\widehat{\mathbb{E}}_{n}$ the Banach space which is the
completion of $(\mathbb{E}_{n},||\;||_{\omega_{n}})$. Recall that from Remark \ref{indnorm}, 
the Banach space  $\widehat{\mathbb{E}}_{n}$ does not depends of the choice of the norm $||\;||_n$ on $\mathbb{E}_n$. 
Again, according to the end of section
\ref{linearsymplectic}, $\omega_{n}^{\flat}$ can be extended to an isometry
between $\hat{\mathbb{E}}_{n}$ and $\mathbb{E}_{n}^{\ast}$. Moreover,
$\omega_{n}^{\flat}$ is an isomorphism from $\mathbb{E}_{n}$ to $\widehat
{\mathbb{E}}_{n}^{\ast}$.\newline

\begin{lemma}
\label{wideEnascending} The family $\widehat{\mathbb{E}}_{n}^{*}$ is an
ascending sequence of Banach spaces and so $\widehat{\mathbb{E}}%
^{*}=\underrightarrow{\lim}\widehat{\mathbb{E}}_{n}^{*}$ is well defined.
\end{lemma}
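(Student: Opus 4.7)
The plan is to transport the given ascending structure on $(\mathbb{E}_n)_{n \in \mathbb{N}^*}$ to the sequence $(\widehat{\mathbb{E}}_n^*)_{n \in \mathbb{N}^*}$ by means of the isomorphisms $\omega_n^\flat: \mathbb{E}_n \to \widehat{\mathbb{E}}_n^*$ recalled immediately before the lemma. Writing $i_n: \mathbb{E}_n \hookrightarrow \mathbb{E}_{n+1}$ for the continuous inclusion coming from the given ascending sequence, I would set
\[
j_n := \omega_{n+1}^\flat \circ i_n \circ (\omega_n^\flat)^{-1}: \widehat{\mathbb{E}}_n^* \longrightarrow \widehat{\mathbb{E}}_{n+1}^*,
\]
and then show that the family $(j_n)$ realises $(\widehat{\mathbb{E}}_n^*)$ as an ascending sequence of Banach spaces in the sense of Appendix \ref{AscendingSequencesOfSupplementedBanachSpaces}.

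First I would check that each $j_n$ is a continuous linear injection: continuity is automatic as a composition of three bounded linear maps, while injectivity follows from the injectivity of $i_n$ together with the bijectivity of $\omega_n^\flat$ and $\omega_{n+1}^\flat$. Identifying $\widehat{\mathbb{E}}_n^*$ with its image $j_n(\widehat{\mathbb{E}}_n^*) \subset \widehat{\mathbb{E}}_{n+1}^*$ then produces the required inclusion $\widehat{\mathbb{E}}_n^* \subset \widehat{\mathbb{E}}_{n+1}^*$. By Remark \ref{indnorm} neither $\widehat{\mathbb{E}}_n$ nor its dual depends on the particular choice of norms $\|\cdot\|_n$, so the inclusion is canonical once the symplectic forms $\omega_n$ are fixed.

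Next I would verify the coherence relation
\[
j_{n+p-1} \circ \cdots \circ j_n = \omega_{n+p}^\flat \circ (i_{n+p-1} \circ \cdots \circ i_n) \circ (\omega_n^\flat)^{-1},
\]
which is an immediate telescoping computation: every internal factor $(\omega_k^\flat)^{-1} \circ \omega_k^\flat$ collapses to the identity. This guarantees that the iterated inclusions of $(\widehat{\mathbb{E}}_n^*, j_n)$ match the iterated inclusions of $(\mathbb{E}_n, i_n)$ transported via the $\omega_k^\flat$, so the data genuinely form an ascending sequence of Banach spaces. Invoking the standard construction of direct limits for such sequences (recalled in Appendix \ref{AscendingSequencesOfSupplementedBanachSpaces}) then yields the desired $\widehat{\mathbb{E}}^* := \underrightarrow{\lim}\, \widehat{\mathbb{E}}_n^*$.

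I do not expect any substantive obstacle: the lemma is essentially formal once the pointwise isomorphisms $\omega_n^\flat$ are available, and the only thing to be careful about is that the two natural structures -- one coming from $i_n$ on the sequence $(\mathbb{E}_n)$ and one coming from $j_n$ on the sequence $(\widehat{\mathbb{E}}_n^*)$ -- are intertwined by the $\omega_k^\flat$, which is exactly what the coherence relation above asserts.
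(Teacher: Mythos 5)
Your proposal is correct and follows essentially the same route as the paper: the paper also identifies $\widehat{\mathbb{E}}_{n}^{\ast}$ with the image $\omega_{n+1}^{\flat}(\mathbb{E}_{n})\subset\widehat{\mathbb{E}}_{n+1}^{\ast}$, which is exactly your bonding map $j_{n}=\omega_{n+1}^{\flat}\circ i_{n}\circ(\omega_{n}^{\flat})^{-1}$ written out. You merely make explicit the telescoping coherence check that the paper leaves implicit.
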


\begin{proof}
In fact we only have to show that $\widehat{\mathbb{E}}_{n}^{\ast}$ is
contained in $\widehat{\mathbb{E}}_{n+1}^{\ast}$ and the inclusion is
continuous. Since $\omega_{n}^{\flat}$ is an isomorphism from $\mathbb{E}_{n}$
to $\hat{\mathbb{E}}_{n}^{\ast}$ and the restriction of $\omega_{n+1}$ to
$\mathbb{E}_{n}$ is $\omega_{n}$, it follows that the restriction of
$\omega_{n+1}$ to $\mathbb{E}_{n}$ is an isomorphism from $\mathbb{E}_{n}$
onto a Banach subspace of $\widehat{\mathbb{E}}_{n+1}^{\ast}$ which will be
identified with $\widehat{\mathbb{E}}_{n}^{\ast}$ and so the inclusion of
$\widehat{\mathbb{E}}_{n}^{\ast}$ in $\widehat{\mathbb{E}}_{n+1}^{\ast}$ is
continuous which ends the proof.\newline
\end{proof}

\bigskip

Recall that $\mathbb{E}=\underrightarrow{\lim}\mathbb{E}_{n}$ is a convenient
space (see \cite{CaPe} Proposition 20). Given a bounded skew symmetric
bilinear form $\omega$ on $\mathbb{E}$, we again denote $\omega^{\flat}$ the
bounded linear operator from $\mathbb{E}$ to its dual $\mathbb{E}^{\ast}$
defined as usually by $\omega^{\flat}(u)=\omega(u,\;)$. Let $\iota_{n}$ be the
natural inclusion of $\mathbb{E}_{n}$ in $\mathbb{E}$ and set $\omega
_{n}=\iota_{n}^{\ast}\omega$. If $u$ and $v$ belong to $\mathbb{E}$ then $u$
and $v$ belong to some $\mathbb{E}_{n}$ and so we have $\omega(u,v)=\omega
_{n}(u,v)$.

\textit{A symplectic form on }$\mathbb{E}$ is a bounded skew symmetric
bilinear form $\omega$ on $\mathbb{E}$ which is non degenerate where
$\omega^{\flat}$ is injective.\newline

\begin{proposition}
\label{omegan} With the previous notations, we have

\begin{enumerate}
\item[1.] $\omega_{n}=\iota_{n}^{\ast}\omega$ is a weak symplectic form on
$\mathbb{E}_{n}$ and $\omega_{n}^{\flat}=\iota_{n}^{\ast}\circ\omega^{\flat
}\circ\iota_{n}$. Moreover, the sequence $\left(  \omega_{n}\right)
_{n\in\mathbb{N}^{\ast}}$ is coherent, and if $u=\underrightarrow{\lim}u_{n}$
and $v=\underrightarrow{\lim}v_{n}$, then we have $\omega
(u,v)=\underrightarrow{\lim}\omega_{n}(u_{n},v_{n})$. Moreover, we have
$\omega(u,v)=\omega^{\flat}(u)(v)$ where $\omega^{\flat}=\underrightarrow
{\lim}(\omega_{n}^{\flat})$.

\item[2.] Conversely, let $\left(  \omega_{n}\right)  _{n\in\mathbb{N}^{\ast}%
}$ be a coherent sequence of symplectic forms. If $u=\underrightarrow{\lim
}u_{n}$ and $v=\underrightarrow{\lim}v_{n}$, then $\omega
(u,v)=\underrightarrow{\lim}\omega_{n}(u_{n},v_{n})$ is well defined and is a
symplectic form on $\mathbb{E}$.
\end{enumerate}
\end{proposition}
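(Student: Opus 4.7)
I split the proof along the two assertions, handling the converse direction first since it is the more straightforward.

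For part 2, starting from a coherent sequence $(\omega_n)$, any pair $u, v \in \mathbb{E}$ lies in a common $\mathbb{E}_{n_0}$, so I set $\omega(u,v) := \omega_{n_0}(u,v)$. Coherence gives $\omega_m(u,v) = \omega_{n_0}(u,v)$ for every $m \geq n_0$, so this value is independent of the step $n_0$, and writing $u = \underrightarrow{\lim} u_n$, $v = \underrightarrow{\lim} v_n$ this becomes precisely $\omega(u,v) = \underrightarrow{\lim}\omega_n(u_n,v_n)$. Skew-symmetry and bilinearity are inherited stepwise. Boundedness in the convenient sense reduces to the continuity of $\omega_n$ on each stratum $\mathbb{E}_n \times \mathbb{E}_n$, which holds by hypothesis. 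Finally, if $\omega(u,\cdot) \equiv 0$ on $\mathbb{E}$ and $u \in \mathbb{E}_n$, then $\omega_n(u,\cdot) \equiv 0$ on $\mathbb{E}_n$, and non-degeneracy of $\omega_n$ forces $u = 0$.

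For part 1, I pull back $\omega$ by each inclusion, setting $\omega_n := \iota_n^\ast \omega$. Coherence of $(\omega_n)$ follows from $\iota_{n+1}|_{\mathbb{E}_n} = \iota_n$: for $u, v \in \mathbb{E}_n$ one has $\omega_{n+1}(u,v) = \omega(u,v) = \omega_n(u,v)$. The identity $\omega_n^\flat = \iota_n^\ast \circ \omega^\flat \circ \iota_n$ drops out of unfolding the definition via $\omega_n^\flat(u)(v) = \omega(\iota_n u, \iota_n v)$, and the limit formula $\omega(u,v) = \underrightarrow{\lim}\omega_n(u_n,v_n)$ holds because any two elements of $\mathbb{E}$ eventually live in a common $\mathbb{E}_n$. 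Applying Lemma \ref{wideEnascending}, the individual isomorphisms $\omega_n^\flat : \mathbb{E}_n \to \widehat{\mathbb{E}}_n^\ast$ are compatible with the transition maps and thus assemble into a bounded linear map between the direct limits, yielding $\omega^\flat = \underrightarrow{\lim}(\omega_n^\flat) : \mathbb{E} \to \widehat{\mathbb{E}}^\ast$.

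The most delicate point I anticipate is the weak-symplectic (non-degeneracy) property of each $\omega_n$ in part 1. Non-degeneracy of $\omega$ on $\mathbb{E}$ only yields, for a given $u \in \mathbb{E}_n \setminus \{0\}$, a witness $v \in \mathbb{E}_m$ that may genuinely live in a strictly larger stratum, so an additional argument is required to produce a witness inside $\mathbb{E}_n$ itself. To carry this through I plan to exploit the supplemented ascending structure recalled in Appendix \ref{AscendingSequencesOfSupplementedBanachSpaces}: the continuous projection attached to a topological decomposition $\mathbb{E}_m = \mathbb{E}_n \oplus \mathbb{S}_n^m$ together with the coherence of $(\omega_k)_{k\leq m}$ allows one to split $\omega(u,v)$ into an $\mathbb{E}_n$-part and an $\mathbb{S}_n^m$-part and argue by induction on $m - n$ that $\omega_n(u,\cdot)$ cannot vanish identically on $\mathbb{E}_n$. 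Once injectivity of $\omega_n^\flat$ is secured, all remaining assertions of part 1 follow from the identities already established.
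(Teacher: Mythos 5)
Your handling of Part 2, and of everything in Part 1 except the non-degeneracy of $\omega_{n}$, coincides with the paper's argument: evaluate on a common stratum, use coherence for well-definedness, get boundedness from the fact that a bounded subset of $\mathbb{E}$ lies in some $\mathbb{E}_{n}$, and unfold the factorization $\omega_{n}^{\flat}=\iota_{n}^{\ast}\circ\omega^{\flat}\circ\iota_{n}$.

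The genuine gap is precisely the point you flag as delicate, and the repair you sketch cannot be carried out. The assertion that non-degeneracy of $\omega$ on $\mathbb{E}$ forces non-degeneracy of $\iota_{n}^{\ast}\omega$ on $\mathbb{E}_{n}$ is false at this level of generality: take $\mathbb{E}_{1}=\mathbb{R}\times\{0\}$ and $\mathbb{E}_{n}=\mathbb{R}^{2}$ for all $n\geq 2$ (a legitimate ascending sequence of supplemented reflexive Banach spaces with direct limit $\mathbb{R}^{2}$), and let $\omega$ be the standard area form; then $\omega_{1}=\iota_{1}^{\ast}\omega=0$. Your proposed induction on $m-n$ through the splitting $\mathbb{E}_{m}=\mathbb{E}_{n}\oplus\mathbb{S}_{n}^{m}$ only shows that a $u\in\mathbb{E}_{n}$ with $\omega_{n}(u,\cdot)\equiv 0$ admits a witness $v$ whose $\mathbb{S}_{n}^{m}$-component pairs nontrivially with $u$; nothing pushes that witness back into $\mathbb{E}_{n}$, and the example shows none need exist. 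You should also be aware that the paper's own proof is no stronger at this spot: it deduces non-degeneracy of $\omega_{n}$ directly from $\omega_{n}^{\flat}=\iota_{n}^{\ast}\circ\omega^{\flat}\circ\iota_{n}$, but $\iota_{n}^{\ast}$ annihilates the functionals vanishing on $\mathbb{E}_{n}$, so injectivity of $\omega^{\flat}$ and $\iota_{n}$ does not give injectivity of the composite. Part 1 is only tenable under the standing hypothesis opening Section \ref{S_linearSymplectic}, where one starts from a coherent sequence of forms each already assumed non-degenerate (equivalently, non-degeneracy of the restrictions must be added as a hypothesis); it cannot be derived from non-degeneracy of $\omega$ alone by any argument of the kind you outline.
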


\begin{proof}
${}$\newline\noindent1. Recall that a set $B$ in $\mathbb{E}$  is bounded
if and only if $B$ is a bounded subset of some $\mathbb{E}_{n}$ (\cite{CaPe}
Lemma 19). Consider the subset $B_{n}=\{u_{n}\in\mathbb{E}_{n}\;:\;||u_{n}%
||\leq1\}$, then $\hat{B}=\iota_{n}({B}_{n})$ is a bounded subset of
$\mathbb{E}$ and so there exists $K_{n}>0$ such that $|\omega(u,v)|\leq K_{n}$
for all $u,v\in\hat{B}$; This implies that $|\omega_{n}(u,v)|\leq K_{n}$ for
all $u,v\in B_{n}$ and so $\omega_{n}$ is a continuous skew symmetric bilinear
form on $\mathbb{E}_{n}$. Now we have $\omega_{n}^{\flat}=\iota_{n}^{\ast
}\circ\omega^{\flat}\circ\iota_{n}$. Indeed fix some $v\in\mathbb{E}_{n}$. For
any $u\in\mathbb{E}_{n}$ we have ${}\;\;\;\;\;\;\;\;\;\omega_{n}^{\flat
}(u)(v)=\omega(\iota(u),\iota(v))=\iota_{n}^{\ast}\circ\omega^{\flat}%
(\iota_{n}(u))(v).$ \newline This implies that $\omega_{n}^{\flat}=\iota
_{n}^{\ast}\circ\omega^{\flat}\circ\iota_{n}$ and so $\omega_{n}$ is non
degenerate.\newline Clearly, by construction, the sequence $(\omega_{n})$ is
coherent. Fix some $u,v$ in $\mathbb{E}$. Let $n_{0}$ be the smallest integer
such that $u$ and $v$ belongs to $\mathbb{E}_{n}$. Then for $n\geq n_{0}$, we
have $u_{n}=u_{n_{0}}$ and $v_{n}=v_{n_{0}}$ so $\omega(u,v)=\omega_{n}%
(u_{n},v_{n})$ for all $n\geq n_{0}$. It follows  that $\omega^{\flat
}(u)(v)=\omega_{n}^{\flat}(u_{n})(v_{n})$ for all $n\geq n_{0}$, which implies
the result. Finally, we have already seen that $\omega^{\flat}
=\underrightarrow{\lim}(\omega_{n}^{\flat})$ is well defined. Now, if as
previously, $n_{0}$ is the smallest integer such that $u$ and $v$ belongs to
$\mathbb{E}_{n}$ then for all $n\geq n_{0}$, we have $\omega_{n}^{\flat
}(u)(v)=\omega_{n}(u_{n},v_{n}).$ Since $\omega^{\flat}(u)(v)=\underrightarrow
{\lim}(\omega_{n}^{\flat}(u_{n})(v_{n}),$ this implies the last property.
\newline\noindent2. The same arguments as in the last part of the previous
proof permits to show that $\omega$ is well defined and, by construction, it
is a skew symmetric bilinear form. We have to show that $\omega$ is bounded.
If $B$ is a bounded set of $\mathbb{E}$ then $B$ is contained in some
$\mathbb{E}_{n}$ and so $\omega(u,v)=\omega_{n}(u,v)$ when $u$ and $v$ belongs
to $\mathbb{E}_{n}$. Since $\omega_{n}$ is continuous, there exists $K>0$ such
that $|\omega_{n}(u,v)|\leq K$  for  $u,v\in B$ and so $\omega$ is a bounded bilinear form. It
remains to show that $\omega^{\flat}$ is injective. Assume that $\omega
(u,v)=0$ for all $v\in\mathbb{E}$. Let $n_{0}$ the smallest integer such that
$u$ belongs to $E_{n_{0}}$. Our assumption implies that $\omega(u,v)=0$ for
all $v\in\mathbb{E}_{n}$ and all $n\geq n_{0}$. But $\omega(u,v)=\omega
_{n}(u,v)$ for all $v\in\mathbb{E}_{n}$. Since each $\omega_{n}$ is
symplectic, this implies that $u=0$.\newline${}\hfill$
\end{proof}


\subsection{Symplectic form on a direct limit of an ascending sequence of
Banach bundles}

\label{symplectic-banach-bundles}${}$\\
We consider an ascending sequence $\left\{  \left(  E_{n},\pi_{n}%
,M_{n}\right)  \right\}  _{n\in\mathbb{N}^{\ast}}$ of Banach vector bundles
(\textit{cf.} Appendix \ref{DirectLimitsOfBanachVectorBundles}) where the
typical fiber $\mathbb{E}_{n}$ is reflexive. The direct limit 
$E=\underrightarrow{\lim}E_{n}$ has a structure of n.n.H. convenient bundle
over $M=\underrightarrow{\lim}M_{n}$ with typical fiber $\mathbb{E}%
=\underrightarrow{\lim}\mathbb{E}_{n}$ (\textit{cf.} \cite{CaPe} Proposition
\ref{P_StructureOnDirectLimitLinearBundles}).\newline

A sequence $\left(  \omega_{n}\right)  _{n\in\mathbb{N}^{\ast}}$ of symplectic
forms $\omega_{n}$ on $E_{n}$ for $n\in\mathbb{N}^{\ast}$ is called \textit{an
ascending sequence of symplectic forms} or is \textit{coherent for short} if
we have
\[
\forall n\in\mathbb{N}^{\ast},\ (\lambda_{n}^{n+1})^{\ast}\omega_{n+1}%
\circ\epsilon_{n}^{n+1}=\omega_{n}.\newline%
\]
where $\lambda_{n}^{n+1}$ (resp. $\epsilon_{n}^{n+1}$) is the natural
inclusion of $E_{n}$ (resp. $M_{n}$) in $E_{n+1}$ (resp. $M_{n+1}$) (see the
notations introduced in Appendix \ref{direct_limit_Banach_manifolds_bundles}%
).\newline

Consider a coherent sequence $\left(  \omega_{n}\right)  _{n\in\mathbb{N}%
^{\ast}}$ of symplectic forms on $E_{n}$. According to the notations of
section \ref{DarbouxBanach}, since $\mathbb{E}_{n}$ is reflexive, we denote by
$(\widehat{E_{n}})_{x_{n}}$ the Banach space which is the completion of
$(E_{n})_{x_{n}}$ provided with the norm $||\;||_{(\omega_{_{n}})_{x_{n}}}$.
Then $(\omega_{n})_{x_{n}}$ can be extended to a continuous bilinear map
$(\hat{\omega}_{n})_{x_{n}}$ on $(E_{n})_{x_{n}}\times(\widehat{E_{n}}%
)_{x_{n}}$ and $(\omega_{n})_{x_{n}}^{\flat}$ becomes an isomorphism from
$(E_{n})_{x_{n}}$ to $(\widehat{E_{n}})_{x_{n}}^{\ast}$. We set
\[
\widehat{E_{n}^{\ast}}=\bigcup_{x_{n}\in M_{n}}(\widehat{E_{n}^{\ast}}%
)_{x_{n}}%
\]

\begin{proposition}
${}$ \label{omega_ncoherent}

\begin{enumerate}
\item[1.] In the previous context, consider a coherent sequence $\left(
\omega_{n}\right)  _{n\in\mathbb{N}^{\ast}}$ of symplectic forms $\omega_{n}$
on $E_{n}$. Then, for each $x=\underrightarrow{\lim}x_{n}\in M$, if
$u=\underrightarrow{\lim}u_{n}$ and $v=\underrightarrow{\lim}v_{n}$ belong to
$E_{x}$, then we have $\omega_{x}(u,v)=\underrightarrow{\lim}(\omega
_{n})_{x_{n}}(u_{n},v_{n})$ and we get a smooth symplectic form on $E$.
Moreover, we have $\omega_{x}(u,v)=\omega_{x}^{\flat}(u)(v)$ where $\omega
_{x}^{\flat}=\underrightarrow{\lim}(\omega_{n}^{\flat})_{x_{n}}$.

\item[2.] Let $\omega$ be a symplectic form on $M$. The inclusion
$\varepsilon_{n}^{n+1}:M_{n}\rightarrow M_{n+1}$ (resp. the map $\lambda
_{n}^{n+1}:E_{n}\rightarrow E_{n+1}$) defines a smooth injective map
$\varepsilon_{n}:M_{n}\rightarrow M$ (resp. an injective convenient bundle
morphism $\lambda_{n}:E_{n}\rightarrow E$). If we set $\omega_{n}=\lambda
_{n}^{\ast}(\omega)\circ\varepsilon_{n}$, then $\omega_{n}$ is a symplectic
form on $E_{n}$ and the sequence $\left(  \omega_{n}\right)  _{n\in
\mathbb{N}^{\ast}}$ is a family of coherent symplectic forms. Moreover the
symplectic form on $M$ associated to this family is exactly $\omega$.\\
\end{enumerate}
\end{proposition}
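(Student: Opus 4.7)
The plan is to reduce both parts to a fiber-by-fiber application of Proposition \ref{omegan} and then upgrade the pointwise constructions to smooth sections via the convenient direct-limit bundle structure on $E$ from Appendix \ref{direct_limit_Banach_manifolds_bundles}. For Part 1, fix $x = \underrightarrow{\lim}x_n \in M$. The coherence condition $(\lambda_n^{n+1})^{\ast}\omega_{n+1}\circ\varepsilon_n^{n+1} = \omega_n$, restricted to the fiber over $x_n$, says that $\bigl((\omega_n)_{x_n}\bigr)$ is a coherent sequence of symplectic forms on the ascending sequence of reflexive Banach spaces $\bigl((E_n)_{x_n}\bigr)$. Proposition \ref{omegan}(2) then yields a bounded non-degenerate skew-symmetric bilinear form $\omega_x$ on $E_x = \underrightarrow{\lim}(E_n)_{x_n}$ given by $\omega_x(u,v) = \underrightarrow{\lim}(\omega_n)_{x_n}(u_n,v_n)$, while the last assertion of Proposition \ref{omegan}(1) furnishes the dual formula $\omega_x^{\flat} = \underrightarrow{\lim}(\omega_n^{\flat})_{x_n}$. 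Smoothness of $x\mapsto\omega_x$ follows from smoothness of each $\omega_n$ on $M_n$ together with the universal property of the direct limit in the convenient category: since smooth curves into $M$ factor through some $M_n$ (see Appendix \ref{direct_limit_Banach_manifolds_bundles}), checking smoothness on each finite stage suffices.

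For Part 2, set $(\omega_n)_{x_n}(u_n,v_n) = \omega_{\varepsilon_n(x_n)}(\lambda_n u_n, \lambda_n v_n)$. Smoothness of $\varepsilon_n$ and $\lambda_n$ (as a convenient map and bundle morphism, respectively), given by Appendix \ref{direct_limit_Banach_manifolds_bundles}, together with smoothness of $\omega$, implies that $\omega_n$ is smooth on $E_n$. Non-degeneracy of $(\omega_n)_{x_n}$ on $(E_n)_{x_n}$ follows by applying Proposition \ref{omegan}(1) in the fiber over $\varepsilon_n(x_n)$, viewing $(E_n)_{x_n}$ as the $n$-th term of the ascending sequence whose limit is $E_{\varepsilon_n(x_n)}$. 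The coherence identity $(\lambda_n^{n+1})^{\ast}\omega_{n+1}\circ\varepsilon_n^{n+1} = \omega_n$ is then immediate from the functoriality $\lambda_{n+1}\circ\lambda_n^{n+1} = \lambda_n$ and $\varepsilon_{n+1}\circ\varepsilon_n^{n+1} = \varepsilon_n$ of the inductive system. Finally, feeding this coherent family back into Part 1 reconstructs $\omega$ fiber-wise by construction, proving the last claim.

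The main delicate point is the smoothness assertion on the non-necessarily-Hausdorff convenient bundle $E$: standard differential calculus is not directly available and one must invoke the curve-based characterization of smoothness in the convenient setting together with the fact, from Appendix \ref{direct_limit_Banach_manifolds_bundles}, that smooth curves into $E$ factor locally through some $E_n$. This localization is exactly what allows the fiber-wise construction provided by Proposition \ref{omegan} to globalize into a smooth symplectic form on $E$.
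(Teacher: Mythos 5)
Your proposal is correct and follows essentially the same route as the paper: both parts are reduced fiber-wise to Proposition \ref{omegan} (coherence of the $(\omega_n)_{x_n}$ giving the limit form and the formula for $\omega_x^{\flat}$, injectivity of $\lambda_n$, $\varepsilon_n$ giving non-degeneracy of the pullbacks), and smoothness is obtained by checking it on each level and invoking the direct-limit property of smooth maps in the convenient setting (the paper does this in a limit bundle chart via Lemma 22 of \cite{CaPe}, which is exactly the curve-factorization argument you describe). The only cosmetic difference is that you unpack that lemma explicitly rather than citing it.
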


\begin{proof}${}$\\
\noindent1. Consider $x=\underrightarrow{\lim}x_{n}\in M$ and
$u=\underrightarrow{\lim}u_{n}$ and $v=\underrightarrow{\lim}v_{n}$ in $E_{x}%
$. Let $n_{0}$ be the smallest integer such that $x$ belongs to $M_{n}$ and
$u,v$ belongs to $E_{x_{n}}$. Then, for all $n\geq n_{0}$, we have
$x_{n}=x_{n_{0}}$, $\;u_{n}=u_{n_{0}}$ and $v_{n}=v_{n_{0}}$. This implies
that $\omega_{x}(u,v)=(\omega_{n})_{x_{n}}(u_{n},v_{n})=(\omega_{n_{0}%
})_{x_{n_{0}}}(u_{n_{0}},v_{n_{0}})$. From Proposition \ref{omegan}, it
follows that $\omega_{x}$ is a symplectic form on $E_{x}$ and $\omega
_{x}(u,v)=\omega_{x}^{\flat}(u)(v)$. Consider any bundle chart $\left(
U=\underrightarrow{\lim}U_{n},\Phi=\underrightarrow{\lim}\Phi_{n}\right)  $,
where $\Phi_{n}$ is a trivialization of $(E_{n})_{|U_{n}}$ (\textit{cf.}
Appendix \ref{DirectLimitsOfBanachVectorBundles}). Since $(x_{n},u_{n}%
,v_{n})\mapsto(\omega_{n})_{x_{n}}(u_{n},v_{n})$ is a smooth map from
$(E_{n})_{|U_{n}}$ to $\mathbb{R}$, it follows that $(x,u,v)\mapsto\omega
_{x}(u,v)$ is a smooth map from $E_{|U}$ to $\mathbb{R}$ (\textit{cf.}
\cite{CaPe} Lemma 22). This implies that $\omega$ is a smooth symplectic
form.\newline2. Let $\omega$ be a symplectic form on $E$. From the universal
properties of a direct limit and because $\varepsilon_{n}^{n+1}:M_{n}%
\rightarrow M_{n+1}$ is an injective smooth map (resp. $\lambda_{n}%
^{n+1}:E_{n}\rightarrow E_{n+1}$ is an injective Banach bundle morphism), we
obtain a smooth injective map $\varepsilon_{n}:M_{n}\rightarrow M$ (resp. an
injective convenient bundle morphism $\lambda_{n}:E_{n}\rightarrow E$)
(\textit{cf.} \cite{CaPe} Lemma 22 again). This implies that $\omega_{n}$ is a
non degenerate $2$-form on $E_{n}$.
But from the definition of $\omega_{n}$, it is clear that $(\omega_{n})$ is a
sequence of coherent symplectic forms. Now, by application of the proof of
Point 1 to the sequence $(\omega_{n})$, the symplectic form defined by the
sequence $(\omega_{n})$ is clearly the given symplectic form $\omega$%
.\newline${}\hfill$
\end{proof}

\noindent According to the assumption of Theorem \ref{Moserlemma} we introduce the
following terminology:\newline

\begin{definition}
\label{Darbouxhyp} Let $\left\{  \left(  E_{n},\pi_{n},M_{n}\right)  \right\}
_{n\in\mathbb{N}^{\ast}}$ be a sequence of strong ascending Banach bundles
whose typical fiber $\mathbb{E}_{n}$ is reflexive. Consider a coherent
sequence $\left(  \omega_{n}\right)  _{n\in\mathbb{N}^{\ast}}$ of symplectic
forms $\omega_{n}$ on $E_{n}$. We say that the sequence $\left(  \omega
_{n}\right)  _{n\in\mathbb{N}^{\ast}}$ satisfies the Bambusi-Darboux assumption around
$x^{0}\in M$ if there exists a domain of a direct limit chart
$U=\underrightarrow{\lim}U_{n}$ around $x^{0}$ such that:

\begin{description}
\item[(i)] For each $n>0$, $(\widehat{E}_{n})_{|U_{n}}$ is a trivial Banach
bundle;

\item[(ii) ] For each $n>0$, $\;\omega_{n}$ can be extended to a smooth field
of continuous bilinear forms on $(E_{n})_{|U_{n}}\times(\widehat{E}%
_{n})_{|U_{n}}$.\newline
\end{description}
\end{definition}

Under these assumptions we have:

\begin{proposition}
\label{firstproperties} Consider a coherent sequence $\left(  \omega
_{n}\right)  _{n\in\mathbb{N}^{\ast}}$ of symplectic forms $\omega_{n}$ on
$E_{n}$ which satisfies the Bambusi-Darboux assumption around $x^{0}\in M$. Then we
have the following properties

\begin{enumerate}
\item[1.] The direct limit $\widehat{E}^{*}_{|U}=\underrightarrow{\lim
}\widehat{E_{n}}^{*}_{| U_{n}}$ is well defined and is a trivial convenient
bundle with typical fiber $\widehat{\mathbb{E}}^{*}=\underrightarrow{\lim
}\widehat{\mathbb{E}}^{*}_{n}$.

\item[2] The sequence $\left(  \omega_{n}^{\flat}\right)  _{n\in
\mathbb{N}^{\ast}}$of isomorphisms from $E_{n}$ to $\widehat{E}_{n}^{\ast}$
induces an isomorphism from $E_{|U}$ to $\widehat{E}_{|U}^{\ast}$.\newline
\end{enumerate}
\end{proposition}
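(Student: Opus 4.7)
The plan is to derive Part 1 by pushing Lemma \ref{wideEnascending} from the fiber level to the bundle level using the triviality supplied by assumption (i), and then to derive Part 2 by combining the fiberwise isomorphism property $\omega_n^{\flat}:E_n\to \widehat{E_n}^{\ast}$ (from the end of section \ref{linearsymplectic}) with the coherence of the sequence $(\omega_n)$ and the smoothness built into assumption (ii).

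For Part 1, my first move is to dualize the trivialization provided by assumption (i). For each $n$, fix a trivialization $\widehat{\Psi}_n:\widehat{E_n}_{|U_n}\to U_n\times\widehat{\mathbb{E}}_n$; its transpose gives a trivialization $(\widehat{\Psi}_n)^{\ast}:\widehat{E_n}^{\ast}_{|U_n}\to U_n\times\widehat{\mathbb{E}}_n^{\ast}$. Next, I need the family $\{\widehat{E_n}^{\ast}_{|U_n}\}$ to be an ascending sequence of Banach bundles. Fiberwise, the inclusion $(\widehat{E_n})_{x_n}^{\ast}\hookrightarrow (\widehat{E_{n+1}})_{x_{n+1}}^{\ast}$ is the content of Lemma \ref{wideEnascending}: it follows from the coherence $(\lambda_n^{n+1})^{\ast}\omega_{n+1}\circ\varepsilon_n^{n+1}=\omega_n$ that $(\omega_{n+1}^{\flat})_{x_{n+1}}$ restricts on $(E_n)_{x_n}$ to $(\omega_n^{\flat})_{x_n}$, identifying $(\widehat{E_n})_{x_n}^{\ast}$ with a Banach subspace of $(\widehat{E_{n+1}})_{x_{n+1}}^{\ast}$. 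Globally over $U_n$, the chosen trivializations convert this fiber inclusion into the bundle inclusion $U_n\times\widehat{\mathbb{E}}_n^{\ast}\hookrightarrow U_{n+1}\times\widehat{\mathbb{E}}_{n+1}^{\ast}$, so Appendix A applies and $\widehat{E}^{\ast}_{|U}=\underrightarrow{\lim}\widehat{E_n}^{\ast}_{|U_n}$ inherits a trivial n.n.H.\ convenient bundle structure with typical fiber $\widehat{\mathbb{E}}^{\ast}=\underrightarrow{\lim}\widehat{\mathbb{E}}_n^{\ast}$.

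For Part 2, the end of section \ref{linearsymplectic} tells me that each $(\omega_n)_{x_n}^{\flat}$ is a linear isomorphism from $(E_n)_{x_n}$ to $(\widehat{E_n})_{x_n}^{\ast}$, and assumption (ii) combined with the fact that a smooth field of bilinear forms with values in $\mathcal{L}(\mathbb{E}_n,\widehat{\mathbb{E}}_n^{\ast})$ stays in the open subset of isomorphisms (after shrinking $U_n$ if necessary around the image of $x^0$) promotes $\omega_n^{\flat}$ to a smooth Banach bundle isomorphism $E_{n|U_n}\to\widehat{E_n}^{\ast}_{|U_n}$. The coherence of $(\omega_n)$ is exactly what is needed for compatibility: on the subbundle $\lambda_n^{n+1}(E_{n|U_n})\subset E_{n+1|U_{n+1}}$, the map $\omega_{n+1}^{\flat}$ reduces to $\omega_n^{\flat}$ followed by the inclusion $\widehat{E_n}^{\ast}\hookrightarrow\widehat{E_{n+1}}^{\ast}$ established in Part 1. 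Hence the family $(\omega_n^{\flat})$ is a coherent sequence of Banach bundle isomorphisms and, by the universal property of direct limits (Appendix A and \cite{CaPe} Lemma 22), induces a convenient bundle morphism $\omega^{\flat}:E_{|U}\to\widehat{E}^{\ast}_{|U}$. Bijectivity is immediate fiber by fiber, and since being an isomorphism at the direct-limit level is detected on each level of the filtration, $\omega^{\flat}$ is a convenient bundle isomorphism.

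The main obstacle I expect is not the algebra—which is forced by coherence—but the transition from \emph{fiberwise} ascending inclusions to the \emph{bundle-level} ascending structure demanded by the direct-limit constructions of Appendix A. Concretely, I must ensure the trivializations $\widehat{\Psi}_n$ can be chosen so that the diagrams identifying $\widehat{E_n}^{\ast}_{|U_n}$ with its image inside $\widehat{E_{n+1}}^{\ast}_{|U_{n+1}}$ commute; this uses the direct limit chart $U=\underrightarrow{\lim}U_n$ but requires care because, a priori, trivializations produced by assumption (i) are only guaranteed to exist level by level. A possible remedy, should the chosen trivializations clash, is to transport the trivialization of $\widehat{E_{n+1}}^{\ast}_{|U_{n+1}}$ back to $U_n$ via $\lambda_n^{n+1}$ and use it to redefine the one at level $n$ coherently, which is legitimate since we only need \emph{some} trivializing atlas to witness the convenient bundle structure.
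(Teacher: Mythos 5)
Your proposal is correct and follows essentially the same route as the paper: dualize the trivializations from assumption (i), use Lemma \ref{wideEnascending} to get the fiberwise inclusions $\widehat{\mathbb{E}}_n^{\ast}\hookrightarrow\widehat{\mathbb{E}}_{n+1}^{\ast}$, conjugate the resulting product inclusions by the trivializations to obtain bonding maps for $\bigl((\widehat{E}_n^{\ast})_{|U_n}\bigr)$, and then pass the coherent family of bundle isomorphisms $\omega_n^{\flat}$ to the direct limit via the commutativity of these bonding maps. The coherence worry you raise about the trivializations is exactly what the paper's construction of $\widehat{\lambda}_n^{n+1}$ by conjugation resolves, so no separate remedy is needed.
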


\begin{proof}
From our assumptions, for each $n>0$, we have a sequence of trivializations
$\widehat{\Phi}_{n}:(\widehat{E}_{n})_{|U_{n}}\rightarrow U_{n}\times
\widehat{\mathbb{E}}_{n}$. Thus we obtain a sequence $\widehat{\Phi}_{n}%
^{-1}:U_{n}\times\widehat{\mathbb{E}}_{n}^{\ast}\rightarrow(\widehat{E}%
_{n})_{|U_{n}}^{\ast}$ of isomorphisms of trivial bundles. Now, we have
natural inclusions $\widehat{\iota}_{n}^{n+1}:$ $\widehat{\mathbb{E}}%
_{n}^{\ast}$ $\longrightarrow$ $\widehat{\mathbb{E}}_{n+1}^{\ast}$ and
$\varepsilon_{n}^{n+1}$ : $U_{n}$ $U_{n+1}$. So we get an injective bundle
morphism $\delta_{n}^{n+1}$ from $U_{n}\times\widehat{\mathbb{E}}_{n}^{\ast}$
to $U_{n+1}\times\widehat{\mathbb{E}}_{n+1}^{\ast}$. Therefore the map defined
by $\widehat{\lambda}_{n}^{n+1}(x,u)=(\Phi_{n+1}^{\ast})_{x}\circ\delta
_{n}^{n+1}\circ((\Phi_{n})_{n}^{\ast})^{-1})_{x}(u)$ for all $(x,u)\in
(\widehat{E}_{n})_{|U_{n}}$ is a bonding map for the ascending sequence of
trivial bundles $\left(  (\widehat{E}_{n}^{\ast})_{|U_{n}}\right)
_{n\in\mathbb{N}^{\ast}}$. Therefore the direct limits $\widehat{\Phi}^{\ast
}=\underrightarrow{\lim}\widehat{\Phi}_{n}^{\ast}$ and $\widehat{E}%
_{|U}=\underrightarrow{\lim}(\widehat{E}_{n}^{\ast})_{|U_{n}}$ are well
defined and $\widehat{\Phi}$ is a convenient isomorphism bundle from
$U\times\widehat{\mathbb{E}}$ to $\widehat{E}_{|U}$ which ends the proof of
Point 1.\newline\noindent2. At first, from Proposition \ref{omega_ncoherent}
Point 1, the sequence $\left(  \omega_{n}\right)  _{n\in\mathbb{N}^{\ast}}$
defines a symplectic form on $E$. From our assumption, since for each $n>0$ we
can extend $\omega_{n}$ to a bilinear onto $(E_{n})_{|U_{n}}\times(\widehat
{E}_{n})_{|U_{n}}$, this implies that $\omega_{n}^{\flat}$ is an isomorphism
from $(E_{n})_{|U_{n}}$ to $(\widehat{E}_{n}^{\ast})_{|U_{n}}$. Consider the
sequence of bonding maps $\widehat{\lambda}_{n}^{n+1}$ for the ascending
sequence of $\left(  (\widehat{E}_{n}^{\ast})_{|U_{n}}\right)  _{n\in
\mathbb{N}^{\ast}}$ previously defined. Then we have the following commutative
diagram:
\[%
\begin{array}
[c]{ccccccc}%
U\times\widehat{E}_{n+1}^{\ast} & \underleftarrow{\Phi_{n+1}^{\ast}} &
(\widehat{E}_{n+1}^{\ast})_{|U} & \underleftarrow{\omega_{n+1}^{\flat}} &
(E_{n+1})_{|U} & \underrightarrow{\Phi_{n+1}} & U\times\mathbb{E}_{n+1}\\
\uparrow &  & \uparrow\widehat{\lambda}_{n}^{n+1} &  & \uparrow\lambda
_{n}^{n+1} &  & \uparrow\\
U\times\widehat{E}_{n}^{\ast} & \underleftarrow{\Phi_{n}^{\ast}} &
(\widehat{E}_{n}^{\ast})_{|U} & \underleftarrow{\omega_{n}^{\flat}} &
(E_{n})_{|U} & \underrightarrow{\Phi_{n}} & U\times\mathbb{E}_{n}\\
&  &  &  &  &  &
\end{array}
\]
\noindent It follows that the direct limit $\omega^{\flat}=\underrightarrow
{\lim}\omega_{n}^{\flat}$ is well defined and is an isomorphism from $E_{|U}$
to $\widehat{E}_{|U}^{\ast}$ ${}\hfill$
\end{proof}


\subsection{Problem of existence of Darboux charts on a strong ascending
sequence of Banach manifolds}


\subsubsection{Conditions of existence of Darboux charts}

Let $\left\{  (M_{n},\varepsilon_{n}^{n+1})\right\}  _{n\in\mathbb{N}^{\ast}}$
be an ascending sequence of Banach manifolds where $M_{n}$ is modelled on a
Banach space $\mathbb{M}_{n}$ (\textit{cf.} Appendix
\ref{DirectLimitsOfAscendingSequencesOfBanachManifolds}). \newline

According to Corollary \ref{C_DirectLimitTangentBundle}, the sequence $\left(
TM_{n},\pi_{n},M_{n}\right)  _{n\in\mathbb{N}^{\ast}}$ associated to an
ascending sequence $\left\{  (M_{n},\varepsilon_{n}^{n+1})\right\}
_{n\in\mathbb{N}^{\ast}}$ has a direct limit $TM=\underrightarrow{\lim}TM_{n}$
which has a structure of convenient vector bundle whose typical fiber is
$\underrightarrow{\lim}\mathbb{\mathbb{M}}_{n}$ over $M=\underrightarrow{\lim
}M_{n}$.

\noindent Now, since $M$ is a convenient manifold, a \textit{symplectic form
on} $M$ is a differential $2$-form on $M$ which is non degenerate and such
that $d\omega=0$ (\textit{cf.} \cite{KrMi} 48).

\begin{theorem}
\label{omega_ncoherentM}${}$
\begin{enumerate}
\item[1.] In the previous context, consider a coherent sequence $\left(
\omega_{n}\right)  _{n\in\mathbb{N}^{\ast}}$ of symplectic forms $\omega_{n}$
on $M_{n}$\footnote{That is $(\omega_{n})$ is a coherent sequence of
symplectic forms on the bundles $TM_{n}$ as defined in section
\ref{symplectic-banach-bundles}}. Then, for each $x\in M$, the direct limit
$\omega_{x}^{\flat}=\underrightarrow{\lim}(\omega_{n})_{x_{n}}^{\flat}$ is
well defined and is an isomorphism from $T_{x}M$ to $(\widehat{T_{x}}M)^{\ast
}$. Moreover $\omega_{x}(u,v)=\omega_{x}^{\flat}(u)(v)$ defines a smooth
symplectic form on $M$.

\item[2.] Let $\omega$ be a symplectic form on $M$. The inclusion
$\varepsilon_{n}:M_{n}\rightarrow M$ is a smooth injective map. If we set
$\omega_{n}=\varepsilon_{n}^{\ast}(\omega)$, then $\omega_{n}$ is a symplectic
form on $M_{n}$ and the sequence $\left(  \omega_{n}\right)  _{n\in
\mathbb{N}^{\ast}}$ is a sequence of coherent symplectic forms. Moreover the
symplectic form on $M$ associated to this sequence is exactly $\omega$.
\end{enumerate}
\end{theorem}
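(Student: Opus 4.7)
The plan is to reduce this statement to the already established vector-bundle version (Proposition \ref{omega_ncoherent}) applied to the ascending sequence of tangent bundles $\left(TM_n,\pi_n,M_n\right)$, together with an additional verification of closedness which is specific to the manifold (as opposed to bundle) setting.

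For Point 1, I would first invoke Corollary \ref{C_DirectLimitTangentBundle} to identify $TM=\underrightarrow{\lim}TM_n$ as a convenient bundle over $M=\underrightarrow{\lim}M_n$. A coherent sequence $(\omega_n)$ of symplectic forms on $M_n$ is, by definition, a coherent sequence on the ascending sequence of Banach bundles $(TM_n)$, so Proposition \ref{omega_ncoherent}.1 immediately yields that for $x=\underrightarrow{\lim}x_n\in M$ and $u=\underrightarrow{\lim}u_n$, $v=\underrightarrow{\lim}v_n$ in $T_xM$, the formula $\omega_x(u,v)=\underrightarrow{\lim}(\omega_n)_{x_n}(u_n,v_n)$ is well defined, that $\omega_x^\flat=\underrightarrow{\lim}(\omega_n^\flat)_{x_n}$ is an isomorphism $T_xM\to(\widehat{T_xM})^\ast$, and that $(x,u,v)\mapsto\omega_x(u,v)$ is smooth on $TM\oplus TM$, hence that $\omega$ is a smooth non-degenerate $2$-form on the convenient manifold $M$.

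The one genuinely new point is that $\omega$ must be \emph{closed} in the convenient sense (\cite{KrMi} 48); this is the step where the manifold setting differs from the purely linear/bundle setting. I would argue as follows: closedness is tested by pulling back along smooth maps $c:\R^3\to M$, and such a map factors, locally on compact subsets of the parameter space, through some $M_n$ because the Curve Lemma for direct limits (recalled in Appendix A) guarantees that a smooth curve into $M$ is locally a smooth curve into some step $M_n$. Applying this to the chart representation of $d\omega$ and using $d\omega_n=0$ together with the naturality of the exterior derivative with respect to the inclusions $\varepsilon_n:M_n\to M$, one obtains $\varepsilon_n^\ast(d\omega)=d\varepsilon_n^\ast\omega=d\omega_n=0$ for every $n$; coherence and the direct-limit characterization of differential forms on $M$ then force $d\omega=0$. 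This is where I expect the only real technical care to be needed, namely in translating between the convenient exterior derivative on $M$ and the Banach-level exterior derivatives on the $M_n$.

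For Point 2, the injectivity and smoothness of $\varepsilon_n:M_n\to M$ follow from the universal property of direct limits (Appendix A, as used in Proposition \ref{omega_ncoherent}.2). Setting $\omega_n:=\varepsilon_n^\ast\omega$, coherence $(\varepsilon_n^{n+1})^\ast\omega_{n+1}=\omega_n$ is immediate from $\varepsilon_{n+1}\circ\varepsilon_n^{n+1}=\varepsilon_n$. Non-degeneracy of $\omega_n$ at each $x_n$ follows from the fact that $\omega^\flat_{\varepsilon_n(x_n)}$ restricts along $T_{x_n}\varepsilon_n$ exactly to $(\omega_n)_{x_n}^\flat$ (this is the analogue of the identity $\omega_n^\flat=\iota_n^\ast\circ\omega^\flat\circ\iota_n$ proved in Proposition \ref{omegan}.1), combined with the fact that if $(\omega_n)_{x_n}(u,v)=0$ for all $v\in T_{x_n}M_n$ then $\omega_{\varepsilon_n(x_n)}(T\varepsilon_n u,T\varepsilon_n v)=0$ for all $v\in T_{x_n}M_n$, and one may extend this vanishing to all $w\in T_xM$ by writing any such $w=\underrightarrow{\lim}w_m$ and invoking Point 1. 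Closedness of $\omega_n$ is automatic from $d\omega=0$. Finally, applying Point 1 to the coherent sequence $(\omega_n)$ just constructed and using $\omega_{\varepsilon_n(x_n)}(T\varepsilon_n u_n,T\varepsilon_n v_n)=(\omega_n)_{x_n}(u_n,v_n)$ together with $T_xM=\underrightarrow{\lim}T_{x_n}M_n$ identifies the reconstructed symplectic form with $\omega$ itself.
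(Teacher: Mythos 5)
Your proposal follows the same overall route as the paper: both points are reduced to Proposition \ref{omega_ncoherent} applied to the tangent bundles $TM_n$, and the only genuinely new verification is closedness, which you and the paper both settle via the naturality identity $\varepsilon_n^\ast(d\omega)=d_n\varepsilon_n^\ast\omega=d_n\omega_n=0$ together with the fact that any finite collection of tangent vectors at $x=\underrightarrow{\lim}x_n$ already lies in some $T_{x_n}M_n$. (The paper gets this last point directly from the set-theoretic identity $T_xM=\bigcup_n T_{x_n}M_n$, so you do not actually need the curve-lemma factorization you invoke, which in any case is not stated in Appendix A.)

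There is, however, one step in your Point 2 that would fail as written: the claim that, from $(\omega_n)_{x_n}(u,v)=0$ for all $v\in T_{x_n}M_n$, "one may extend this vanishing to all $w\in T_xM$" by writing $w=\underrightarrow{\lim}w_m$ and invoking Point 1. Point 1 only tells you that $\omega_x(T\varepsilon_n u,w)=(\omega_m)_{x_m}(u,w_m)$ when $w\in T_{x_m}M_m$ with $m>n$; it gives no reason for this quantity to vanish, since you only control $\omega_x(T\varepsilon_n u,\cdot)$ on the proper subspace $T_{x_n}M_n$. In general the restriction of a non-degenerate form to a subspace need not be non-degenerate (a Lagrangian subspace is the extreme case), so non-degeneracy of $\omega_n=\varepsilon_n^\ast\omega$ is not a formal consequence of non-degeneracy of $\omega$ plus injectivity of $T\varepsilon_n$. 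To be fair, the paper's own proof simply defers this point to Proposition \ref{omega_ncoherent}, whose justification ("this implies that $\omega_n$ is non degenerate") is equally terse, so you have not diverged from the paper's route; but the explicit argument you supply to fill that gap is not valid, and the assertion genuinely requires something more, e.g.\ that each $T_{x_n}M_n$ sits inside $T_xM$ as a symplectic subspace.
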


\begin{proof}${}$\\
\noindent1. By application of Point 1 of Proposition \ref{omega_ncoherent} to
$E_{n}=TM_{n}$, we obtain the first part. It only remains to prove that
$d\omega=0$. Let $\varepsilon_{n}$ be the inclusion of $M_{n}$ in $M$. Since
each inclusion $\varepsilon_{n}^{j}$ : $M_{n}$ $\longrightarrow$ $M_{j}$ is a
smooth injective map, so is $\varepsilon_{n}$. Therefore we have
\[
\varepsilon_{n}^{\ast}(d\omega)=d_{n}\varepsilon_{n}^{\ast}(\omega
)=d_{n}\omega_{n}%
\]
where $d_{n}$ is the exterior differential on $M_{n}$ (cf. \cite{KrMi}, 33),
consider $u=\underrightarrow{\lim}u_{n}$ and $v=\underrightarrow{\lim}v_{n}$
in $T_{x}M$ where $x=\underrightarrow{\lim}x_{n}$. Since as a set, we have
$T_{x}M=\bigcup\limits_{n\in\mathbb{N}^{\ast}}T_{x_{n}}M_{n}$, let $n_{0}$ be
the smallest integer $n$ such that $u$ and $v$ belong to $T_{x_{n}}M_{n}$.
Then, for all $n\geq n_{0}$, we have%

\begin{align*}
d\omega_{x}(u,v)  & =d\omega_{x}(T\varepsilon_{n}(u),T\varepsilon
_{n}(v))=\varepsilon_{n}^{\ast}(d\omega_{x})(u,v)\\
& =d_{n_{0}}\varepsilon_{n}^{\ast}(\omega_{x})(u,v){}=d_{n}(\omega_{n}%
)_{x_{n}}(u_{n},v_{n})\\
& =0.
\end{align*}
\noindent As $u=\underrightarrow{\lim}u_{n}$ and $v=\underrightarrow{\lim
}v_{n}$, we have $(\omega_{n})_{x}(u_{n},v_{n})=(\omega_{n_{0}})_{x_{n_{0}}%
}(u_{n_{0}},v_{n_{0}})$ for all $n\geq n_{0}$, which ends the proof of Point
1.\newline\noindent2. Again the first part is a consequence of Proposition
\ref{omega_ncoherent}. Now since $d\omega=0$, we obtain $d_{n}\omega_{n}=0$.
\newline But from the definition of $\omega_{n}$, it is clear that $\left(
\omega_{n}\right)  $ is a sequence of coherent symplectic forms. Now, by
application of the proof of Point 1 to the sequence $\left(  \omega
_{n}\right)  $, the symplectic form defined by $\left(  \omega_{n}\right)  $
is clearly the given symplectic form $\omega$.
\end{proof}

As in the Banach context, we introduce the notion of Darboux chart:

\begin{definition}
\label{ascendingdarbouxchart} Let $\omega$ be a weak symplectic form on the
direct limit $M=\underrightarrow{\lim}M_{n}$. We say thet a chart $(V,\psi)$
around $x_{0}$ is a Darboux chart if $\psi^{\ast}\omega^{0}=\omega$ where
$\omega^{0}$ is the constant form on $\psi(U)$ defined by $(\psi^{-1})^{\ast
}\omega_{x_{0}}$.
\end{definition}

We have the following necessary and sufficient conditions of existence of
Darboux charts on a direct limit of ascending Banach manifolds:

\begin{theorem}
\label{equivDarbouxascending}Let $\left\{  (M_{n},\varepsilon_{n}%
^{n+1})\right\}  _{n\in\mathbb{N}^{\ast}}$ be an ascending sequence of Banach
manifolds where $M_{n}$ is modelled on a reflexive Banach space $\mathbb{M}%
_{n}$.

\begin{enumerate}
\item[1.] Consider a coherent sequence $\left(  \omega_{n}\right)
_{n\in\mathbb{N}^{\ast}}$ of symplectic forms $\omega_{n}$ on $M_{n}$ and let
$\omega$ be the symplectic form generated by $\left(  \omega_{n}\right)  $ on
$M=\underrightarrow{\lim}M_{n}$. Assume the following property is satisfied:

(H) there exists a limit chart $(U=\underrightarrow{\lim}U_{n},\phi
=\underrightarrow{\lim}\phi_{n})$ around $x_{0}$ such that if $x_{0}$ belongs
to $M_{n}$, then $(U_{n},\phi_{n})$ is a Darboux chart around $x_{0}$ for
$\omega_{n}$.

Then $(U,\phi)$ is a Darboux chart around $x_{0}$ for $\omega$.

\item[2.] Let $\omega$ be a weak symplectic form on the direct limit
$M=\underrightarrow{\lim}M_{n}$. Assume that there exists a Darboux chart
$(V,\psi)$ around $x_{0}$ in $M$.\ If $\omega_{n}$ is the symplectic form on
$M_{n}$ induced by $\omega$, then there exists a limit chart
$(U=\underrightarrow{\lim}U_{n},\phi=\underrightarrow{\lim}\phi_{n})$ around
$x_{0}$ such that the property (H) is satisfied.\newline
\end{enumerate}
\end{theorem}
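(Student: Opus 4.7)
Both directions hinge on one small observation. Let $\varepsilon_{n}^{\mathbb{M}}:\mathbb{M}_{n}\hookrightarrow\mathbb{M}$ denote the inclusion of the Banach model $\mathbb{M}_{n}$ into the convenient model $\mathbb{M}=\underrightarrow{\lim}\mathbb{M}_{n}$. Using $(\omega_{n})_{x_{0}}=\varepsilon_{n}^{\ast}\omega_{x_{0}}$ together with the identity $\varepsilon_{n}\circ\phi_{n}^{-1}=\phi^{-1}\circ\varepsilon_{n}^{\mathbb{M}}$ that expresses $\phi=\underrightarrow{\lim}\phi_{n}$, one checks directly that the constant form $\omega^{0}=(\phi^{-1})^{\ast}\omega_{x_{0}}$ on $\phi(U)\subset\mathbb{M}$ restricts along $\varepsilon_{n}^{\mathbb{M}}$ to the constant form $\omega^{0}_{n}=(\phi_{n}^{-1})^{\ast}(\omega_{n})_{x_{0}}$ on $\phi_{n}(U_{n})\subset\mathbb{M}_{n}$; that is, $(\varepsilon_{n}^{\mathbb{M}})^{\ast}\omega^{0}=\omega^{0}_{n}$. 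This is the only nontrivial ingredient.

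For Point 1, fix $x\in U$ and $u,v\in T_{x}M$. By the direct-limit description of $TM$ (Corollary \ref{C_DirectLimitTangentBundle}) there exists $n_{0}$ such that $x\in U_{n}$ and $u,v\in T_{x}M_{n}$ for all $n\geq n_{0}$, and $T_{x}\phi\cdot u=T_{x}\phi_{n}\cdot u$ once the latter is viewed inside $\mathbb{M}$ via $\varepsilon_{n}^{\mathbb{M}}$. Combining the restriction identity with the level-$n$ Darboux hypothesis $\phi_{n}^{\ast}\omega_{n}^{0}=\omega_{n}$ and Theorem \ref{omega_ncoherentM}, I compute
\[
(\phi^{\ast}\omega^{0})_{x}(u,v)=\omega^{0}(T_{x}\phi\,u,T_{x}\phi\,v)=\omega_{n}^{0}(T_{x}\phi_{n}\,u,T_{x}\phi_{n}\,v)=(\omega_{n})_{x}(u,v)=\omega_{x}(u,v),
\]
so that $(U,\phi)$ is a Darboux chart for $\omega$ around $x_{0}$.

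For Point 2, since the convenient structure on $M$ is built from the atlas of limit charts (Definition \ref{DLChartProperty}), after shrinking $V$ around $x_{0}$ if necessary I may assume that $(V,\psi)=\underrightarrow{\lim}(V_{n},\psi_{n})$ is itself a limit chart. The compatibility $\psi\circ\varepsilon_{n}=\varepsilon_{n}^{\mathbb{M}}\circ\psi_{n}$ then yields
\[
\omega_{n}=\varepsilon_{n}^{\ast}\omega=\varepsilon_{n}^{\ast}\psi^{\ast}\omega^{0}=\psi_{n}^{\ast}(\varepsilon_{n}^{\mathbb{M}})^{\ast}\omega^{0}=\psi_{n}^{\ast}\omega_{n}^{0},
\]
using once more the restriction identity. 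Hence each $(V_{n},\psi_{n})$ is a Darboux chart for $\omega_{n}$ around $x_{0}$, and setting $(U,\phi):=(V,\psi)$ verifies property (H).

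The delicate step is thus concentrated in Point 2, namely the legitimacy of assuming that the given Darboux chart $(V,\psi)$ is a limit chart: a general chart on the convenient manifold $M$ could in principle fail to respect the filtration $\{\mathbb{M}_{n}\}\subset\mathbb{M}$, and in that case one would have to construct Darboux charts $(U_{n},\phi_{n})$ on each $M_{n}$ individually — using Theorem \ref{localDarboux} together with the trivialization of $\widehat{TM}$ inherited from $\psi$ — and then verify that they assemble into a limit chart. In the present setting, where the atlas on $M$ is by definition the atlas of limit charts, this difficulty does not arise, and the restriction identity $(\varepsilon_{n}^{\mathbb{M}})^{\ast}\omega^{0}=\omega_{n}^{0}$ carries the entire argument.
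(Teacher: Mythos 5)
Your proof is correct and follows essentially the same route as the paper: Point 1 is the same pointwise computation using the compatibility of $\phi$ with the $\phi_{n}$ and the fact that the constant form $\omega^{0}$ restricts to $\omega_{n}^{0}$ on each level, and Point 2 is the paper's construction $V_{n}=V\cap M_{n}$, $\psi_{n}=\psi\circ\varepsilon_{n}$ phrased as "the Darboux chart may be taken to be a limit chart." Your explicit flagging of the restriction identity $(\varepsilon_{n}^{\mathbb{M}})^{\ast}\omega^{0}=\omega_{n}^{0}$ and of the compatibility of $(V,\psi)$ with the filtration as the only delicate points matches what the paper uses implicitly.
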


\begin{proof}
1. Assume that the assumption (H) is true. Note that by construction of
$\omega$, we have $\omega_{n}=\epsilon_{n}^{\ast}\omega$. Fix some $x\in U$
and and $u,v$ in $T_{x}M$. Denote by $n_{0}$ the small integer such $x$
belongs to $M_{n}$ and $u,v$ are in $T_{x}M_{n}$. Then for any $n\geq n_{0}$,
we have $\omega_{x}(u,v)=(\omega_{n})_{x}(u,v)$. For the same reason, if
$\omega_{n}^{0}$ is the "model" in the Darboux chart for $\omega_{n}$, we also
have $\omega^{0}(u,v)=\omega_{n}^{0}(u,v)$. From the properties of
compatibility of $\phi$ and $\phi_{n}$, the induced form on $\phi_{n}(U_{n})$
by $\phi^{\ast}\omega$ is precisely $\phi_{n}^{\ast}(\omega_{n})$ and moreover
the constant $2$-form on $\phi_{n}(U_{n})$ induced by $\omega^{0}$ is
$\omega_{n}^{0}$. But according to our assumption, we have $\omega_{n}%
^{0}(T_{x}\phi_{n}(u),T_{x}\phi_{n}(v)=\omega_{n}^{0}((T_{x}\phi_{n}%
(u),T_{x}\phi_{n}(v))$. \noindent Thanks to the compatibility conditions
between $T_{x}\phi$ and $T_{x}\phi_{n}$, we then obtain:
\begin{align*}
\omega(T_{x}\phi(u),T_{x}\phi(v)  & =\omega_{n}^{0}(T_{x}\phi_{n}(u),T_{x}%
\phi_{n}(v)\\
& =\omega_{n}^{0}((T_{x}\phi_{n}(u),T_{x}\phi_{n}(v))\newline{}\\
& =\omega^{0}(T_{x}\phi(u),T_{x}\phi(v)\text{.}%
\end{align*}
\noindent Since $T_{x}\phi_{n}$ is an isomorphism onto $\mathbb{M}_{n}$, this
ends the proof of Point 1.\\

\noindent2. Given a symplectic form $\omega
$, we can consider the induced form $\omega_{n}=\varepsilon_{n}^{\ast}\omega$
on $M_{n}$. Consider a Darboux chart $(V,\psi)$ for $\omega$ around $x_{0}$.
We set $V_{n}=V\cap M_{n}$ and $\psi_{n}=\psi\circ\epsilon_{n}$. From the
condition of compatibility of $\psi$ and the inclusion of $\mathbb{M}_{n}$ in
$\mathbb{M}$, it follows that $\psi(V\cap M_{n})=\psi_{n}(V_{n})$ and
$\psi_{n}$ is a diffeomorphism from $V_{n}$ to $\psi(V_{n})$. Therefore we
have $V=\underrightarrow{\lim}V_{n}$ and $\psi=\underrightarrow{\lim}\psi_{n}%
$. Using the same argument as in the first part, we show that $(V_{n},\psi
_{n})$ is a Darboux chart for $\omega_{n}$.

\end{proof}

\bigskip
In section \ref{symplecasceloop}, the Example \ref{exDarbouxascending} is a
situation of an \textbf{ascending sequence} of Sobolev manifolds of loops on
which \textbf{a Darboux chart exists}. The same result is true if there there
exists an integer $n_{1}$ such that $M_{n}=M_{n_{1}}$ for all $n\geq n_{1}$.
However, in general, without very particular situations, such a sequence of
Darboux chart $\left\{  (V_{n},\psi_{n})\right\}  _{n\in\mathbb{N}^{\ast}}$
will satisfy $\bigcap\limits_{n\geqslant n_{0}}W_{n}=\{x_{0}\}$, \textbf{even}
if each $\omega_{n}$ is a strong symplectic form, according to the Example
\ref{nochartDarboux}. For such a type of discussion see the next section.


\subsubsection{Problem of existence of Darboux chart in general}

\label{no_Darboux_ascending}${}$\\

{ In this subsection, we will explain why, even in the context of an
ascending sequence of symplectic Banach manifolds which satisfies the
assumption of Theorem \ref{localDarboux}, in general, there does not exist
Darboux charts for the induced symplectic form on the direct limit.}\newline

Let $\left\{  (M_{n},\varepsilon_{n}^{n+1})\right\}  _{n\in\mathbb{N}^{\ast}}$
be an ascending sequence of Banach manifolds where $M_{n}$ is modelled on a
\textit{reflexive} Banach space $\mathbb{M}_{n}$.
Consider a coherent sequence $\left(  \omega_{n}\right)  _{n\in\mathbb{N}%
^{\ast}}$ of symplectic forms on $M_{n}$. According to the notations of
section \ref{S_linearSymplectic}, since $\mathbb{M}_{n}$ is reflexive, we
denote by $\widehat{T_{x_{n}}M_{n}}$ the Banach space which is the completion
of $T_{x_{n}}M_{n}$ provided with the norm $||\;||_{(\omega_{n})_{x_{n}}}$.
Then $(\omega_{n})_{x_{n}}$ can be extended to a continuous bilinear map
$(\hat{\omega}_{n})_{x_{n}}$ on $T_{x_{n}}M_{n}\times\widehat{T_{x_{n}}M_{n}}$
and $(\omega_{n})_{x_{n}}^{\flat}$ becomes an isomorphism from $T_{x_{n}}%
M_{n}$ to $(\widehat{T_{x_{n}}M_{n}})^{\ast}$. We set
\[
(\widehat{TM_{n}})^{\ast}=\bigcup\limits_{x_{n}\in M_{n}}(\widehat{T_{x_{n}%
}M_{n}})^{\ast}\text{.}%
\]
.

Then by application of Proposition \ref{firstproperties}, we have:

\begin{proposition}
\label{Darbouxascending}Let $\left\{  (M_{n},\varepsilon_{n}^{n+1})\right\}
_{n\in\mathbb{N}^{\ast}}$ be an ascending sequence of Banach manifolds whose
model is a reflexive Banach space $\mathbb{M}_{n}$. Consider a coherent
sequence $\left(  \omega_{n}\right)  _{n\in\mathbb{N}^{\ast}}$ of symplectic
forms $\omega_{n}$ on $E_{n}$. Assume that we have the following
assumption\footnote{This assumption corresponds to the Bambusi-Darboux assumption in
Definition \ref{Darbouxhyp}}:

\begin{enumerate}
\item[(i)] There exists a limit chart $(U=\underrightarrow{\lim}U_{n}%
,\phi=\underrightarrow{\lim}\phi_{n})$ around $x_{0}$ such that $(\widehat
{TM}_{n})_{|U_{n}}$ is a trivial Banach bundle.

\item[(ii)] $\;\omega_{n}$ can be extended to a smooth field of continuous
bilinear forms on $(TM_{n})_{|U_{n}}\times(\widehat{TM}_{n})_{|U_{n}}$.
\end{enumerate}

Then $\widehat{T^{\ast}M}_{|U}$ is a trivial bundle. If $\omega$ is the
symplectic form defined by the sequence $\left(  \omega_{n}\right)
_{n\in\mathbb{N}^{\ast}}$, then the morphism
\[
\omega^{\flat}:TM\rightarrow T^{\ast}M
\]
induces an isomorphism from $TM_{|U}$ to $\widehat{T^{\ast}M}_{|U}$.
\end{proposition}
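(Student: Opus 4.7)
The plan is to deduce this proposition as a direct specialization of Proposition \ref{firstproperties} to the case where the ambient Banach bundle is the tangent bundle $E_n=TM_n$. The work therefore amounts to verifying that all hypotheses of Proposition \ref{firstproperties} transfer and to identifying the morphism produced by its conclusion with $\omega^\flat$ attached to the symplectic form $\omega$ on $M$.

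First, I would set up the bundle framework. By Corollary \ref{C_DirectLimitTangentBundle}, the family $\{(TM_n,\pi_n,M_n)\}_{n\in\mathbb{N}^{\ast}}$ is an ascending sequence of Banach vector bundles whose typical fiber $\mathbb{M}_n$ is reflexive by hypothesis, and whose direct limit $TM=\underrightarrow{\lim}TM_n$ is a convenient bundle over $M=\underrightarrow{\lim}M_n$. Since $(\omega_n)$ is a coherent sequence of symplectic forms on the manifolds $M_n$ (and hence on the tangent bundles $TM_n$ in the sense of section \ref{symplectic-banach-bundles}) as recorded in Theorem \ref{omega_ncoherentM}, the hypotheses of Proposition \ref{firstproperties} concerning coherence are fulfilled. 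Hypotheses (i) and (ii) of the present proposition coincide verbatim with the Bambusi--Darboux assumption (Definition \ref{Darbouxhyp}) applied to $E_n=TM_n$.

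Second, I would invoke the two conclusions of Proposition \ref{firstproperties}. Conclusion 1 yields that
$$\widehat{T^{\ast}M}_{|U}=\underrightarrow{\lim}(\widehat{TM_n})^{\ast}_{|U_n}$$
is a well-defined trivial convenient bundle with typical fiber $\widehat{\mathbb{M}}^{\ast}=\underrightarrow{\lim}\widehat{\mathbb{M}}_n^{\ast}$; this gives the first assertion. Conclusion 2 provides a convenient bundle isomorphism
$$\underrightarrow{\lim}\omega_n^{\flat}:TM_{|U}\longrightarrow\widehat{T^{\ast}M}_{|U},$$
obtained from the commuting diagrams involving the trivializations $\Phi_n$, the bonding maps $\widehat{\lambda}_n^{n+1}$ and the isomorphisms $\omega_n^{\flat}:(TM_n)_{|U_n}\to(\widehat{TM_n})^{\ast}_{|U_n}$.

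Finally, I would identify $\underrightarrow{\lim}\omega_n^{\flat}$ with the intrinsic $\omega^{\flat}$. Theorem \ref{omega_ncoherentM}(1) asserts exactly this at each point $x=\underrightarrow{\lim}x_n\in U$: namely $\omega_x^{\flat}=\underrightarrow{\lim}(\omega_n)_{x_n}^{\flat}$ as maps from $T_xM$ to $(\widehat{T_xM})^{\ast}$. Coupled with the bundle trivializations produced in the previous step, this pointwise identification promotes to a smooth bundle morphism equal to the limit isomorphism, concluding that $\omega^{\flat}$ restricts to an isomorphism $TM_{|U}\to\widehat{T^{\ast}M}_{|U}$. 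There is no real obstacle here beyond a careful bookkeeping of the coherence of norms, trivializations and bonding maps; the substantive content has already been packaged into Proposition \ref{firstproperties} and Theorem \ref{omega_ncoherentM}.
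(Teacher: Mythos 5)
Your proposal is correct and follows exactly the route the paper takes: the paper states this proposition with no separate proof, introducing it with the phrase ``by application of Proposition \ref{firstproperties}'' to the case $E_n=TM_n$, which is precisely your specialization, supplemented by the identification of the limit morphism with $\omega^\flat$ via Theorem \ref{omega_ncoherentM}. Your write-up simply makes explicit the bookkeeping the paper leaves implicit.
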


Note that the context of Proposition \ref{Darbouxascending} covers the
particular framework of sequences of strong symplectic Banach manifolds
$(M_{n},\omega_{n})$.\newline

\textit{We will expose which arguments are needed to prove  a Darboux theorem in the context of direct limit
of Banach manifolds under the assumption of Proposition \ref{Darbouxascending}%
. In fact, we point out the problems that arise in establishing the existence
of a Darboux chart by Moser's method. According to Theorem \ref{existsol}, the
precise result that we can obtain in this way needs so strong assumptions on
}$\omega$\textit{ that we are not sure that this result can have non trivial
applications; so we omit to give such a precise Darboux Theorem (see Remark
\ref{positiveresult})}.\newline

Fix some point $a=\underrightarrow{\lim}a_{n}\in M$. In the context on
Proposition \ref{Darbouxascending}, as in the proof of Theorem
\ref{localDarboux}, on the direct limit chart $(U,\phi)$ around $a$, we can
replace $U$ by $\phi(U)$, $\omega$ by $(\phi)^{\ast}\omega$ on $\phi(U)$ in
the convenient space $\mathbb{M}$. Thus, if $\omega^{0}$ is the constant form
on $U$ defined by $\omega_{a}$, we consider the $1$-parameter family
\[
\omega^{t}=\omega^{0}+t\overline{\omega},\text{ with }\overline{\omega}%
=\omega-\omega^{0}.
\]
Since $\omega^{t}$ is closed and $\mathbb{M}$ is a convenient space, by
\cite{KrMi} Lemma 33.20, there exists a neighbourhood $V\subset U$ of $a$ and
a $1$-form $\alpha$ on $V$ such that $\alpha=d\overline{\omega}$ which is
given by
\[
\alpha_{x}:=\int_{0}^{1}s.\overline{\omega}_{sx}(x,\;)ds.
\]
Now, for all $0\leq t\leq1$, $\omega_{x_{0}}^{t}$ is an isomorphism from
$T_{a}M\equiv\mathbb{M}$ onto $\widehat{T_{a}M}\equiv\widehat{\mathbb{M}%
}^{\ast}$. In the Banach context, using the fact that the set of invertible
operators is open in the set of operators, after restricting $V$, we may
assume that $(\omega^{t})^{\flat}$ is a field of isomorphisms from
$\mathbb{M}$ to $\widehat{\mathbb{M}}^{\ast}$. \newline

Unfortunately, \textit{this result is not true in the convenient setting}
(\textit{cf.} \cite{KrMi}). Therefore, the classical proof does not work in
this way. \newline

However, let $\omega_{n}$ be the symplectic form induced by $\omega$ on
$M_{n}$. Therefore, for each $n$, let $\alpha_{n}$ be the $1$-form $\alpha
_{n}$ induced by $\alpha$ on $\phi_{n}(U_{n}\cap V))$. Then we have
$\omega_{n}=d\alpha_{n}$ and also
\[
(\alpha_{n})_{x_{n}}=\int_{0}^{1}s.(\overline{\omega}_{n})_{sx_{n}}%
(x_{n},\;)ds.
\]
where $\bar{\omega}_{n}=\omega_{n}-\omega_{n}^{0}$ is associated to the
$1$-parameter family $\omega_{n}^{t}=\omega^{n}+t\bar{\omega}_{n}$. We are
exactly in the context of the proof of Theorem \ref{localDarboux} and so the
local flow $F_{n}^{t}$ of $X_{n}^{t}=((\omega_{n}^{t})^{\flat})^{-1}%
(\alpha_{n})$ is a local diffeomorphism from a neighbourhood $W_{n}$ of $a$ in
$V_{n}$ and, in this way, we build a Darboux chart around $a_{n}$ in $M_{n}$.
Therefore, after restricting each $W_{n}$, \textbf{assume} that:\\

{\it("direct limit Darboux chart")  we have an
ascending sequence of such open sets $\{W_{n}\}_{n\in \N}$ then on $W=\underrightarrow
{\lim}W_{n}$, the family of local diffeomorphisms $F^{t}=\underrightarrow
{\lim}F_{n}^{t}$ is defined on $W$.} \\

 Recall that $\omega^{\flat}=\underrightarrow{\lim}\omega_{n}%
^{\flat}$ and $\omega^{\flat}$ is an isomorphism. Thus according to the
previous notations, we have a time dependent vector field
\[
X^{t}=((\omega^{t})^{\flat})^{-1}(\alpha)
\]
and again, we have $L_{X^{t}}\omega^{t}=0$. Of course, if the "direct limit Darboux chart"
assumption on $\{W_{n}\}_{n\in \N}$ is true, then $X^{t}=\underrightarrow{\lim}X_{n}^{t}$. 
So we obtain a Darboux chart as in the Banach context. Note that, in this
case, we are in the context of Theorem \ref{equivDarbouxascending} .\newline

\begin{remark}\label{pbDarbouxchart}\normalfont
In fact, under  the "direct limit Darboux chart"  assumption  the fow $F^{t}$ is the local flow (at
time $t\in\lbrack0,1]$) of $X^{t}=\underrightarrow{\lim}X_{n}^{t}$ where $X_{n}^{t}=((\omega_{n}^{t})^{\flat})^{-1}
(\alpha_{n})$ (with the previous notations).
Unfortunately, the "Darboux chart" assumption is not true since we have
$\bigcap\limits_{n\in\mathbb{N}^{\ast}}W_{n}=\{a\}$  in general (see Appendix
\ref{sufsolODE} and Appendix \ref{comments}).
\end{remark}

\begin{remark}
\label{positiveresult} \normalfont With the previous notations, if $X_{n}^{t}$ satisfies
the assumptions of Theorem \ref{existsol}, then we can define the flow of
$X^{t}=\underrightarrow{\lim}X_{n}^{t}$ on some neighbourhood $W$ of $a$ and,
in this way, we could produce a Theorem of existence of a Darboux chart for
$\omega$ around $a$. Of course, it is clear that we must translate the
conditions ($A_{n}$), (B), (C) of this theorem in terms of conditions on
$\left(  \omega_{n}\right)  $ or on $\omega$ which gives rise to a really non
applicable result even for a direct limit of finite dimensional symplectic
manifolds (\textit{cf.} Examples \ref{exexistsol} 3-4).\newline
\end{remark}

${}\;\;$From the example of \cite{Ma2}, we can obtain the following example
for which \textbf{there is no Darboux chart} on a direct limit of symplectic
Banach manifolds:

\begin{example}\normalfont
\label{nochartDarboux}Let $\mathbb{H}$ be a Hilbert space and endowed with its
inner product $<\;,\;>$. If $g$ is a Riemannian metric on $T\mathbb{H}%
=\mathbb{H}\times\mathbb{H}$, we can define a symplectic form $\omega$ in the
following way (\cite{Ma2}):
\[
2\;\omega_{u,e}((e_{1},e_{2}),(e_{3},e_{4}))=D_{u}g_{u}(e,e_{2}).e_{3}%
-D_{u}g_{u}(e,e_{3}).e_{1}+g_{u}(e_{4},e_{1})-g_{u}(e_{2},e_{3}).
\]
$\hfill$

Let $S:\mathbb{H}\rightarrow\mathbb{H}$ be a compact operator with dense
range, but proper subset of $\mathbb{H}$, which is selfadjoint and positive.
Given a fixed $e\in\mathbb{H}$, then $A_{x}=||x-e||^{2}Id_{\mathbb{H}}+S$ is a
smooth field of bounded operators of $\mathbb{H}$ which is an isomorphism for
all $x\not =e$ and $A_{e}(\mathbb{H})\not =\mathbb{H}$ but  $A_{e}(\mathbb{H})$
 is dense in
$\mathbb{H}$. Then $g_{x}(e,f)=<A_{x}(e),f>$ is a weak Riemanian metric and
the associated symplectic form $\omega$ is such that $\omega_{u}^{\flat}$ is
an isomorphism for $x\not =e$ and the range of $\omega_{e}^{\flat}$ is only
dense in $T_{e}\mathbb{H}$ (\textit{cf.} \cite{Ma2}).\newline

Let $(e_{k})\in\mathbb{H}$ be a sequence which converges to
$0$. For each $n>0$, we provide $\mathbb{H}_{n}%
=\mathbb{H}\oplus\mathbb{R}^{n}$ of the the inner product  $<\;,\;>_{n}$  obtained from the inner product $<\;,\;>$ on $\mathbb{H}$ and
the canonical one on $\mathbb{R}^{n}$. Now, we consider the continuous 
operator $S_{n}=S\oplus Id_{\mathbb{R}^{n}}$ and, for any $x_{n}\in
\mathbb{H}_{n}$, we set
\[
(A_{n})_{x_{n}}=||x_{n}-e_{n}||^{2}Id_{\mathbb{H}_{n}}+S_{n}%
\]
We denote by $g_{n}$ the Riemannian metric on $\mathbb{H}_{n}$ defined by
\[
(g_{n})_{x_{n}}(e_{n},f_{n})=<(A_{n})_{x}(e_{n}),f_{n}>
\]
and we consider the symplectic form $\omega_{n}$ associated to $g_{n}$ as
previously. 
We set
$\mathcal{H}=\underrightarrow{\lim}\mathbb{H}_{n}$   and 
$M_n=\left(\mathcal{H}\setminus\bigcup\limits_{n\in\mathbb{N}^{\ast}}\{e_{n}\}\right)\bigcap \mathbb{H}_n$.
Now,  the sequence
$\left(  \omega_{n}\right)  _{n\in\mathbb{N}^{\ast}}$ induces a family of coherent
strong symplectic forms on $M_n$ which induces a weak symplectic form $\omega$ on $M=\underrightarrow{\lim}M_n$  since the cotangent space
$T_{x}^{\ast}\mathcal{H}$ is the projective limit of $T_{x_{n}}^{\ast}M_{n}$ and so is then a Fr\'echet space which implies that  $\omega^\flat$ can not be surjective.
Now, for each $n$, we have a Darboux chart $(V_{n},F_{n})$ around $0$. But
from the previous construction, we must have $\bigcap\limits_{n\in
\mathbb{N}^{\ast}}V_{n}=\{0\}$. Therefore there is \textbf{no Darboux chart}
for $\omega$ around $0\in M$ according to Theorem
\ref{equivDarbouxascending}. Note that since each $\omega_{n}$ is a strong
symplectic form, the assumptions of Proposition \ref{Darbouxascending} are satisfied.
\end{example}


\section{A symplectic structure on Sobolev manifolds of loops}

\label{symplecticloops}

\subsection{ Sobolev spaces $\mathsf{L}_{k}^{p}(U,\mathbb{R}^{n})$}${}$

\label{LkpRm}
\textit{In this section, we use the notations of \cite{Sch} and the most part
of this summary comes from \cite{Sch} and \cite{AdFo}}.\newline

We denote by $\mathcal{L}^{k}(\mathbb{R}^{n},\mathbb{R}^{m})$, the vector
space of $k$-multilinear maps from $(R^{n})^{k}$ to $\mathbb{R}^{m}$. This
space is provided with the inner product
\[
A.B=Tr(B^{\ast}A)=\sum_{i=1}^{nk}A(e_{i}).B(e_{i})
\]
where $\left(  e_{1},\dots,e_{nk}\right)  $ is an orthogonal basis of
$(\mathbb{R}^{n})^{k}$ and $B^{\ast}$ is the adjoint of $B$ considered as a
linear map from $\mathbb{R}^{nk}$ to $\mathbb{R}^{m}$. The associated norm is
$||A||=\sqrt{Tr(A^{\ast}A)}$.\newline

Let $U$ be a bounded open set in $\mathbb{R}^{n}$. For $k\in\mathbb{N}%
\cup\left\{  {\infty}\right\}  $, we denote by $C^{k}(U,\mathbb{R}^{m})$ the
set of maps of class $C^{k}$ from $U$ to $\mathbb{R}^{m}$. For $f\in
C^{\infty}(U,\mathbb{R}^{m})$ and for any $\in\mathbb{N}$, we have the
continuous (total) derivative $D^{k}f:U\rightarrow\mathcal{L}^{k}%
(\mathbb{R}^{n},\mathbb{R}^{m})$. On the vector space $C^{k}(U,\mathbb{R}%
^{m})$, for $0\leq k<\infty$ and $1\leq p<\infty$, we consider the two norms:
\[
|f|_{k}=\sum_{i=0}^{k}\sup_{x\in U}||D^{i}f(x)||
\]

\[
||f||_{k,p}=\left(  \sum_{i=0}^{k}\int_{U}||D^{i}(x)f||^{p}\right)  ^{1/p}.
\]

Following \cite{Sch},
we denote by

$\bullet\;\;$$C_{b}(U,\mathbb{R}^{m}):=\{f\in C^{k}(U,\mathbb{R}^{m}%
)\;:\;|u|_{k}<\infty\}$ (which is a Banach space)

$\bullet\;\;$ $\mathsf{L}_{k}^{p}(U,\mathbb{R}^{m})$ the completion of the vector
space
\[
\{f\in C^{k}(U,\mathbb{R}^{m})\}\;:\;||f||_{k,p}<\infty\}.
\]

The collection of all spaces $\mathsf{L}_{k}^{p}(U,\mathbb{R}^{m})$ is
collectively called \textit{Sobolev spaces}. They have an equivalent
formulation as spaces of $p$-integrable functions with $p$-integrable
distributional (or weak) derivatives up to order $k$ (see for instance
\cite{Rou} p 15). \newline 
Now, for all $1<p<\infty$, each space $\mathsf{L}_{k}^{p}(U,\mathbb{R}^{m})$ is a \textit{separable reflexive Banach
space} (see \cite{Rou} p 15). \newline

We have the following embeddings (\textit{cf.} \cite{Bel})%

\[
\mathsf{L}^{p}(U)\subset\mathsf{L}^{q}(U)\; \text{ for } p\geq q;
\]
\[
\mathsf{L}^{p}_{k}(U)\subset\mathsf{L}_{h}^{q}(U)\; \text{ for } k\geq
h,\;p\geq q.
\]
If $U$ is "sufficiently regular":%
\[
\mathsf{L}_{k}^{p}(U)\text{ is dense in }\mathsf{L}_{k-1}^{p}(U)\;\text{ for
}1\leq p<\infty,
\]
and so, by induction
\[
\mathsf{L}_{k}^{p}(U)\text{ is dense in }\mathsf{L}^{p}(U)\;\text{ for }1\leq
p<\infty.
\]

Now, following \cite{AdFo} section 3.5,
for any given integers $n>1$ and $k>0$, let $N=N(n,k)$ be the number of
multi-indices $i=(i_{1},\dots,i_{n})$ such that
\[
|i|=\sum_{\alpha=1}^{n}|i_{\alpha}|\leq k.
\]
For each such multi-index $i$, let $U_{i}$ be a copy of $U$ in a different
copy of $\mathbb{R}^{n}$ so that the $N$ domains $U_{i}$ are considered as
disjoints sets. Let $U^{(k)}$ be the union of these $N$ disjoint domains
$U_{i}$ . Given a function $f\in\mathsf{L}(U,\mathbb{R}^{m})$, let $\tilde
{f}^{(k)}$ be the map on $U^{(k)}$ that coincides with $D^{i}f$ in the
distributional sense. Then the map $P$ taking $f$ to $\tilde{f}^{(k)}$ is an
isometry from $\mathsf{L}_{k}^{p}(U,\mathbb{R})$ into $\mathsf{L}^{p}%
(U^{(k)},\mathbb{R})$ whose range is closed. Now since $\mathsf{L}_{k}%
^{p}(U,\mathbb{R}^{m})=\{(f_{1},\dots,f_{m})\;:\;f_{1},\dots,f_{m}%
\in\mathsf{L}^{p}(U^{(k)},\mathbb{R})\}$, we obtain an isometry $P$ from
$\mathsf{L}_{k}^{p}(U,\mathbb{R}^{m})$ into $\mathsf{L}^{p}(U^{(k)}%
,\mathbb{R})^{m}$ whose range is also closed.\newline

Let $q$ be the "conjugate" exponent to $p$ that is $\dfrac{1}{p}+\dfrac{1}%
{q}=1$. According to \cite{AdFo} section 3.8
we have a "bracket duality" between $\mathsf{L}_{k}^{q}(U,\mathbb{R}^{n})$ and
$\mathsf{L}_{k}^{p}(U,\mathbb{R}^{m})$ given by:
\[
<f,g>=\sum_{j=1}^{m}\int_{U}f_{j}(x)g_{j}(x)dx
\]
From the definition of $\mathsf{L}_{k}^{p}(U,\mathbb{R}^{n})$ and the previous
isometry, we have (\cite{AdFo}):

\begin{theorem}
\label{dualLp} For every $L\in(\mathsf{L}_{k}^{p}(U,\mathbb{R}^{m}))^{\ast}$
there exists $g\in\mathsf{L}_{k}^{q}(U^{(k)},\mathbb{R}^{m})$ such that, if
$g_{i}$ denotes the restriction of $g$ to $U_{i}$, then for all $f\in
\mathsf{L}_{k}^{p}(U,\mathbb{R}^{m})$, we have
\[
L(f)=\sum_{0\leq|i|\leq k}<D^{i}f,g_{i}>
\]

\end{theorem}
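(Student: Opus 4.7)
The plan is to reduce the problem to the classical duality $(\mathsf{L}^p)^* \cong \mathsf{L}^q$ on the ordinary Lebesgue space $\mathsf{L}^p(U^{(k)}, \mathbb{R}^m)$ by way of the isometry $P$ that is already constructed in the excerpt, combined with a Hahn--Banach extension.

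First I would record the setup: the map $P : \mathsf{L}_k^p(U, \mathbb{R}^m) \to \mathsf{L}^p(U^{(k)}, \mathbb{R}^m)$ sending $f$ to the tuple $\tilde f^{(k)}$ whose restriction to $U_i$ equals $D^i f$ (in the distributional sense) is a linear isometric embedding, and its range $V := P(\mathsf{L}_k^p(U, \mathbb{R}^m))$ is a closed subspace of $\mathsf{L}^p(U^{(k)}, \mathbb{R}^m)$. Since $P : \mathsf{L}_k^p(U, \mathbb{R}^m) \to V$ is an isometric isomorphism, a continuous linear functional $L$ on $\mathsf{L}_k^p(U, \mathbb{R}^m)$ transfers to a continuous linear functional $L \circ P^{-1}$ on $V$ with the same norm.

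Next I would apply the Hahn--Banach theorem to extend $L \circ P^{-1}$ from the closed subspace $V$ to a continuous linear functional $\tilde L$ on all of $\mathsf{L}^p(U^{(k)}, \mathbb{R}^m)$. Here $1 < p < \infty$, so the classical Riesz representation theorem for $\mathsf{L}^p$-spaces produces a unique $g \in \mathsf{L}^q(U^{(k)}, \mathbb{R}^m)$ such that
\[
\tilde L(h) = \sum_{j=1}^m \int_{U^{(k)}} h_j(y) g_j(y)\, dy \qquad \text{for every } h \in \mathsf{L}^p(U^{(k)}, \mathbb{R}^m).
\]
Decomposing $U^{(k)} = \bigsqcup_{|i| \leq k} U_i$ and writing $g_i$ for the restriction of $g$ to $U_i$ shows $g \in \mathsf{L}_k^q(U^{(k)}, \mathbb{R}^m)$ in the sense indicated by the bracket duality defined just before the theorem.

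Finally I would read the formula back through $P$. For any $f \in \mathsf{L}_k^p(U, \mathbb{R}^m)$, we have $P(f)_{|U_i} = D^i f$, so
\[
L(f) = (L \circ P^{-1})(P(f)) = \tilde L(P(f)) = \sum_{0 \leq |i| \leq k} \langle D^i f, g_i \rangle,
\]
which is the desired representation. The only delicate point in the argument is checking that the isometric identification provided by $P$ is genuinely compatible with the bracket duality of the excerpt (so that the same $g$ representing $\tilde L$ on $\mathsf{L}^p(U^{(k)}, \mathbb{R}^m)$ automatically yields the claimed sum-of-brackets formula on $\mathsf{L}_k^p$); once the embedding $P$ is in hand, this is just a matter of unpacking definitions. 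Hahn--Banach is what makes the $g$ exist at all, but this $g$ is not unique: it is determined only modulo the annihilator $V^{\perp}$ of the range of $P$.
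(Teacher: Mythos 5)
Your argument is correct and is exactly the standard one: the paper gives no proof of its own here but quotes the result from \cite{AdFo}, and the Adams--Fournier proof is precisely your chain (isometric embedding $P$ with closed range, Hahn--Banach extension of $L\circ P^{-1}$ to all of $\mathsf{L}^{p}(U^{(k)},\mathbb{R}^{m})$, Riesz representation $(\mathsf{L}^{p})^{\ast}\cong\mathsf{L}^{q}$, then pull back through $P$). Your closing remark that $g$ is only determined modulo the annihilator of the range of $P$ is also the correct caveat.
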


In fact on ${L}_{k}^{q}(U,\mathbb{R}^{m})$, we can define the norm
(\textit{cf.} \cite{AdFo} section 3.14)
\[
||g||_{-k,q}=\sup\{|<f,g>|\;:f\in\mathsf{L}_{k}^{p}(U,\mathbb{R}%
^{m}),||f||_{k,p}\leq1\}.
\]
And then $(\mathsf{L}_{k}^{p}(U,\mathbb{R}^{m})^{\ast}$ is the completion of
${L}_{k}^{q}(U,\mathbb{R}^{n})$ according to the previous norm.

\begin{remark}
\label{S1} \normalfont Using the same arguments as in \cite{Sch} Corollary 4.3.3, we obtain
that all the previous results are true by replacing $U$ by a compact manifold
without boundary.
\end{remark}


\subsection{Sobolev manifold structure on the set of loops of a manifold}${}$


Let $M$ be a finite dimensional manifold of dimension $m$. We denote by
$C^{0}(\mathbb{S}^{1},M)$ the set of $C^{0}$-loops of $M$. Now from Theorem
4.3.5 and Theorem 4.4.3 in \cite{Sch}, it follows that there exists a well
defined subset $\mathsf{L}_{k}^{p}(\mathbb{S}^{1},M)$ of $C^{0}(\mathbb{S}%
^{1},M)$ which has a Banach structure modelled on the Banach space
$\mathsf{L}_{k}^{p}(\mathbb{S}^{1},\gamma^{\ast}(TM))$ of sections of "class
$\mathsf{L}_{k}^{p}$" of the pull-back $\gamma^{\ast}(TM)$ over $\mathbb{S}%
^{1}$ for any $\gamma\in C^{\infty}(\mathbb{S}^{1},M)$.
 From  Theorem 4.4.3 in \cite{Sch},  we get the following Theorem:

\begin{theorem}
\label{Lkp} For $k>0$ and $1\leq p<\infty$, there exists a subset
$\mathsf{L}_{k}^{p}(\mathbb{S}^{1},M)$ of the set $C^{0}(\mathbb{S}^{1},M)$
of continuous loops in $M$ which has a Banach manifold structure modelled on
$\mathsf{L}_{k}^{p}(\mathbb{S}^{1}, \mathbb{R}^{m})$ which is a reflexive
Banach space. Moreover the topology of this manifold is  Hausdorff and paracompact.\newline
\end{theorem}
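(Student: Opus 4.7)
The plan is to invoke \cite{Sch} Theorem 4.4.3 for the core manifold construction, and then independently verify the added claims on reflexivity of the model and on the Hausdorff/paracompact properties, using the material developed in subsection \ref{LkpRm} together with Remark \ref{S1}.

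First, I would recall the construction outline: fix an auxiliary Riemannian metric on $M$ with exponential map $\exp$. Given a smooth reference loop $\gamma_{0}\in C^{\infty}(\mathbb{S}^{1},M)$, the pullback bundle $\gamma_{0}^{\ast}(TM)\to\mathbb{S}^{1}$ is trivializable (any vector bundle over $\mathbb{S}^{1}$ of fiber dimension $m$ is, up to a twist which can be absorbed in a double cover argument, a trivial bundle), so $\mathsf{L}_{k}^{p}(\mathbb{S}^{1},\gamma_{0}^{\ast}(TM))$ is linearly isomorphic to $\mathsf{L}_{k}^{p}(\mathbb{S}^{1},\mathbb{R}^{m})$. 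A chart around $\gamma_{0}$ is defined on a sufficiently small norm-neighborhood $U_{\gamma_{0}}$ of the zero section by
\[
\Phi_{\gamma_{0}}(\xi)(t)=\exp_{\gamma_{0}(t)}(\xi(t)).
\]
The set $\mathsf{L}_{k}^{p}(\mathbb{S}^{1},M)$ is then the union of the images of these charts as $\gamma_{0}$ ranges over smooth loops, which is contained in $C^{0}(\mathbb{S}^{1},M)$ thanks to the Sobolev embedding $\mathsf{L}_{k}^{p}(\mathbb{S}^{1})\hookrightarrow C^{0}(\mathbb{S}^{1})$ (valid whenever $kp>1$, in particular for $k\geq 1$ and $p>1$).

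Second, chart compatibility: the transition maps are pointwise compositions with smooth bundle maps of the form $(x,v)\mapsto\exp_{\gamma_{1}(t)}^{-1}\circ\exp_{\gamma_{0}(t)}(v)$. Their smoothness at the $\mathsf{L}_{k}^{p}$ regularity is the $\omega$-lemma and rests on the fact that $\mathsf{L}_{k}^{p}(\mathbb{S}^{1})$ is a Banach algebra continuously embedded in $C^{0}(\mathbb{S}^{1})$. This is the heart of \cite{Sch} Theorem 4.4.3 and is the step I expect to be the most delicate; here I would cite it rather than reproduce it.

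Third, I would prove the remaining claims. Reflexivity of the model follows from the isometric embedding $P$ described in subsection \ref{LkpRm} (transferred to $\mathbb{S}^{1}$ by Remark \ref{S1}), which realizes $\mathsf{L}_{k}^{p}(\mathbb{S}^{1},\mathbb{R}^{m})$ as a closed subspace of $\mathsf{L}^{p}(\mathbb{S}^{1,(k)},\mathbb{R})^{m}$; since $\mathsf{L}^{p}$ is reflexive for $1<p<\infty$ and closed subspaces of reflexive spaces are reflexive, the model is reflexive. The Hausdorff property follows because the Sobolev topology is finer than the $C^{0}$ topology on $C^{0}(\mathbb{S}^{1},M)$, and the latter is Hausdorff. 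For paracompactness, the separability of $\mathsf{L}_{k}^{p}(\mathbb{S}^{1},\mathbb{R}^{m})$ (also from subsection \ref{LkpRm}) combined with second countability of $M$ yields second countability of $\mathsf{L}_{k}^{p}(\mathbb{S}^{1},M)$; a second countable locally Banach Hausdorff manifold is metrizable, hence paracompact.
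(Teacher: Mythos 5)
Your proposal follows essentially the same route as the paper: both delegate the core manifold construction to Schrader's Theorem 4.4.3 and build the atlas from exponential charts $\Phi_{\gamma_{0}}$ around smooth reference loops, with the Sobolev embedding into $C^{0}$ guaranteeing that the resulting set sits inside $C^{0}(\mathbb{S}^{1},M)$. The only divergences are minor: where you argue Hausdorffness and paracompactness directly via second countability and metrizability, the paper simply refers to Corollary 3.23 of \cite{Sta}, and (exactly as in the paper's own Section 4.1) your reflexivity argument genuinely requires $1<p<\infty$, so the case $p=1$ admitted in the statement is not covered by it.
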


\begin{proof}
\textit{summarized.} For a complete proof see \cite{Sch}.\newline According to
the proof of Theorem 4.4.3 in \cite{Sch}, we only describe how the set
$\mathsf{L}_{k}^{p}(\mathbb{S}^{1},M)$ is built and give an atlas of this
Banach structure since these results will be used latter.\newline Choose any
Riemannian metric on $M$ and denote by $\exp$ the exponential map of its
Levi-Civita connection. Then $\exp$ is defined on an neighbourhood
$\mathcal{U}$ in $TM$ of the zero section. In fact, since $\exp$ is a local
diffeomorphism, if $\pi_{M}:TM\rightarrow M$ is the projection, after
shrinking $\mathcal{U}$ if necessary, we can assume that the map
$\mathcal{F}:=(\exp,\pi_{M})$ is a diffeomorphism from $\mathcal{U}$ onto an
open neighbourhood $\mathcal{V}$ of the diagonal of $M\times M$. Consider any
$\gamma:\mathbb{S}^{1}\rightarrow M$ which is smooth. We consider :
$\mathcal{O}_{\gamma}=\{\gamma^{\prime}\in C^{0}(\mathbb{S}^{1},M)\;:\forall
t\in\mathbb{S}^{1},\;(\gamma(t),\gamma^{\prime}(t))\in\mathcal{V}\}$ and
$\Phi_{\gamma}(\gamma^{\prime})=\mathcal{F}^{-1}(\gamma,\gamma^{\prime})\text{
for }\hat{E}\gamma^{\prime}\in\mathcal{O}_{\gamma}$ \noindent Of course
$\gamma$ belongs to $\mathcal{O}_{\gamma}$. Consider the map $i_{\gamma
}(\gamma^{\prime}):=(\gamma,\gamma^{\prime})$ from $\mathcal{O}$ to
$C^{0}(\mathbb{S}^{1},M\times M)$ then we have $\Phi_{\gamma}=\mathcal{F}%
^{-1}\circ i_{\gamma}$ and so $\Phi_{\gamma}(\mathcal{O}_{\gamma})$ is an open
set in the set $\Gamma^{0}(\gamma^{\ast}TM)\equiv C^{0}(\mathbb{S}%
^{1},\mathbb{R}^{m})$ of continuous sections of $\gamma^{\ast}TM$. We set
\[
U_{\gamma}=\{\gamma^{\prime}\in\mathcal{O}_{\gamma},\gamma^{\prime}%
\in\mathsf{L}_{k}^{p}(\mathbb{S}^{1},\mathbb{R}^{m})\cap\Phi_{\gamma
}(\mathcal{O}_{\gamma})\}
\]
\noindent Now for $k>0$, $\mathsf{L}_{k}^{p}(\mathbb{S}^{1},\mathbb{R}^{m})$
is continuously embedded in $C^{0}(\mathbb{S}^{1},\mathbb{R}^{m})$
(\textit{cf.} consequence of Theorem 4.3.5 in \cite{Sch}) and $C^{\infty
}(\mathbb{S}^{1},\mathbb{R}^{m})\cap\mathsf{L}_{k}^{p}(\mathbb{S}%
^{1},\mathbb{R}^{m})$ is dense in $\mathsf{L}_{k}^{p}(\mathbb{S}%
^{1},\mathbb{R}^{m})$ (consequence of Theorem of Meyers-Serrin \cite{MeSe});
 It follows that $U_{\gamma}$ is a non empty open set of $\mathsf{L}_{k}%
^{p}(\mathbb{S}^{1},\mathbb{R}^{m})$ and again $\gamma$ belongs to $U_{\gamma
}$. Then we set
\[
\mathsf{L}_{k}^{p}(\mathbb{S}^{1},M)=\bigcup_{\gamma\in C^{\infty}%
(\mathbb{S}^{1},M)}U_{\gamma}.
\]
\noindent Then $\{(U_{\gamma},\phi_{\gamma})\}_{\gamma\in C^{\infty
}(\mathbb{S}^{1},M)}$ defines an atlas for a Banach structure modelled on
$\mathsf{L}_{k}^{p}(\mathbb{S}^{1},\mathbb{R}^{m})$.

The proof that   the topology of this manifold is Hausdorff and paracompact in our context is, point by point, analogous to the proof of Corollary 3.23 of \cite{Sta}.\newline
\end{proof}


\subsection{ A symplectic form on $\mathsf{L}_{k}^{p}(\mathbb{S}^{1},M)$}${}$


Let $(M, \omega)$ be a symplectic manifold of dimension $m=2m^{\prime}$. As in
\cite{Ku2}, for any $\gamma\in\mathsf{L}_{k}^{p}(\mathbb{S}^{1},M)$ and $X,
Y\in T_{\gamma}\mathsf{L}_{k}^{p}(\mathbb{S}^{1},\mathbb{R}^{m})\equiv
\mathsf{L}_{k}^{p}(\gamma^{*}TM)$, we set
\begin{align}
\label{defiO}\Omega_{\gamma}(X,Y)=\int_{\mathbb{S}^{1}}\omega_{\gamma
(t)}(X(t),Y(t)) dt
\end{align}

\begin{theorem}
\label{formO} For $k>0$ and $1\leq p\leq\infty$, $\Omega$ is a symplectic
weakly non degenerate form. Moreover $\Omega$ is a strong symplectic form if
and only if $p=2$
\end{theorem}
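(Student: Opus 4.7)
The statement splits into three pieces—smoothness and closedness of $\Omega$, weak non-degeneracy, and strong non-degeneracy iff $p = 2$. For smoothness and closedness I would use the evaluation map $\mathrm{ev}: \mathbb{S}^{1} \times \mathsf{L}_{k}^{p}(\mathbb{S}^{1}, M) \to M$, $(t, \gamma) \mapsto \gamma(t)$, which is smooth in the structure of Theorem \ref{Lkp} (thanks to the Sobolev embedding $\mathsf{L}_{k}^{p} \hookrightarrow C^{0}$ for $k \geq 1$ on $\mathbb{S}^{1}$). The formula $\Omega = \int_{\mathbb{S}^{1}} \mathrm{ev}^{*}\omega$ (fiber integration along $\mathbb{S}^{1}$) immediately gives smoothness of $\Omega$; and since $d\omega = 0$ and $\mathbb{S}^{1}$ has no boundary, fiber integration commutes with $d$, so $d\Omega = 0$. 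A coordinate verification in the charts $(U_{\gamma}, \Phi_{\gamma})$ from the proof of Theorem \ref{Lkp} is an alternative route.

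For weak non-degeneracy, assume $\Omega_{\gamma}(X, Y) = 0$ for every $Y \in \mathsf{L}_{k}^{p}(\gamma^{*} TM)$. Working in a trivialization of $\gamma^{*} TM$ over a neighbourhood of any $t_{0} \in \mathbb{S}^{1}$, I would test against $Y(t) = \phi(t) v$ with $\phi \in C^{\infty}(\mathbb{S}^{1})$ compactly supported near $t_{0}$ and $v$ a fixed trivialized vector. Such $Y$ lie in $\mathsf{L}_{k}^{p}(\gamma^{*}TM)$, and the hypothesis yields $\int \omega_{\gamma(t)}(X(t), v)\, \phi(t)\, dt = 0$ for all $\phi$ and $v$. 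The fundamental lemma of the calculus of variations combined with the pointwise non-degeneracy of $\omega$ on $M$ (and the continuity of $X$) then forces $X \equiv 0$.

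For the "strong iff $p = 2$" part, $\Omega_{\gamma}^{\flat}(X)$ is the functional $Y \mapsto \int \omega_{\gamma(t)}(X(t), Y(t))\, dt$, which, via the pointwise isomorphism $\omega^{\flat}: TM \to T^{*}M$, is the $L^{2}$-pairing of $Y$ with $\omega^{\flat} \circ X \in \mathsf{L}_{k}^{p}(\gamma^{*} T^{*}M)$. By Theorem \ref{dualLp} (adapted to $\mathbb{S}^{1}$ via Remark \ref{S1}), the cotangent space $T^{*}_{\gamma} \mathsf{L}_{k}^{p}(\mathbb{S}^{1}, M)$ is identified through the $L^{2}$-integration with $\mathsf{L}_{k}^{q}(\gamma^{*} T^{*}M)$, where $1/p + 1/q = 1$. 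When $p = 2$ one has $q = 2$, and $\Omega_{\gamma}^{\flat}$ is the fiberwise application of the pointwise isomorphism $\omega^{\flat}$, yielding a bundle isomorphism $\mathsf{L}_{k}^{2}(\gamma^{*}TM) \to \mathsf{L}_{k}^{2}(\gamma^{*}T^{*}M)$; hence $\Omega$ is strong. When $p \neq 2$, $p \neq q$ and $\mathsf{L}_{k}^{p} \neq \mathsf{L}_{k}^{q}$ on the finite-measure parameter $\mathbb{S}^{1}$ (strict inclusion in the direction determined by the sign of $p - 2$), so the image of $\Omega_{\gamma}^{\flat}$ (living in $\mathsf{L}_{k}^{p}$) cannot match the cotangent space, and $\Omega$ is only weakly non-degenerate.

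The most delicate point will be the precise identification of $T^{*}_{\gamma} \mathsf{L}_{k}^{p}(\mathbb{S}^{1}, M)$ with $\mathsf{L}_{k}^{q}(\gamma^{*}T^{*}M)$ via the $L^{2}$-pairing and Theorem \ref{dualLp}; once this identification is in place, the comparison reduces to standard inclusions between $\mathsf{L}^{p}$ and $\mathsf{L}^{q}$ on a compact parameter, combined with the fiberwise invertibility of $\omega^{\flat}$.
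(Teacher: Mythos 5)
Your division of the proof matches the paper's, and two of the three pieces are sound. For weak non-degeneracy your argument is essentially the paper's: both localize near a point where $X$ does not vanish (using $k>0$ so that $X$ is continuous via the Sobolev embedding), produce a smooth test section $Y$ supported there, and contradict $\Omega_{\gamma}(X,Y)=0$; the paper builds $Y$ so that $\omega_{\gamma(t)}(X(t),Y(t))>0$ on an interval rather than invoking the fundamental lemma of the calculus of variations, but this is the same idea. For smoothness and closedness you take a genuinely different route: the paper verifies smoothness through the convenient-calculus curve criterion (testing along smooth curves into $U\times(\mathsf{L}_{k}^{p}(\mathbb{S}^{1},\mathbb{R}^{m}))^{2}$ and differentiating under the integral sign) and proves $d\Omega=0$ by an explicit Cartan-formula computation reducing $d\Omega(U_{0},U_{1},U_{2})$ to $\int_{\mathbb{S}^{1}}d\omega_{\gamma(t)}(U_{0}(t),U_{1}(t),U_{2}(t))\,dt$. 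Your formulation $\Omega=\int_{\mathbb{S}^{1}}\mathrm{ev}^{*}\omega$ packages both facts elegantly, but the commutation of fiber integration with $d$ over an infinite-dimensional base is precisely the content of the paper's hand computation, so you would still have to supply that verification (or a reference valid in the Banach/convenient setting).

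The genuine gap is in the duality discussion for the ``strong iff $p=2$'' part. You identify $T_{\gamma}^{*}\mathsf{L}_{k}^{p}(\mathbb{S}^{1},M)$ with $\mathsf{L}_{k}^{q}(\gamma^{*}T^{*}M)$ ``through the $L^{2}$-integration''. By the paper's own Theorem \ref{dualLp} and the end of Section \ref{LkpRm}, the bracket pairing identifies $(\mathsf{L}_{k}^{p})^{*}$ with the \emph{completion} of $\mathsf{L}^{q}$ in the norm $||\;||_{-k,q}$, a negative-order space in which $\mathsf{L}_{k}^{q}$ sits as a dense proper subspace when $k>0$; the identification you rely on is therefore not available as stated. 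This does not hurt your conclusion for $p\neq 2$ --- the image of $\Omega_{\gamma}^{\flat}$ consists of order-zero functionals with density in $\mathsf{L}_{k}^{p}$ and is a fortiori a proper subspace of the dual --- but the paper's argument for that direction is simpler and avoids computing the image altogether: if $\Omega$ were strong, $\Omega_{\gamma}^{\flat}$ would make $\mathsf{L}_{k}^{p}(\mathbb{S}^{1},\mathbb{R}^{m})$ isomorphic to its dual, which holds only for $p=2$. For the converse, your claim that fiberwise application of $\omega^{\flat}$ gives a bundle isomorphism $\mathsf{L}_{k}^{2}(\gamma^{*}TM)\to\mathsf{L}_{k}^{2}(\gamma^{*}T^{*}M)$ is true, but it yields surjectivity of $\Omega_{\gamma}^{\flat}$ onto the cotangent space only once you specify which isomorphism $T_{\gamma}^{*}\mathsf{L}_{k}^{2}\cong\mathsf{L}_{k}^{2}$ is in force; the paper does this by invoking the Hilbert-space (Riesz) identification and writing the duality bracket as the integral pairing, and your write-up needs that explicit step, since under the plain $L^{2}$ pairing the space $\mathsf{L}_{k}^{2}(\gamma^{*}T^{*}M)$ is again only a dense subspace of the dual for $k>0$.
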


\begin{proof}
At first, it is clear that $\Omega_{\gamma}$ is a bilinear skew symmetric
$2$-form on $T_{\gamma}\mathsf{L}_{k}^{p}(\mathbb{S}^{1},\mathbb{R}^{m})$.
Since the smoothness of $\gamma\mapsto\Omega_{\gamma}$ is a local property, we
consider a chart $(U,\Phi)$ around $\gamma$ on $\mathsf{L}_{k}^{p}%
(\mathbb{S}^{1},M)$. Note that over $U$, the Banach bundle of bilinear skew
symmetric forms is trivial. Therefore, without loss of generality, we may
assume that $U$ is an open set of $\mathsf{L}_{k}^{p}(\mathbb{S}%
^{1},\mathbb{R}^{m})$ and the previous bundle is
\[
U\times\Lambda^{2}(\mathsf{L}_{k}^{p}(\mathbb{S}^{1},\mathbb{R}^{m})^{\ast
}\equiv U\times\mathsf{L}_{k}^{p}(\mathbb{S}^{1},\Lambda^{2}\mathbb{R}^{m}).
\]
Then the set of smooth section of this bundle can be identified with smooth
map from $U$ to $\mathsf{L}_{k}^{p}(\mathbb{S}^{1},\Lambda^{2}\mathbb{R}%
^{m}))$. First, we prove that the map $(\gamma,X,Y)\mapsto\Omega_{\gamma
}(X,Y)$ is a smooth map from $U\times(\mathsf{L}_{k}^{p}(\mathbb{S}%
^{1},\mathbb{R}^{m}))^{2}$ into $\mathbb{R}$. From Corollary 4.13 in
\cite{KrMi}, it is sufficient to prove that for any smooth curve
$\delta:\mathbb{R}\rightarrow U\times(\mathsf{L}_{k}^{p}(\mathbb{S}^{1}%
,R^{m}))^{2}$ the map
\[
\tau\mapsto\int_{\mathbb{S}^{1}}\omega_{\delta_{0}(\tau)(t)}(X_{\delta
_{1}(\tau)(t)},Y_{\delta_{2}(\tau)(t)})dt
\]
is a smooth map from $\mathbb{R}$ to $\mathbb{R}$, where we have the
decomposition
\[
\delta(\tau)=(\delta_{0}(\tau),\delta_{1}(\tau),\delta_{2}(\tau))\in
U\times(\mathsf{L}_{k}^{p}(\mathbb{S}^{1},\mathbb{R}^{m}))^{2}.
\]
Indeed, for any $t\in\mathbb{S}^{1}$, the map $\tau\mapsto\omega_{\delta
(\tau)(t)}(X_{\delta(\tau)(t)},Y_{\delta(\tau)(t)})$ is smooth and so from the
properties of the integral of a function depending of a parameter, we obtain
the smoothness of $\tau\mapsto\int_{\mathbb{S}^{1}}\omega_{\delta(\tau
)(t)}(X_{\delta(\tau)(t)},Y_{\delta(\tau)(t)})dt$. This implies the smoothness
of $\gamma\mapsto\Omega_{\gamma}$ from \cite{KrMi}, theorem 3.12.\newline
Now we show that $\Omega$ is closed. From Cartan formulae we have
\[
d\Omega(U_{0},U_{1},U_{2})=\sum_{j=0}^{2}(-1)^{j}U_{j}\left(  \Omega
(U_{0},\widehat{U}_{j},U_{2})\right)  +\sum_{0\leq l<j\leq2}\Omega
([U_{l},U_{j}],U_{0},\widehat{U}_{j},U_{2}).
\]
Fix some $\gamma\in\mathsf{L}_{k}^{p}(\mathbb{S}^{1},\mathbb{R}^{m})$.
Consider a map $\sigma:(]-\varepsilon,\varepsilon\lbrack)^{3}\times
\mathbb{S}^{1}\rightarrow M$ with the following properties:

\begin{description}
\item[(i)] $\sigma(0,0,0,t)=\gamma(t)$;

\item[(ii)] $\forall j\in\left\{  0,1,2\right\}  ,\dfrac{\partial\sigma
}{\partial u_{j}}(0,0,0,t)=U_{j}(t)$.
\end{description}

We set $\gamma^{\prime}=\dfrac{\partial\sigma}{\partial t}$ and $U_{j}%
=\dfrac{\partial\sigma}{\partial u_{j}}$, for $j=0,1,2$. Note that by
construction, we have
\[
\forall j\in\left\{  0,1,2\right\}  ,\ [U_{l},U_{j}]=0\;\;.
\]
Now we have $U_{0}\{\Omega(U_{1},U_{2})\}=\dfrac{\partial\sigma}{\partial
u_{0}}\int_{\mathbb{S}^{1}}\omega_{\sigma(0,0,0,t)}(\dfrac{\partial\sigma
}{\partial u_{1}}(0,u_{1},0,t),\dfrac{\partial\sigma}{\partial u_{2}%
}(0,0,u_{2},t))dt$ $\;\;\;\;\;\;\;\;\;\;\;\;\;\;\;\;\;\;\;\;\;\;\;\;=\int
_{\mathbb{S}^{1}}\dfrac{\partial\sigma}{\partial u_{0}}\{\omega_{\sigma
(0,0,0,t)}(\dfrac{\partial\sigma}{\partial u_{1}}(0,u_{1},0,t),\dfrac
{\partial\sigma}{\partial u_{2}}(0,0,u_{2},t))\}dt.$\newline 
Using the same calculus for the other terms in the first sum, we obtain
\[
d\Omega(U_{0},U_{1},U_{2})=\int_{\mathbb{S}^{1}}d\omega_{\gamma(t)}%
(U_{0}(t),U_{1}(t),U_{2})(t)dt.
\]
Therefore, since $\omega$ is closed so is $\Omega$.\newline It remains to show
that $\Omega$ is weakly non degenerate. Assume that there exists $X\in
T_{\gamma}\mathsf{L}_{k}^{p}(\mathbb{S}^{1},\mathbb{R}^{m})$ such that for any
$Y\in T_{\gamma}\mathsf{L}_{k}^{p}(\mathbb{S}^{1},\mathbb{R}^{m})$ we have
$\Omega_{\gamma}(X,Y)=0$.\newline Since $k>0$, then we can assume that
$\gamma$ and $X$ are continuous (\textit{cf.} consequence of Theorem 4.1.1 in
\cite{Sch}). Now $X$ is not identically zero, so there exists some interval
$]\alpha,\beta\lbrack\subset\mathbb{S}^{1}$ on which $\omega_{\gamma
(t)}(X(t),\;)$ is a field of non zero $1$-forms. Let $J\subsetneqq
]\alpha,\beta\lbrack$ be a closed sub-interval. Since $\omega$ is symplectic,
it must exist a field of smooth vector fields $Y_{J}$ on $J$ such that
$\omega_{\gamma(t)}(X(t),Y_{j}(t))>0$. We can extend $Y_{J}$ to a smooth
vector field $Y$ on $\mathbb{S}^{1}$ so that the support of $Y$ is contained
in $]\alpha,\beta\lbrack$. Since $Y$ is smooth then $Y$ belongs to $T_{\gamma
}\mathsf{L}_{k}^{p}(\mathbb{S}^{1},\mathbb{R}^{m})$ and, by construction, we
have
\[
\Omega(X,Y)=\int_{\mathbb{S}^{1}}\omega_{\gamma(t)}(X(t),Y(t))dt>0.
\]
and we obtain a contradiction.\newline Now, if $\Omega$ strong symplectic
form, this implies that the Banach space $\mathsf{L}_{k}^{p}(\mathbb{S}%
^{1},\mathbb{R}^{m})$ is isomorphic to its dual which is only true for $p=2$.
Conversely, for $p=2$, the space $\mathsf{L}_{k}^{2}(\mathbb{S}^{1}%
,\mathbb{R}^{m})$ is a Hilbert space and so $T_{\gamma}^{\ast}\mathsf{L}%
_{k}^{2}(\mathbb{S}^{1},M)$ is isomorphic to $\mathsf{L}_{k}^{2}%
(\mathbb{S}^{1},\mathbb{R}^{m})$. Since $\omega$ is symplectic, $\omega
^{\flat}$ is a smooth isomorphism from $TM$ to $T^{\ast}M$. Therefore, if
$\eta\in T_{\gamma}^{\ast}\mathsf{L}(\mathbb{S}^{1},M)$, $X=(\omega^{\flat
})^{-1}(\eta)$ belongs to $T_{\gamma}\mathsf{L}_{k}^{2}(\mathbb{S}^{1},M)$
and, for any $Y\in T_{\gamma}\mathsf{L}_{k}^{2}(\mathbb{S}^{1},M),$ we have
\[
\omega_{\gamma(t)}(X(t),Y(t))=<\eta(t),Y(t)>_{M}%
\]
where $<\;,\;>_{M}$ is the duality bracket between $TM$ and $T^{\ast}M$. Now
the duality bracket between $T_{\gamma}^{\ast}\mathsf{L}_{k}^{2}%
(\mathbb{S}^{1},M)$ and $T_{\gamma}\mathsf{L}_{k}^{2}(\mathbb{S}^{1},M)$ is
then given by
\[
\int_{\mathbb{S}^{1}}<\xi(t),Z(t)>dt
\]
for any $\xi\in T_{\gamma}^{\ast}\mathsf{L}_{k}^{2}(\mathbb{S}^{1},M)$ and any
$Z\in T_{\gamma}\mathsf{L}_{k}^{2}(\mathbb{S}^{1},M)$. Therefore we have
\[
\Omega^{\flat}(X)(Y)=\eta(Y)
\]
which ends the proof.\newline
\end{proof}


\subsection{Darboux chart on $\mathsf{L}_{k}^{p}(\mathbb{S}^{1},M)$}${}$


We are now in situation to apply Theorem \ref{localDarboux} for the symplectic
form on $\mathsf{L}_{k}^{p}(\mathbb{S}^{1},M)$ as defined in the previous section.

\begin{theorem}
\label{darbouxO} There exists a Darboux chart $(V,F)$ around any $\gamma
\in\mathsf{L}_{k}^{p}(\mathbb{S}^{1},M)$ for the weak symplectic manifold
$(\mathsf{L}_{k}^{p}(\mathbb{S}^{1},M),\Omega)$.
\end{theorem}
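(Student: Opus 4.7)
The strategy is to apply the Local Darboux Theorem \ref{localDarboux} (equivalently, to check the conditions of Corollary \ref{equivDarboux}) at an arbitrary $\gamma_0 \in \mathsf{L}_k^p(\mathbb{S}^1, M)$. The model space $\mathbb{M} := \mathsf{L}_k^p(\mathbb{S}^1, \mathbb{R}^m)$ is reflexive by Theorem \ref{Lkp}, so the Bambusi setting of Section \ref{DarbouxBanach} applies, and it remains to verify the trivialization and extension conditions of Theorem \ref{Moserlemma} on a neighbourhood of $\gamma_0$.

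The first step is to identify the completion $\widehat{T_\gamma \mathsf{L}_k^p(\mathbb{S}^1, M)}$. For $X \in T_\gamma \mathsf{L}_k^p(\mathbb{S}^1, M) \equiv \mathsf{L}_k^p(\gamma^*TM)$, the functional $\Omega^\flat_\gamma(X) : Y \mapsto \int_{\mathbb{S}^1}\omega_{\gamma(t)}(X(t), Y(t))\,dt$ is represented, via the bracket duality of section \ref{LkpRm}, by the section $t \mapsto \omega^\flat_{\gamma(t)}(X(t))$ of $\gamma^* T^*M$, where $\omega^\flat$ is the pointwise (strong) symplectic flat map on the finite dimensional $M$. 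Reflexivity of $\mathbb{M}$ together with the results recalled at the end of section \ref{linearsymplectic} produces an isometric extension $\widehat{\Omega}^\flat_\gamma : \widehat{T_\gamma \mathsf{L}_k^p(\mathbb{S}^1, M)} \to T^*_\gamma \mathsf{L}_k^p(\mathbb{S}^1, M)$. In particular, the typical fiber $\widehat{\mathbb{M}}$ of the would-be completion bundle is isometric to $\mathbb{M}^* = (\mathsf{L}_k^p(\mathbb{S}^1, \mathbb{R}^m))^*$.

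For condition (i) of Theorem \ref{Moserlemma}: in a chart $(U_{\gamma_0}, \Phi_{\gamma_0})$ supplied by Theorem \ref{Lkp}, $T\Phi_{\gamma_0}$ gives a tangent trivialization $T\mathsf{L}_k^p(\mathbb{S}^1, M)_{|U_{\gamma_0}} \cong U_{\gamma_0} \times \mathbb{M}$, which dualises to a cotangent trivialization $T^*\mathsf{L}_k^p(\mathbb{S}^1, M)_{|U_{\gamma_0}} \cong U_{\gamma_0} \times \mathbb{M}^*$. Composing with the family of fiberwise isometries $(\widehat{\Omega}^\flat_\gamma)^{-1}$ (depending smoothly on $\gamma$ because $\omega^\flat$ is smooth on $M$) yields a trivialization $\widehat{T\mathsf{L}_k^p(\mathbb{S}^1, M)}_{|U_{\gamma_0}} \cong U_{\gamma_0} \times \widehat{\mathbb{M}}$ extending $T\Phi_{\gamma_0}$. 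For condition (ii): set $\widehat{\Omega}_\gamma(X, \widehat{Y}) := (\widehat{\Omega}^\flat_\gamma(\widehat{Y}))(X)$, which defines a continuous bilinear form on $T_\gamma \times \widehat{T_\gamma}$ that reduces to $\Omega_\gamma$ when $\widehat{Y} \in T_\gamma$; smoothness of $\gamma \mapsto \widehat{\Omega}_\gamma$ follows from smoothness of $\gamma \mapsto \Omega^\flat_\gamma$, itself a consequence of Theorem \ref{formO} and the smoothness of $\omega^\flat$ on $M$.

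The main obstacle is showing that the constructed trivialization of the completion bundle is genuinely an extension of $T\Phi_{\gamma_0}$ compatible with the inclusions $T \hookrightarrow \widehat{T}$ and $\mathbb{M} \hookrightarrow \widehat{\mathbb{M}}$; this reduces to a local equivalence of the norms $\|\cdot\|_{\Omega_\gamma}$ as $\gamma$ varies in $U_{\gamma_0}$, which follows from the continuous dependence of $\Omega^\flat_\gamma$ on $\gamma$, the isomorphism property of $\Omega^\flat_{\gamma_0}: \mathbb{M} \to \widehat{\mathbb{M}}^*$, and Remark \ref{indnorm}. Once conditions (i) and (ii) are verified, Theorem \ref{localDarboux} produces the desired Darboux chart $(V, F)$ around $\gamma_0$.
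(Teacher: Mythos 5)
Your overall architecture coincides with the paper's: verify hypotheses (i) and (ii) of Theorem \ref{Moserlemma} on a chart around $\gamma_0$ and invoke Theorem \ref{localDarboux}, and your identification of $\Omega^\flat_\gamma(X)$ with the section $t\mapsto\omega^\flat_{\gamma(t)}(X(t))$ via the bracket duality is exactly the right starting point. The gap is in the step you yourself flag as ``the main obstacle''. You claim that the local equivalence of the norms $\|\cdot\|_{\Omega_\gamma}$ for $\gamma$ near $\gamma_0$ follows from the continuous dependence of $\Omega^\flat_\gamma$ on $\gamma$, the isomorphism property of $\Omega^\flat_{\gamma_0}\colon\mathbb{M}\to\widehat{\mathbb{M}}^{\ast}$, and Remark \ref{indnorm}. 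This cannot be a proof: $\omega^\flat_{x}$ is an isomorphism onto $(\widehat{T_xM})^{\ast}$ at \emph{every} point of \emph{any} weak symplectic manifold with reflexive model (Bambusi's Lemma 2.8, recalled at the end of Section \ref{linearsymplectic}), and $x\mapsto\omega^\flat_x$ is smooth into $\mathcal{L}(\mathbb{M},\mathbb{M}^{\ast})$; if these facts together with Remark \ref{indnorm} implied local equivalence of the norms $\|\cdot\|_{\omega_x}$, then condition (i) would hold automatically for every such manifold, contradicting Marsden's counterexample (Example \ref{nochartDarboux}), which lives on a Hilbert space. Quantitatively, continuity of $\gamma\mapsto\Omega^\flat_\gamma$ in $\mathcal{L}(\mathbb{M},\mathbb{M}^{\ast})$ only gives $\bigl|\,\|X\|_{\Omega_\gamma}-\|X\|_{\Omega_{\gamma_0}}\bigr|\le\|\Omega^\flat_\gamma-\Omega^\flat_{\gamma_0}\|^{\mathrm{op}}\,\|X\|_{k,p}$, and the strong norm $\|X\|_{k,p}$ is not controlled by the weak norm $\|X\|_{\Omega_{\gamma_0}}$, so no equivalence follows.

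What closes the gap --- and what the paper's proof actually does --- is to exploit the pointwise representation you already wrote down. Since $M$ is finite dimensional and $\omega$ is a genuine symplectic form on it, $\omega^\flat_y$ and $(\omega^\flat_y)^{-1}$ are bounded uniformly for $y$ in a compact set containing the images of all loops in a chart neighbourhood $U$ of $\gamma_0$. Hence $X\mapsto\omega^\flat_{\gamma(\cdot)}(X(\cdot))$ is, uniformly in $\gamma\in U$, an isomorphism of $\mathsf{L}^{q}(\mathbb{S}^{1},\mathbb{R}^{m})$ onto itself, and $\|X\|_{\Omega_\gamma}$ is equivalent, with constants independent of $\gamma\in U$, to the single fixed norm $\|X\|_{-k,q}$. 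This is what makes the completion $\widehat{T_\gamma}$ independent of $\gamma$ and yields the trivial bundle $\widehat{TU}=U\times\widehat{\mathsf{L}_{k}^{p}}(\mathbb{S}^{1},\mathbb{R}^{m})_{\gamma_0}$ extending $T\Phi$; condition (ii) and the remainder of your argument then go through essentially as you describe (the paper also treats $p=2$ separately as the strong case and, for $1<p<2$, replaces $\mathsf{L}_{k}^{p}$ by $\mathsf{L}_{k}^{p}\cap\mathsf{L}^{q}$ in the density argument).
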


\begin{proof}
At first note that if $p=2$, then $\Omega$ is a strong symplectic form and
then the result is an application of the result of \cite{Ma1} and \cite{Wei}
(or also Theorem \ref{localDarboux}). Thus, from now, we assume $1<p<\infty$
and $p\not =2$.\newline Fix some $\gamma\in\mathsf{L}_{k}^{p}(\mathbb{S}%
^{1},M)$ and consider a chart $(U,\Phi)$ around $\gamma$. It is well known
that the maps $T\phi:T\mathsf{L}_{k}^{p}(\mathbb{S}^{1},M)_{|U}\rightarrow
\Phi(U)\times\mathsf{L}_{k}^{p}(\mathbb{S}^{1},\mathbb{R}^{m})$ and $T^{\ast
}\Phi^{-1}:T^{\ast}\mathsf{L}_{k}^{p}(\mathbb{S}^{1},M)_{|U}\rightarrow
\Phi(U)\times(\mathsf{L}_{k}^{p}(\mathbb{S}^{1},\mathbb{R}^{m}))^{\ast}$ are
the natural trivializations associated to $(U,\Phi)$. Without loss of
generality, we may assume that $U$ is an open subset of $\mathsf{L}_{k}%
^{p}(\mathbb{S}^{1},\mathbb{R}^{m})$. Then $TU=U\times\mathsf{L}_{k}%
^{p}(\mathbb{S}^{1},\mathbb{R}^{m})$ and $T^{\ast}U=U\times(\mathsf{L}_{k}%
^{p}(\mathbb{S}^{1},\mathbb{R}^{m}))^{\ast}$. \newline According to Remark
\ref{S1} and property (\ref{Lkp}), it follows that $\mathsf{L}_{k}%
^{p}(\mathbb{S}^{1},\mathbb{R}^{m})$ is dense in $\mathsf{L}^{p}%
(\mathbb{S}^{1},\mathbb{R}^{m})$. Either $1<p<2$ and then $p\leq q$, which
implies $\mathsf{L}^{q}(\mathbb{S}^{1},\mathbb{R}^{m})$ is a dense subspace of
$\mathsf{L}^{p}(\mathbb{S}^{1},\mathbb{R}^{m})$ and so $\mathsf{L}_{k}%
^{p}(\mathbb{S}^{1},\mathbb{R}^{m})\cap\mathsf{L}^{q}(\mathbb{S}%
^{1},\mathbb{R}^{m})$ is dense in $\mathsf{L}^{q}(\mathbb{S}^{1}%
,\mathbb{R}^{m})$. Otherwise, if $p>2$, then $\mathsf{L}^{p}(\mathbb{S}%
^{1},\mathbb{R}^{m})$ is a dense subspace of $\mathsf{L}^{q}(\mathbb{S}%
^{1},\mathbb{R}^{m})$ and so $\mathsf{L}_{k}^{p}(\mathbb{S}^{1},\mathbb{R}%
^{m})$ is dense in $\mathsf{L}^{q}(\mathbb{S}^{1},\mathbb{R}^{m})$. Thus we
will only consider the case $p>2$ since the other case is similar by replacing
$\mathsf{L}_{k}^{p}(\mathbb{S}^{1},\mathbb{R}^{m})$ by $\mathsf{L}_{k}%
^{p}(\mathbb{S}^{1},\mathbb{R}^{m})\cap\mathsf{L}^{q}(\mathbb{S}%
^{1},\mathbb{R}^{m})$. Since $TU=U\times\mathsf{L}_{k}^{p}(\mathbb{S}%
^{1},\mathbb{R}^{m})$, and $\mathsf{L}_{k}^{p}(\mathbb{S}^{1},\mathbb{R}^{m})$
is dense in $\mathsf{L}^{q}(\mathbb{S}^{1},\mathbb{R}^{m})$ we can consider
the trivial bundle
\[
L^{q}(U)=U\times\mathsf{L}^{q}(\mathbb{S}^{1},\mathbb{R}^{m}),
\]
and then the inclusion of $TU$ in $L^{q}(U)$ is a bundle morphism with dense
range.\newline 
On the other hand, according to the end of section \ref{LkpRm}
and Remark \ref{S1}, $\mathsf{L}^{q}(\mathbb{S}^{1},\mathbb{R}^{m})$ provided
with the norm $||\;||_{-k,p}$ is dense in
$(\mathsf{L}_{k}^{p}(\mathbb{S}^{1},\mathbb{R}^{m}))^{\ast}$. So we have an
inclusion of the trivial bundle $L^{q}(U)$ in $T^{\ast}U$ whose range is
dense.\newline Since $\omega$ is symplectic, $\omega_{\gamma}^{\flat}%
:\gamma^{\ast}(TM)\rightarrow\gamma^{\ast}(T^{\ast}M)$ is an isomorphism and
the map $X\in\mathsf{L}_{k}^{p}(\mathbb{S}^{1},\mathbb{R}^{m})\mapsto
\alpha=\omega_{\gamma(t)}^{\flat}(X(t)$ is nothing but that $\Omega_{\gamma
}^{\flat}$. For each fixed $\gamma$, according to the end of section
\ref{linearsymplectic}, we can extend the operator ${\Omega}_{\gamma}^{\flat
}:\mathsf{L}_{k}^{q}(\mathbb{S}^{1},\mathbb{R}^{m})\rightarrow(\mathsf{L}%
_{k}^{p}(\mathbb{S}^{1},\mathbb{R}^{m}))^{\ast}$ to a continuous operator
$(\bar{\Omega}_{\gamma}^{\flat})^{q}:\mathsf{L}^{q}(\mathbb{S}^{1}%
,\mathbb{R}^{m})\rightarrow(\mathsf{L}_{k}^{p}(\mathbb{S}^{1},\mathbb{R}%
^{m}))^{\ast}$. In fact this operator is given by
\[
\bar{\Omega}_{\gamma}^{\flat}(X(t))=\omega_{\gamma(t)}^{\flat}(X(t))
\]
and $\bar{\Omega}^{\flat}(X)$ belongs to $\mathsf{L}^{q}(\mathbb{S}%
^{1},\mathbb{R}^{m})$. Therefore the range of $\bar{\Omega}_{\gamma}^{\flat}$
is $\mathsf{L}^{q}(\mathbb{S}^{1},\mathbb{R}^{m})\subset(\mathsf{L}_{k}%
^{p}(\mathbb{S}^{1},\mathbb{R}^{m}))^{\ast}$ and is an isomorphism whose
inverse is given by
\[
(\bar{\Omega}^{\flat})^{-1}(\alpha(t))=(\omega_{\gamma(t)}^{\flat}%
)^{-1}(\alpha(t))
\]
Note that $\bar{\Omega}_{\gamma}^{\flat}$ is the natural morphism associated
to the skew symmetric bilinear form on $\mathsf{L}^{q}(\mathbb{S}%
^{1};\mathbb{R}^{m})$ defined by
\[
\bar{\Omega}_{\gamma}(X,Y)=\int_{\mathbb{S}^{1}}\omega_{\gamma(t)}%
(X(t),Y(t)dt
\]
which is an extension of the initial $2$-form $\Omega_{\gamma}$ on
$\mathsf{L}^{p}(\mathbb{S}^{1};\mathbb{R}^{m})$. Therefore, on $\mathsf{L}%
^{q}(\mathbb{S}^{1};\mathbb{R}^{m})$, considered as a subset of $(\mathsf{L}%
_{k}^{p}(\mathbb{S}^{1},\mathbb{R}^{m}))^{\ast}$, we can consider the induced
norm $||\;||_{-k,q}$. Since $\bar{\Omega}_{\gamma}$ is bijective, then
\[
||X||_{\bar{\Omega}_{\gamma}}=||\bar{\Omega}^{\flat}(X)||_{-k,p}%
\]
defines a norm on $\mathsf{L}^{q}(\mathbb{S}^{1};\mathbb{R}^{m})$ for all
$\gamma\in U$. This implies that $\bar{\Omega}_{\gamma}^{\flat}$ is a
continuous isomorphism and an isometry from the normed space $(\mathsf{L}%
^{q}(\mathbb{S}^{1};\mathbb{R}^{m}),||\;||_{\bar{\Omega}_{\gamma}})$ to the
normed space $(\mathsf{L}^{q}(\mathbb{S}^{1};\mathbb{R}^{m}),||\;||_{-k,q})$.
Therefore the norms $||\;||_{\bar{\Omega}_{\gamma}}$ and $||\;||_{-k,q}$ are
equivalent on $(\mathsf{L}^{q}(\mathbb{S}^{1};\mathbb{R}^{m})$ for any
$\gamma\in U$. Now, note that $||\;||_{\bar{\Omega}_{\gamma}}$ induces the
norm $||\;||_{{\Omega}_{\gamma}}$ on $T_{\gamma}U=\{\gamma\}\times
\mathsf{L}_{k}^{p}(\mathbb{S}^{1};\mathbb{R}^{m})$ as defined in section
\ref{DarbouxBanach}. Since each norm $||\;||_{\bar{\Omega}_{\gamma}}$ is
equivalent to $||\;||_{-k,p}$ then each norm $||\;||_{\bar{\Omega}_{\gamma}}$
is equivalent to $||\;||_{\bar{\Omega}_{\gamma_{0}}}$, it follows that the
same result is true for the restriction of this norm to $\mathsf{L}_{k}%
^{p}(\mathbb{S}^{1},\mathbb{R}^{m})$. Therefore, the completion of the normed
space $(\mathsf{L}_{k}^{p}(\mathbb{S}^{1},\mathbb{R}^{m}),||\;||_{\Omega
_{\gamma}})$ does not depend on $\gamma$. We denote by $\widehat
{\mathsf{L}_{k}^{p}}(\mathbb{S}^{1},\mathbb{R}^{m})_{\gamma}$ this completion.
According to the notations of section \ref{DarbouxBanach}, we have
$\widehat{TU}=U\times\widehat{\mathsf{L}_{k}^{p}}(\mathbb{S}^{1}%
,\mathbb{R}^{m})_{\gamma_{0}}$. This implies that the condition (i) of the
assumptions of Theorem \ref{localDarboux} is satisfied.\newline 
As we have seen in section \ref{linearsymplectic}, for each $\gamma$, we can extend the
operator
\[
\Omega^{\flat}:\mathsf{L}_{k}^{p}(\mathbb{S}^{1},\mathbb{R}^{m})\rightarrow
(\mathsf{L}_{k}^{p}(\mathbb{S}^{1},\mathbb{R}^{m}))^{\ast}%
\]
to an isometry $\widehat{\Omega}_{\gamma}^{\flat}$ from $\widehat
{\mathsf{L}_{k}^{p}}(\mathbb{S}^{1},\mathbb{R}^{m})_{\gamma}$ to
$(\mathsf{L}_{k}^{p}(\mathbb{S}^{1},\mathbb{R}^{m}))^{\ast}$. It remains to
show that $\gamma\mapsto{\Omega}_{\gamma}^{\flat}$ is a smooth field from $U$
to $\mathcal{L}((\mathsf{L}_{k}^{p}(\mathbb{S}^{1},\mathbb{R}^{m}%
)),(\widehat{\mathsf{L}_{k}^{p}}(\mathbb{S}^{1},\mathbb{R}^{m})_{\gamma_{0}%
})^{\ast})$.\newline This property can be shown by the same argument used in
Theorem \ref{formO}, for the smoothness of $\gamma\mapsto\Omega_{\gamma}$.
Therefore the assumptions for applying Theorem \ref{localDarboux} are
satisfied on $U$ and this ends the proof.\newline
\end{proof}


\subsection{Moser's method on $\mathsf{L}_{k}^{p}(\mathbb{S}^{1},M)$}${}$


In this section, denote by $I$ the interval $[0,1]$. According to \cite{Ku2},
an \textit{isotopy in a Banach manifold $\mathcal{M}$} is a smooth map
$F:I\times\mathcal{M}\longrightarrow\mathcal{M}$ such that for all $s\in I$
the induced map $F_{s}:\mathcal{M}\longrightarrow\mathcal{M}$ is a
diffeomorphism with $F_{0}=Id$.

Assume that we have an isotopy $F:I\times M\longrightarrow M$. Let ${F}%
^{L}:I\times\mathsf{L}_{k}^{p}(\mathbb{S}^{1},M)\rightarrow\mathsf{L}_{k}%
^{p}(\mathbb{S}^{1},M)$ be the map defined by
\[
{F}^{L}(s,\gamma):t\mapsto F_{s}(\gamma(t)).
\]

Using the same argument as in section 2 of \cite{Ku2}, we have

\begin{proposition}
\label{isotopie} ${F}^{L}$ is an isotopy in $\mathsf{L}_{k}^{p}(\mathbb{S}%
^{1},M)$. If $X\in T_{\gamma}\mathsf{L}_{k}^{p}(\mathbb{S}^{1},M)\equiv
\mathsf{L}_{k}^{p}(\mathbb{S}^{1},\mathbb{R}^{m})$ then the differential of
${F}^{L}_{s}$ is the map $t\mapsto D_{\gamma}F_{s}(X(t))$
\end{proposition}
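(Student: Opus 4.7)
The plan is to verify in turn that $F^L$ is well defined, that each $F^L_s$ is a diffeomorphism, that $F^L$ depends smoothly on $(s,\gamma)$, and finally to read off the derivative by a pointwise chain rule.

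First I would show that $F^L_s$ sends $\mathsf{L}_k^p(\mathbb{S}^1,M)$ into itself. This is essentially the $\omega$-lemma: postcomposition by the smooth map $F_s\colon M\rightarrow M$ preserves the Sobolev class of loops, a consequence of the local description given in the proof of Theorem \ref{Lkp} (compare \cite{Sch}, Theorems 4.3.5 and 4.4.3). More precisely, for $\gamma\in U_{\gamma_0}$ and for a smooth loop $\gamma_1$ close enough to $F_s\circ\gamma_0$, one can read $F_s\circ\gamma$ in the chart $(U_{\gamma_1},\Phi_{\gamma_1})$ as
\[
\Phi_{\gamma_1}(F_s\circ\gamma)(t)=\mathcal{F}^{-1}\bigl(\gamma_1(t),\,F_s(\exp_{\gamma_0(t)}X(t))\bigr),
\]
with $X=\Phi_{\gamma_0}(\gamma)\in\mathsf{L}_k^p(\mathbb{S}^1,\mathbb{R}^m)$; since the map on $\mathbb{R}^m$ inside the parentheses is smooth in $(t,X(t))$, the composition lies in $\mathsf{L}_k^p(\mathbb{S}^1,\mathbb{R}^m)$.

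Second, I would establish smoothness of $F^L\colon I\times\mathsf{L}_k^p(\mathbb{S}^1,M)\rightarrow\mathsf{L}_k^p(\mathbb{S}^1,M)$. By \cite{KrMi} Corollary 4.13, it suffices to check that $F^L$ sends smooth curves to smooth curves, and since smoothness is local it is enough to do this in the charts $(U_{\gamma_0},\Phi_{\gamma_0})$ and $(U_{\gamma_1},\Phi_{\gamma_1})$ as above. There $F^L$ becomes the operator
\[
(s,X)\longmapsto\bigl(t\mapsto h(s,t,X(t))\bigr),\qquad h(s,t,v):=\mathcal{F}^{-1}\bigl(\gamma_1(t),\,F_s(\exp_{\gamma_0(t)}v)\bigr),
\]
with $h$ smooth on $I\times\mathbb{S}^1\times\Omega$ for a suitable open $\Omega\subset\mathbb{R}^m$. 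This is exactly the kind of Nemytskii/composition operator whose smoothness on $\mathsf{L}_k^p$ follows from the $\omega$-lemma (the same mechanism used in \cite{Sch} and in \cite{Ku2}, section 2). The same argument, applied to the smooth isotopy $(s,x)\mapsto (F_s)^{-1}(x)$, shows that $((F_s)^{-1})^L$ is a smooth inverse of $F^L_s$, and since $F^L_0=\mathrm{Id}$, each $F^L_s$ is a diffeomorphism of $\mathsf{L}_k^p(\mathbb{S}^1,M)$.

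Finally, the differential formula is obtained by the chain rule. Represent $X\in T_\gamma\mathsf{L}_k^p(\mathbb{S}^1,M)$ as the velocity at $\tau=0$ of a smooth curve $\tau\mapsto\gamma_\tau$ with $\gamma_0=\gamma$. Then for each fixed $t\in\mathbb{S}^1$,
\[
\bigl(D_\gamma F^L_s(X)\bigr)(t)=\left.\frac{d}{d\tau}\right|_{\tau=0}F_s\bigl(\gamma_\tau(t)\bigr)=D_{\gamma(t)}F_s\bigl(X(t)\bigr),
\]
as claimed. The main obstacle is the second step, namely the smoothness of the pointwise composition operator as a map between the Banach spaces $\mathsf{L}_k^p(\mathbb{S}^1,\mathbb{R}^m)$; once the $\omega$-lemma from \cite{Sch} is invoked, the rest is a direct verification in local charts.
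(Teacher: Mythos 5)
Your proof is correct and follows the same overall strategy as the paper: work in the charts $(U_{\gamma},\Phi_{\gamma})$ built from the exponential map, reduce smoothness of $F^{L}$ to the convenient-calculus criterion that smooth curves be sent to smooth curves, apply the identical argument to $(F_{s})^{-1}$ to get a smooth inverse, and read off the differential pointwise (the paper delegates this last step to Proposition 2.1 of \cite{Ku2}, which is exactly your chain-rule computation). The one place where you diverge is the key smoothness step for the localized composition operator $(s,X)\mapsto\bigl(t\mapsto h(s,t,X(t))\bigr)$: you invoke the $\omega$-lemma for Nemytskii operators on $\mathsf{L}_{k}^{p}$, whereas the paper instead tests the curve $\tau\mapsto\delta(\tau)$ against the explicit description of the dual space $(\mathsf{L}_{k}^{p}(\mathbb{S}^{1},\mathbb{R}^{m}))^{\ast}$ given by Theorem \ref{dualLp}, i.e.\ checks that $\tau\mapsto\sum_{0\leq|i|\leq k}\langle D^{i}\delta,g_{i}\rangle(\tau)$ is smooth for every $g$, and then appeals to \cite{KrMi} Theorem 2.14. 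Your route via the $\omega$-lemma is the more standard mechanism (it is also what underlies the construction of the manifold structure in \cite{Sch}) and makes the dependence on the hypothesis $k>0$ (hence the embedding into $C^{0}$) explicit; the paper's route exploits the reflexive $\mathsf{L}_{k}^{p}$ duality directly and stays closer to the weak-smoothness formalism of the convenient setting. Both are legitimate, and your first step (that postcomposition preserves the Sobolev class at all, so that $F^{L}_{s}$ is well defined) is actually spelled out more carefully in your version than in the paper's.
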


\begin{proof}Consider a smooth curve $c:\mathbb{R}\rightarrow I\times\mathsf{L}_{k}%
^{p}(\mathbb{S}^{1},M)$ which is denoted $(c_{1}(\tau),c_{2}(\tau))$. Consider
the curve $\tilde{c}:\mathbb{R}\rightarrow\mathsf{L}_{k}^{p}(\mathbb{S}%
^{1},M)$ given by
\[
\tilde{c}(\tau)=F^{L}\circ c(\tau)=F_{c_{1}(\tau)}(c_{2}(\tau)(t))
\]
According to the proof of Theorem \ref{Lkp}, there exists a chart $(U,\Phi)$
such that $\tilde{c}$ belongs to $U$ and then $\Phi(\tilde{c})$ belongs to
$\mathsf{L}_{k}^{p}(\mathbb{S}^{1},\mathbb{R}^{m})$. Therefore $\tilde{c}$ is
smooth if and only if $\Phi\circ\tilde{c}:\mathbb{R}\rightarrow\mathsf{L}%
_{k}^{p}(\mathbb{S}^{1},\mathbb{R}^{m})$ is smooth. Now a curve $\tau
\mapsto\delta$ in the Banach space $\mathsf{L}_{k}^{p}(\mathbb{S}%
^{1},\mathbb{R}^{m})$ is smooth if for any $\lambda\in(\mathsf{L}_{k}%
^{p}(\mathbb{S}^{1},\mathbb{R}^{m}))^{\ast}$ the curve $\tau\mapsto
\lambda(\delta)(\tau)$ is a smooth curve from $\mathbb{R}$ to $\mathbb{R}$
(\textit{cf.} \cite{KrMi}, Theorem 2.14). But from Theorem \ref{dualLp},
$\delta$ will be smooth if for any $g\in\mathsf{L}_{k}^{q}((\mathbb{S}%
^{1})^{(k)},\mathbb{R}^{n})$ the curve $\tau\mapsto\sum\limits_{0\leq|i|\leq
k}<D^{i}\delta,g_{i}>(\tau)$ is smooth.\newline For $\delta(\tau
)=F(\phi_{c_{1}(\tau)}(c_{2}(\tau)(t))$, since $c_{1}$, $c_{2}$, $\phi$ and
$\Phi$ are smooth, according to the definition on the "dual bracket", it follows that, for any $g\in\mathsf{L}_{k}^{q}%
((\mathbb{S}^{1})^{(k)},\mathbb{R}^{n})$, the curve $\tau\mapsto
\sum\limits_{0\leq|i|\leq k}<D^{i}\delta,g_{i}>(\tau)$ is smooth. This implies
that ${F^{L}}$ is smooth (\textit{cf.} \cite{KrMi}). Since $F_{s}$ is a
diffeomorphism, its inverse $G_{s}$ is defined and, in the same way, we can
show that ${G^{L}}$ is smooth. Finally, we obtain that ${F^{L}}$ is an isotopy
in $\mathsf{L}_{k}^{p}(\mathbb{S}^{1},M)$. The proof of the last part is point
by point the same as the proof in Proposition 2.1 of \cite{Ku2}.\newline
\end{proof}

\begin{remark}
\label{diffeo} \normalfont If we consider a smooth diffeomorphism $F$ of a manifold $M$,
by the same arguments as in the proof of Proposition \ref{isotopie}, we can
show that the map $F^{L}:\mathsf{L}_{k}^{p}(\mathbb{S}^{1},\mathbb{R}%
^{m})\rightarrow\mathsf{L}_{k}^{p}(\mathbb{S}^{1},\mathbb{R}^{m})$ defined by
$F^{L}(\gamma)=F\circ\gamma$ is a smooth diffeomorphism of $\mathsf{L}_{k}%
^{p}(\mathbb{S}^{1},\mathbb{R}^{m})$.
\end{remark}

Assume that $\{\omega^{s}\}_{s\in\lbrack0,1]}$ is a $1$-parameter family of
symplectic forms on $M$ which smoothly depends on $s$. To $\omega^{s}$ is
associated a symplectic form $\Omega^{s}$ defined by the relation
(\ref{defiO}). Then $\{\Omega^{s}\}_{s\in\lbrack0,1]}$ is also a $1$-parameter
family of symplectic forms on $\mathsf{L}_{k}^{p}(\mathbb{S}^{1},M)$ which
smoothly depends on $t$. As in \cite{Ku2}, we have

\begin{theorem}
\label{isopdarboux} Just like before, consider the $1$-parameter family of
symplectic forms $\{\omega^{s}\}_{s\in\lbrack0,1]}$ on $M$. Assume that there
exists an isotopy $F:I\times M\longrightarrow M$ such that $F_{s}^{\ast}%
\omega^{s}=\omega^{0}$, then $({F}_{s}^{L})^{\ast}(\Omega^{s})=\Omega^{0}$.
\end{theorem}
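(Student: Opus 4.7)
The plan is to reduce the identity $(F_s^L)^*\Omega^s = \Omega^0$ to a pointwise (in $t\in\mathbb{S}^1$) application of the hypothesis $F_s^*\omega^s=\omega^0$, by unraveling the definitions under the integral sign. Everything I need is already in place: the formula (\ref{defiO}) defining $\Omega^s$ from $\omega^s$, the description of the tangent map of $F_s^L$ given in Proposition~\ref{isotopie}, and the standard commutation of pullback with pointwise evaluation.

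First, fix $\gamma\in\mathsf{L}_{k}^{p}(\mathbb{S}^{1},M)$ and two tangent vectors $X,Y\in T_\gamma\mathsf{L}_{k}^{p}(\mathbb{S}^{1},M)$, viewed as $\mathsf{L}_{k}^{p}$-sections of $\gamma^{\ast}TM$. By the definition of pullback of a $2$-form, I would write
\[
\bigl((F_s^L)^{\ast}\Omega^s\bigr)_\gamma(X,Y)
=\Omega^s_{F_s^L(\gamma)}\!\bigl(T_\gamma F_s^L(X),\;T_\gamma F_s^L(Y)\bigr).
\]
By Proposition~\ref{isotopie}, we have $F_s^L(\gamma)(t)=F_s(\gamma(t))$ and $T_\gamma F_s^L(X)(t)=D_{\gamma(t)}F_s(X(t))$, and likewise for $Y$. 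Substituting into the defining formula (\ref{defiO}) for $\Omega^s$ gives
\[
\bigl((F_s^L)^{\ast}\Omega^s\bigr)_\gamma(X,Y)
=\int_{\mathbb{S}^1}\omega^s_{F_s(\gamma(t))}\!\bigl(D_{\gamma(t)}F_s(X(t)),\,D_{\gamma(t)}F_s(Y(t))\bigr)\,dt.
\]

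Now the integrand is precisely $(F_s^{\ast}\omega^s)_{\gamma(t)}(X(t),Y(t))$. Invoking the hypothesis $F_s^{\ast}\omega^s=\omega^0$ pointwise on $M$, the integrand equals $\omega^0_{\gamma(t)}(X(t),Y(t))$, whence
\[
\bigl((F_s^L)^{\ast}\Omega^s\bigr)_\gamma(X,Y)
=\int_{\mathbb{S}^1}\omega^0_{\gamma(t)}(X(t),Y(t))\,dt
=\Omega^0_\gamma(X,Y),
\]
which is the claimed equality. Since $\gamma$, $X$ and $Y$ were arbitrary, this yields $(F_s^L)^{\ast}\Omega^s=\Omega^0$ as forms on $\mathsf{L}_{k}^{p}(\mathbb{S}^1,M)$.

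There is no real obstacle: the only nontrivial content is Proposition~\ref{isotopie}, which identifies the derivative of $F_s^L$ at the level of loops with the pointwise derivative of $F_s$. The one point I would double check is that the substitution under the integral is legitimate in the $\mathsf{L}_k^p$ setting, i.e.\ that $t\mapsto \omega^s_{F_s(\gamma(t))}(D_{\gamma(t)}F_s(X(t)),D_{\gamma(t)}F_s(Y(t)))$ is integrable on $\mathbb{S}^1$; this follows from the fact that $X,Y\in\mathsf{L}_k^p$ (hence in $C^0$ since $k>0$, by the embedding used in the proof of Theorem~\ref{formO}), $\gamma$ is continuous, and $F_s$ together with $DF_s$ and $\omega^s$ are smooth on the compact loop $\gamma(\mathbb{S}^1)$.
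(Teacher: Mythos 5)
Your proof is correct and is essentially the argument the paper intends: the paper does not write the computation out but states that, using Proposition~\ref{isotopie}, the proof is formally the same as that of Theorem 4.1 of \cite{Ku2}, which is precisely your pointwise substitution under the integral combined with the identification $T_\gamma F_s^L(X)(t)=D_{\gamma(t)}F_s(X(t))$. Your closing remark on integrability of the integrand is a reasonable (and harmless) extra check.
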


Using Proposition \ref{isotopie}, the proof of Theorem \ref{isopdarboux} is
formally the same as the proof of Theorem 4.1 of \cite{Ku2}

\begin{remark}
\label{diffodarboux} \normalfont Let $F$ be a diffeomorphism on $M$ and $\omega_{0}$ and
$\omega_{1}$ be two symplectic forms on $M$ such that $F^{*}\omega_{1}%
=\omega_{0}$. For $i=1,2$, if $\Omega_{i}$ is the symplectic form on
$\mathsf{L}_{k}^{p}(\mathbb{S}^{1},M)$ defined from $\omega_{i}$ by the
relation (\ref{defiO}), using the same arguments as in the proof of Theorem
4.1 of \cite{Ku2}, we show that $(F^{L})^{*}\Omega_{1}=\Omega_{0}$. In
particular, this implies that if $V$ contains $\gamma(I)$ and $(V,\Phi)$ is a
Darboux chart for $\omega$ we obtain by this way a Darboux chart for $\Omega$
in $\mathsf{L}_{k}^{p}(\mathbb{S}^{1},M)$.
\end{remark}






\subsection{ A symplectic form on the direct limit of an ascending sequence of Sobolev manifolds of loops}${}$

\label{symplecasceloop}

We consider an ascending sequence $\{\left(  M_{n},\omega_{n}\right)
\}_{n\in\mathbb{N}^{\ast}}$ of finite dimensional symplectic manifolds such
that $\omega_{n}$ is the restriction of $\omega_{n+1}$ to $M_{n}$ for all
$n>0$. Then, according to the previous section, for each $n$, we denote by
$\Omega_{n}$ the weak symplectic form on $\mathsf{L}_{k}^{p}(\mathbb{S}%
^{1},M_{n})$ associated to $\omega_{n}$. It is clear that $\left(
\mathsf{L}_{k}^{p}(\mathbb{S}^{1},M_{n})\right)  _{n\in\mathbb{N}^{\ast}}$ is
an ascending sequence of Banach manifolds.

\begin{proposition}
\label{symplectic ascendingloop}$M=\underrightarrow{\lim}M_{n}$ is modelled on
the convenient space $\mathbb{R}^{\infty}$.\\
The direct limit $\mathsf{L}_{k}^{p}(\mathbb{S}^{1},M)=\underrightarrow{\lim}(\mathsf{L}_{k}%
^{p}(\mathbb{S}^{1},M_{n}))$ is contained in the set of continuous loops in
$\mathbb{R}^{\infty}$ and has a Hausdorff  convenient manifold structure modelled
on the convenient space $\mathsf{L}_{k}^{p}(\mathbb{S}^{1},\mathbb{R}^{\infty
})=\underrightarrow{\lim}(\mathsf{L}_{k}^{p}(\mathbb{S}^{1},\mathbb{R}^{m}))$.
Moreover, the sequence of symplectic forms $\left(  \Omega_{n}\right)  $ is
coherent and the induced symplectic form $\Omega$ on $\mathsf{L}_{k}^{p}(\mathbb{S}^{1},M)$ is characterized by:
\[
\Omega_{\gamma}(X,Y)=\int_{\mathbb{S}^{1}}\omega_{\gamma}(t)(X(t),Y(t))dt
\]
In fact, $(\mathsf{L}_{k}^{p}(\mathbb{S}^{1},M),\Omega)$ satisfies the
assumption of Proposition \ref{Darbouxascending}.
\end{proposition}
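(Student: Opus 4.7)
The plan breaks into four parts corresponding to the four assertions in the proposition. The first assertion, that $M=\underrightarrow{\lim}M_n$ is a convenient manifold modelled on $\mathbb{R}^{\infty}=\underrightarrow{\lim}\mathbb{R}^{m_n}$ (with $m_n=\dim M_n$), follows directly from the general results on direct limits of finite-dimensional manifolds recalled in Appendix A; nothing new has to be done here except to apply those results to the given ascending sequence.

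For the second assertion, I would first choose Riemannian metrics $g_n$ on $M_n$ so that $g_{n+1}$ restricts to $g_n$ (take any $g_n$ on $M_n$ and extend it to $M_{n+1}$, using a partition of unity), and then arrange the neighborhoods $\mathcal{U}_n\subset TM_n$ of the zero sections so that $\mathcal{U}_n=\mathcal{U}_{n+1}\cap TM_n$. With this coherence, for any smooth loop $\gamma\in C^\infty(\mathbb{S}^1,M)$ lying in some $M_{n_0}$, the Hausser charts $(U_\gamma^{(n)},\Phi_\gamma^{(n)})$ constructed in the proof of Theorem \ref{Lkp} for each $n\geq n_0$ form an ascending sequence satisfying the chart-limit property of Definition \ref{DLChartProperty}. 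Therefore $\mathsf{L}_k^p(\mathbb{S}^1,M):=\underrightarrow{\lim}\mathsf{L}_k^p(\mathbb{S}^1,M_n)$ inherits a convenient manifold structure modelled on $\underrightarrow{\lim}\mathsf{L}_k^p(\mathbb{S}^1,\mathbb{R}^{m_n})=\mathsf{L}_k^p(\mathbb{S}^1,\mathbb{R}^\infty)$. The Hausdorff property is transferred from the fact that each $\mathsf{L}_k^p(\mathbb{S}^1,M_n)$ is Hausdorff (Theorem \ref{Lkp}) and that the bonding maps are closed topological embeddings.

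For the coherence and integral formula for $\Omega$: if $\gamma\in\mathsf{L}_k^p(\mathbb{S}^1,M_n)$ and $X,Y\in T_\gamma\mathsf{L}_k^p(\mathbb{S}^1,M_n)$, then the inclusion into $\mathsf{L}_k^p(\mathbb{S}^1,M_{n+1})$ leaves $X,Y$ unchanged as sections, and since $\omega_{n+1}|_{TM_n}=\omega_n$ we obtain
\[
(\Omega_{n+1})_\gamma(X,Y)=\int_{\mathbb{S}^1}(\omega_{n+1})_{\gamma(t)}(X(t),Y(t))\,dt=\int_{\mathbb{S}^1}(\omega_n)_{\gamma(t)}(X(t),Y(t))\,dt=(\Omega_n)_\gamma(X,Y).
\]
Thus $(\Omega_n)$ is a coherent sequence in the sense of Definition preceding Proposition \ref{omegan}, so Theorem \ref{omega_ncoherentM} produces a symplectic form $\Omega=\underrightarrow{\lim}\Omega_n$ on $\mathsf{L}_k^p(\mathbb{S}^1,M)$. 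The integral formula is then immediate: given $\gamma\in\mathsf{L}_k^p(\mathbb{S}^1,M)$ and $X,Y\in T_\gamma\mathsf{L}_k^p(\mathbb{S}^1,M)$, pick $n_0$ so large that $\gamma,X,Y$ all sit in level $n_0$; the formula for $\Omega_{n_0}$ gives the claimed expression, and it is independent of the choice of $n_0\geq$ the minimal one by the coherence just proved.

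Finally, for the Bambusi--Darboux assumption, the verification is essentially already contained in the proof of Theorem \ref{darbouxO}: that proof shows that for each $n$ and each $\gamma\in\mathsf{L}_k^p(\mathbb{S}^1,M_n)$ there is a trivializing chart $(U_n,\Phi_n)$ on which the completed tangent bundle $\widehat{T\mathsf{L}_k^p(\mathbb{S}^1,M_n)}_{|U_n}$ is a trivial Banach bundle whose typical fiber is (up to equivalence of norms) the completion of $\mathsf{L}_k^p(\mathbb{S}^1,\mathbb{R}^{m_n})$ with respect to $\|\cdot\|_{\Omega_{n,\gamma}}$, and on which $\Omega_n$ extends to a smooth field of bilinear forms on $TM_n|_{U_n}\times \widehat{TM_n}|_{U_n}$. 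Using the coherent charts built in the second step, these $U_n$ can be chosen to form an ascending family $U=\underrightarrow{\lim}U_n$ giving a limit chart at $\gamma$, and the required conditions (i) and (ii) of Definition \ref{Darbouxhyp} hold level by level. The main technical point, and what I would expect to be the subtlest, is the compatibility of the chart constructions across $n$ so that the resulting family is actually a limit chart in the sense of the appendix; everything else is a bookkeeping application of the theorems already established.
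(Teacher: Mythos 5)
Your proposal follows essentially the same route as the paper's proof: coherent Riemannian metrics and exponential maps produce ascending chart domains giving the direct limit chart property, coherence of $(\Omega_n)$ follows from $\omega_{n+1}|_{M_n}=\omega_n$ together with Theorem \ref{omega_ncoherentM}, and the Bambusi--Darboux assumption is delegated to the proof of Theorem \ref{darbouxO}. The only slight divergence is that the paper obtains the Hausdorff property from the paracompactness of each $\mathsf{L}_{k}^{p}(\mathbb{S}^{1},M_{n})$ (via Proposition 15 of \cite{CaPe}) rather than from your assertion that the bonding maps are closed topological embeddings, but this does not alter the overall argument.
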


\begin{proof}
At first, from \cite{Glo}, the direct limit $M=\underrightarrow{\lim}M_{n}$ is
a (Hausdorff) convenient manifold modelled on the convenient space
$\mathbb{R}^{\infty}$.\\
Now, if $\gamma$ belongs to $\mathsf{L}_{k}%
^{p}(\mathbb{S}^{1},M)$, then $\gamma$ belongs to some $\mathsf{L}_{k}%
^{p}(\mathbb{S}^{1},M_{n})$ and so is a continuous loop in $M$. The direct
limit $\mathsf{L}_{k}^{p}(\mathbb{S}^{1},\mathbb{R}^{\infty})=\underrightarrow
{\lim}(\mathsf{L}_{k}^{p}(\mathbb{S}^{1},\mathbb{R}^{m}))$ is a convenient
space and we will use Theorem 40 of \cite {CaPe} to show that the direct limit $\mathsf{L}_{k}^{p}%
(\mathbb{S}^{1},M)=\underrightarrow{\lim}(\mathsf{L}_{k}^{p}(\mathbb{S}%
^{1},M_{n}))$ is a convenient manifold modelled on this space. Therefore, we must prove that  $\mathsf{L}_{k}^{p}%
(\mathbb{S}^{1},M)=\underrightarrow{\lim}(\mathsf{L}_{k}^{p}(\mathbb{S}%
^{1},M_{n}))$  has the direct chart limit property. More precisely,
 according to Proposition \ref{P_ConditionsDefiningDLCP}, we must show
that for any $\gamma=\underrightarrow{\lim}\gamma_{n}\in\mathsf{L}_{k}%
^{p}(\mathbb{S}^{1},M)$, there exists an ascending sequence of charts
$\lbrace(U_{n},\phi_{n})\rbrace_{n\in\mathbb{N^\ast}}$ around $\gamma_{n}$ and then $U=\underrightarrow{\lim}%
U_{n},\phi=\underrightarrow{\lim}\phi_{n}$ will be a chart around $\gamma$. We
use the context of the proof of Theorem \ref{Lkp}.\newline 
On each $M_{n}$, we choose a Riemannian metric $g_{n}$ such that the restriction of $g_{n+1}$ to
$M_{n}$ is $g_{n}$. We denote by $\exp_{n}$ the exponential map associated to
the Levi-Civita connection of $g_{n}$. For each $n$, we have an open
neighbourhood $\mathcal{U}_{n}$ of the zero section of $TM_{n}$ such that
$\mathcal{F}_{n}:=(\exp_{n},\pi_{M_{n}})$ is a diffeomorphism from
$\mathcal{U}_{n}$ onto a neighbourhood $\mathcal{V}_{n}$ of the diagonal of
$M_{n}\times M_{n}$. Moreover, since the restriction of $g_{n+1}$ to $M_{n}$
is $g_{n}$, we can choose $\mathcal{U}_{n+1}$ and $\mathcal{U}_{n}$ such that
$\mathcal{U}_{n}$ is contained in $\mathcal{U}_{n+1}$ and the restriction of
$\mathcal{F}_{n+1}$ to $\mathcal{U}_{n}$ is $\mathcal{F}_{n}$. Fix some
$\gamma=\underrightarrow{\lim}\gamma_{n}$ in $M$. Recall that if $\gamma_{n}$
belongs to $\mathsf{L}_{k}^{p}(\mathbb{S}^{1},M_{n})$, then $\gamma
_{n+1}=\gamma_{n}$ and so $\gamma_{n}$ belongs to $\mathsf{L}_{k}%
^{p}(\mathbb{S}^{1},M_{n+1})$. Now, from the construction of charts on
$\mathsf{L}_{k}^{p}(\mathbb{S}^{1},M_{n})$, it is clear that we have a chart
$(U_{n},\phi_{n})$ in $\mathsf{L}_{k}^{p}(\mathbb{S}^{1},M_{n})$ which
contains $\gamma_{n}$ and a chart $(U_{n+1},\phi_{n+1})$ in $\mathsf{L}%
_{k}^{p}(\mathbb{S}^{1},M_{n})$ such that $U_{n}\subset U_{n+1}$ and the
restriction of $\phi_{n+1}$ to $U_{n}$ is $\phi_{n}. $ Finally, since each Banach manifold $\mathsf{L}_{k}^{p}(\mathbb{S}
^{1},M_{n}))$  is Hausdorff and paracompact, from Proposition 15 of \cite{CaPe},  we get that  $\mathsf{L}_{k}^{p}(\mathbb{S}^{1},M)$
has a  structure of Hausdorff  convenient manifold. \newline 
Recall that for each $\gamma\in\mathsf{L}_{k}%
^{p}(\mathbb{S}^{1},M_{n})$ we have
\[
\Omega_{\gamma}(X,Y)=\int_{\mathbb{S}^{1}}(\omega_{n})_{\gamma(t)}%
(X(t),Y(t))dt.
\]
Since $\omega_{n}$ is the restriction of $\omega_{n+1}$ to $M_{n}$, this
implies that the restriction of $\Omega_{n+1}$ to $\mathsf{L}_{k}%
^{p}(\mathbb{S}^{1},M_{n})$ is $\Omega_{n}$. Therefore the sequence $\left(
\Omega_{n}\right)  $ is coherent and so defines a symplectic form $\Omega$ on
$\mathsf{L}_{k}^{p}(\mathbb{S}^{1},M)$ (\textit{cf.} Theorem
\ref{omega_ncoherentM}). Now, from the same argument, the sequence $\left(
\omega_{n}\right)  $ defines a symplectic form $\omega$ on $M$. Let $\gamma
\in\mathsf{L}_{k}^{p}(\mathbb{S}^{1},M)$ and $X,Y$ in $T_{\gamma}%
\mathsf{L}_{k}^{p}(\mathbb{S}^{1},M)$. There exists $n$ such that $\gamma$
belongs to $\mathsf{L}_{k}^{p}(\mathbb{S}^{1},M_{n})$ and $X,Y$ belongs to
$T_{\gamma}\mathsf{L}_{k}^{p}(\mathbb{S}^{1},M_{n})$. The definitions of
$\Omega$ and of $\omega$ imply that:
\[
\Omega_{\gamma}(X,Y)=\Omega_{n}(X,Y)=\int_{\mathbb{S}^{1}}(\omega_{n}%
)_{\gamma}(t)(X(t),Y(t))dt=\int_{\mathbb{S}^{1}}\omega_{\gamma}%
(t)(X(t),Y(t))dt
\]
since in this case, $(\omega_{n})_{\gamma(t)}(X(t),Y(t))=\omega_{\gamma
(t)}(X(t),Y(t))$. Moreover, this relation is independent of the choice of such
$n$. Now according to the proof of Theorem \ref{darbouxO}, the assumptions of
Proposition \ref{Darbouxascending} are satisfied.
\end{proof}

Unfortunately, even in the particular case of direct limit of Sobolev
manifolds of loops, while all the assumptions of Propositon
\ref{Darbouxascending} are satisfied, we have no general result about the existence of a
Darboux chart in this situation. However, we have at least the following
Example of direct limit of Sobolev manifolds of loops for which there exists
a Darboux chart for $\Omega$ at any point:

\begin{example}\normalfont
\label{exDarbouxascending} 
We consider $\mathbb{R}^{\infty}=\underrightarrow
{\lim}\mathbb{R}^{2n}$. Let $\left(  e_{1},\dots,e_{2n}\right)  $ be the
canonical basis of $\mathbb{R}^{2n}$ such that $\left(  e_{1},\dots
,e_{2n-2}\right)  $ is the canonical basis of $\mathbb{R}^{2n-2}$ for all
$n>0$. Any global diffeomorphism $\phi_{n}$ of $\mathbb{R}^{2n}$ defines a
chart of the natural manifold structure on $\mathbb{R}^{2n}$. Given the
canonical Darboux form
\[
\eta_{n}=\sum_{i=1}^{n}dx_{i}\wedge dy_{i}%
\]
then $\omega_{n}=\phi_{n}^{\ast}\eta_{n}$ is also a symplectic form and
$(\mathbb{R}^{2n},\psi_{n}=\phi_{n}^{-1})$ is a global Darboux chart for
$\omega_{n}$. We now consider a family $\{\phi_{n}\}$ such that $\phi_{n}$ is
a diffeomorphism of $\mathbb{R}^{2n}$ and the restriction of $\phi_{n}$ to
$\mathbb{R}^{2n-2}$ is $\phi_{n-1}$ for all $n>1$. We set $\omega_{n}=\phi
_{n}^{\ast}\eta_{n}$. Then $\left(  \omega_{n}\right)  $ is a sequence of
coherent symplectic forms and so defines a weak symplectic form $\omega$ on
$\mathbb{R}^{\infty}$. Moreover, if $\psi=\underrightarrow{\lim}\psi_{n}$,
from Theorem \ref{equivDarbouxascending}, $(\mathbb{R}^{\infty},\psi)$ is a
global Darboux chart. \newline 
On the other hand, according to Proposition \ref{symplectic ascendingloop}, we have a symplectic form $\Omega_{n}$ on each manifold $\mathsf{L}_{k}^{p}(\mathbb{S}^{1},\mathbb{R}^{2n})$ and also a
symplectic form $\Omega$ on $\mathsf{L}_{k}^{p}(\mathbb{S}^{1},\mathbb{R}%
^{\infty})$. Since $(\mathbb{R}^{2n},\psi_{n})$ is a (global) Darboux chart
for $\omega_{n}$, then, from Remark \ref{diffeo} , Remark \ref{diffodarboux}
and Theorem \ref{omega_ncoherentM}, if $\psi_{n}^{L}$ is the diffeomorphism of
$(\mathsf{L}_{k}^{p}(\mathbb{S}^{1},\mathbb{R}^{2n})$ induced by $\psi_{n}$,
then $(\mathsf{L}_{k}^{p}(\mathbb{S}^{1},\mathbb{R}^{2n}),\psi_{n}^{L})$ is
also a (global) Darboux chart for $\Omega_{n}$. Finally, let $\psi
^{L}=\underrightarrow{\lim}\psi_{n}^{L}$. Then from Theorem
\ref{equivDarbouxascending} again, $(\mathsf{L}_{k}^{p}(\mathbb{S}%
^{1},\mathbb{R}^{\infty}),\Psi^{L})$ is a (global) Darboux chart for $\Omega$.
\newline
\end{example}

\appendix

\section{Direct limit of ascending sequences of Banach manifolds and Banach bundles}

\label{direct_limit_Banach_manifolds_bundles}

\subsection{Direct limits\label{__DirectLimits}}

\begin{definition}
\label{D_DirectedSystem}Let $\left(  I,\leq\right)  $ be a directed set and
let $\mathbb{A}$ be a category. A \textit{directed system} is a pair
$\mathcal{S}=\left\{  \left(  Y_{i},\varepsilon_{i}^{j}\right)  \right\}
_{i\in I,\ j\in I,\ i\leq j}$ where $Y_{i}$ is an object of the category and
$\varepsilon_{i}^{j}:Y_{i}\longrightarrow Y_{j}$ is a morphism
(\textit{bonding map}) where:

\begin{description}
\item[\textbf{(DS 1)}] for all integer $i,\ \varepsilon_{i}^{i}%
=\operatorname{Id}_{Y_{i}}$;

\item[\textbf{(DS 2)}] for all integers $i\leq j\leq k$, $\varepsilon_{j}%
^{k}\circ\varepsilon_{i}^{j}=\varepsilon_{i}^{k}$.
\end{description}
\end{definition}

When $I=\mathbb{N}$ with the usual order relation, countable direct systems
are called \textit{direct sequences}.

\begin{definition}
\label{D_Cone}A cone over $\mathcal{S}$ is a pair $\left\{  \left(
Y,\varepsilon_{i}\right)  \right\}  _{i\in I}$ where $Y\in\operatorname*{ob}%
\mathbb{A}$ and $\varepsilon_{i}:Y_{i}\longrightarrow Y$ is such that
$\varepsilon_{i}^{j}\circ\varepsilon_{i}=\varepsilon_{j}$ whenever $i\leq j$.
\end{definition}

\begin{definition}
\label{D_DirectLimit}A cone $\left\{  \left(  Y,\varepsilon_{i}\right)
\right\}  _{i\in I}$ is a \textit{direct limit} of $\mathcal{S}$ if for every
cone $\left\{  \left(  Z,\theta_{i}\right)  \right\}  _{i\in I}$ over
$\mathcal{S}$ there exists a unique morphism $\psi:Y\longrightarrow Z$ such
that $\psi\circ\varepsilon_{i}=\theta_{i}$. We then write $Y=\underrightarrow
{\lim}\mathcal{S}$ or $Y=\underrightarrow{\lim}Y_{i}$.
\end{definition}


\subsection{Direct limit of ascending sequences of Banach
spaces\label{AscendingSequencesOfSupplementedBanachSpaces}}${}$


Let $\left(  M_{n}\right)  _{n\in\mathbb{N}^{\ast}}$ be a sequence of Banach
spaces such that
\[
\mathbb{M}_{1}\subset\mathbb{M}_{2}\subset\cdots\subset\mathbb{M}_{n}%
\subset\cdots
\]
where $M_{n+1}$ is closed in $M_{n}$. if $\iota_{n}^{n+1}$ is the natural
inclusion of $\mathbb{E}_{n}$ in $\mathbb{E}_{n+1}$ we say that $\{\left(
\mathbb{M}_{n},\iota_{n}^{n+1}\right)  \}_{n\in\mathbb{N}^{\ast}}$ is
\textit{a (strong) ascending sequence of Banach spaces}.\newline

Assume that for any $n\in\mathbb{N}^{\ast}$, $\mathbb{M}_{n}$ is supplemented
in $\mathbb{M}_{n+1}$ and let $\mathbb{M}_{1}^{\prime}$, $\mathbb{M}%
_{2}^{\prime}$, ... be Banach spaces such that:%
\[
\left\{
\begin{array}
[c]{c}%
\mathbb{M}_{1}=\mathbb{M}_{1}^{\prime}\\
\forall n\in\mathbb{N}^{\ast},\ \mathbb{M}_{n+1}\simeq\mathbb{M}_{n}%
\times\mathbb{M}_{n+1}^{\prime}%
\end{array}
\right.
\]

For $i,j\in\mathbb{N}^{\ast},\;i<j$, consider the injections
\[%
\begin{array}
[c]{cccc}%
\iota_{i}^{j}: & \mathbb{M}_{i}\backsimeq\mathbb{M}_{1}^{\prime}\times
\cdots\times\mathbb{M}_{i}^{\prime} & \rightarrow & \mathbb{M}_{j}%
\backsimeq\mathbb{M}_{1}^{\prime}\times\cdots\times\mathbb{M}_{j}^{\prime}\\
& (x_{1}^{\prime},\dots,x_{i}^{\prime}) & \mapsto & (x_{1}^{\prime}%
,\dots,x_{i}^{\prime},0,\dots,0)
\end{array}
\]

$\left\{  (\mathbb{M}_{n},\iota_{n}^{n+1})\right\}  _{n\in\mathbb{N}^{\ast}}$
is called an \textit{ascending sequence of supplemented Banach spaces.}%
\newline In each one of the previous situations, the direct limit
\[
\mathbb{M}=\bigcup\limits_{n\in\mathbb{N}^{\ast}}\mathbb{M}_{n}%
=\underrightarrow{\lim}\mathbb{M}_{n}%
\]
has a structure of convenient space (\cite{KrMi}).


\subsection{Direct limits of ascending sequences of Banach
manifolds\label{DirectLimitsOfAscendingSequencesOfBanachManifolds}}

\begin{definition}
\label{D_AscendingSequenceOfBanachManifolds}\bigskip$\mathcal{M}%
=(M_{n},\varepsilon_{n}^{n+1})_{n\in\mathbb{N}^{\ast}}$ is called an ascending
sequence of Banach manifolds if for any $n\in\mathbb{N}^{\ast}$, $\left(
M_{n},\varepsilon_{n}^{n+1}\right)  $ is a weak submanifold of $M_{n+1}$ where
the model $\mathbb{M}_{n}$ is supplemented in $\mathbb{M}_{n+1}.$
\end{definition}

\begin{proposition}
\label{P_ConditionsDefiningDLCP}Let $\mathcal{M}=\left\{  (M_{n}%
,\varepsilon_{n}^{n+1})\right\}  _{n\in\mathbb{N}^{\ast}}$ be an ascending
sequence of Banach manifolds.\newline Assume that for $x\in M=\underrightarrow
{\lim}M_{n}$, there exists a sequence of charts $\left\{  (U_{n},\phi
_{n})\right\}  _{n\in\mathbb{N}^{\ast}}$ of $M_{n}$, for each $n\in
\mathbb{N}^{\ast}$, such that:

\begin{description}
\item[\textbf{(ASC 1)}] $(U_{n})_{n\in\mathbb{N}^{\ast}}$ is an ascending
sequence of chart domains;

\item[\textbf{(ASC 2)}] $\forall n\in\mathbb{N}^{\ast},\ \phi_{n+1}%
\circ\varepsilon_{n}^{n+1}=\iota_{n}^{n+1}\circ\phi_{n}$.\newline Then
$U=\underrightarrow{\lim}U_{n}$ is an open set of $M$ endowed with the
$\mathrm{DL}$-topology and $\phi=\underrightarrow{\lim}\phi_{n}$ is a well
defined map from $U$ to $\mathbb{M}=\underrightarrow{\lim}\mathbb{M}_{n}$.
\newline Moreover, $\phi$ is a continuous homeomorphism from $U$ onto the open
set $\phi(U)$ of $\mathbb{M}$.
\end{description}
\end{proposition}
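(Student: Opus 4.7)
The plan is to proceed in three stages: first verify that $U=\underrightarrow{\lim}U_n$ sits inside $M$ as an open set in the DL-topology, then construct $\phi=\underrightarrow{\lim}\phi_n$ via the universal property of direct limits, and finally check that $\phi$ is a continuous open injection onto its image. Throughout, I will identify each $M_n$ with its image $\varepsilon_n(M_n)\subset M$ and each $\mathbb{M}_n$ with its image $\iota_n(\mathbb{M}_n)\subset \mathbb{M}$, where $\varepsilon_n$ and $\iota_n$ denote the canonical inclusions into the respective direct limits.

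For the first stage, condition (ASC 1) ensures that $\varepsilon_n^{n+1}(U_n)\subset U_{n+1}$, so $\{(U_n,\varepsilon_n^{n+1}|_{U_n})\}$ is itself an ascending sequence and its set-theoretic direct limit $U=\bigcup_{n}\varepsilon_n(U_n)$ satisfies $U\cap M_n=U_n$ for every $n$. By the definition of the DL-topology on $M$ (a set is open iff its intersection with each $M_n$ is open in $M_n$), this makes $U$ open in $M$. For the second stage, the family of compositions $\iota_n\circ\phi_n:U_n\to\mathbb{M}$ is compatible in the sense of Definition \ref{D_Cone}: indeed, using (ASC 2),
\[
\iota_{n+1}\circ\phi_{n+1}\circ\varepsilon_n^{n+1}=\iota_{n+1}\circ\iota_n^{n+1}\circ\phi_n=\iota_n\circ\phi_n.
\]
Hence the universal property of $U=\underrightarrow{\lim}U_n$ yields a unique map $\phi:U\to\mathbb{M}$ with $\phi\circ\varepsilon_n=\iota_n\circ\phi_n$ on each $U_n$.

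For the third stage, the DL-topology characterization of continuity gives continuity of $\phi$ immediately, since each restriction $\phi|_{U_n}=\iota_n\circ\phi_n$ is continuous as a composition of the homeomorphism $\phi_n$ with the continuous inclusion $\iota_n$. For injectivity, suppose $\phi(x)=\phi(y)$ with $x\in\varepsilon_n(U_n)$ and $y\in\varepsilon_m(U_m)$; taking $N=\max(n,m)$ and using (ASC 2) iteratively, both points lie in $\varepsilon_N(U_N)$ and their images in $\mathbb{M}_N$ coincide, so injectivity of $\phi_N$ forces $x=y$ in $U_N$ and hence in $U$. The image $\phi(U)=\bigcup_n \iota_n(\phi_n(U_n))$ satisfies $\phi(U)\cap\mathbb{M}_n=\iota_n(\phi_n(U_n))$ (again by (ASC 2) together with the fact that each $\phi_n$ is a homeomorphism onto an open set of $\mathbb{M}_n$), so $\phi(U)$ is open in the DL-topology of $\mathbb{M}$. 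Continuity of $\phi^{-1}:\phi(U)\to U$ follows by the same DL-criterion, since its restriction to $\iota_n(\phi_n(U_n))$ is $\varepsilon_n\circ\phi_n^{-1}\circ\iota_n^{-1}$, a composition of continuous maps.

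The one point requiring genuine attention, and the only place where (ASC 2) is indispensable, is verifying that the identifications performed by the direct limit on the source side $U$ are exactly matched by those performed on the target side $\mathbb{M}$: without the compatibility $\phi_{n+1}\circ\varepsilon_n^{n+1}=\iota_n^{n+1}\circ\phi_n$, neither the well-definedness nor the injectivity of $\phi$ would survive. Everything else is a routine unwinding of the universal property and of the definition of the DL-topology recalled in Appendix \ref{direct_limit_Banach_manifolds_bundles}.
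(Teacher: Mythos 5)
The paper does not actually prove this proposition; it is recalled in Appendix A as a result imported from \cite{CaPe}, so there is no in-paper argument to compare against. Your proposal is the standard and correct route: realize $U$ as the set-theoretic direct limit, invoke the universal property (using \textbf{(ASC 2)} to check the cone condition), and then verify continuity, injectivity, openness of the image and continuity of the inverse via the characterization of the $\mathrm{DL}$-topology by restrictions to the $M_n$ (resp.\ $\mathbb{M}_n$). The cone computation, the injectivity argument and the identification of where \textbf{(ASC 2)} is genuinely needed are all right.

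One step is stated more strongly than the hypotheses allow: the equalities $U\cap M_n=U_n$ and $\phi(U)\cap\mathbb{M}_n=\iota_n(\phi_n(U_n))$ need not hold, since \textbf{(ASC 1)} and \textbf{(ASC 2)} only give $U_n\subset U_{n+1}$ and the compatibility of the charts on $U_n$; nothing prevents $U_{m}\cap M_n$ from being strictly larger than $U_n$ for $m>n$. This does not sink the argument, because what the $\mathrm{DL}$-criterion actually requires is only that $U\cap M_n=\bigcup_{m\geq n}(\varepsilon_n^{m})^{-1}(U_m)$ be open in $M_n$, which it is as a union of preimages of open sets under the continuous inclusions $\varepsilon_n^{m}$ (and similarly for $\phi(U)\cap\mathbb{M}_n$ using the continuous inclusions $\iota_n^{m}$). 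You should also phrase the continuity of $\phi^{-1}$ accordingly: on $\phi(U)\cap\mathbb{M}_n$ the inverse is not globally $\varepsilon_n\circ\phi_n^{-1}\circ\iota_n^{-1}$, but it is continuous because continuity is local and on each open piece $\iota_n^{-1}(\phi_m(U_m))$ it agrees with $\varepsilon_m\circ\phi_m^{-1}\circ\iota_m^{-1}$. With these adjustments the proof is complete.
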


\begin{definition}
\label{DLChartProperty} We say that an ascending sequence $\mathcal{M}%
=\left\{  (M_{n},\varepsilon_{n}^{n+1})\right\}  _{n\in\mathbb{N}^{\ast}}$ of
Banach manifolds has the direct limit chart property \emph{\textbf{(DLCP)}} if
$(M_{n})_{n\in\mathbb{N}^{\ast}}$ satisfies \emph{\textbf{(ASC 1) }}$et$
\emph{\textbf{(ASC 2)}}.
\end{definition}

We then have the following result proved in \cite{CaPe}.

\begin{theorem}
\label{T_LBManifold} Let $\mathcal{M}=\left\{  (M_{n},\varepsilon_{n}%
^{n+1})\right\}  _{n\in\mathbb{N}^{\ast}}$ be an ascending sequence of Banach
manifolds where $M_{n}$ is modelled on the Banach spaces $\mathbb{M}_{n}$.
Assume that $(M_{n})_{n\in\mathbb{N}^{\ast}}$ has the direct limit chart
property at each point $x\in M=\underrightarrow{\lim}M_{n}$.\newline Then
there is a unique not necessarly Haussdorf convenient manifold structure on
$M=\underrightarrow{\lim}M_{n}$ modelled on the convenient space
$\mathbb{M}=\underrightarrow{\lim}\mathbb{M}_{n}$ endowed with the
$\mathrm{DL}$-topology. \newline
\end{theorem}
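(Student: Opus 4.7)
The plan is to construct a convenient atlas on $M=\underrightarrow{\lim}M_{n}$ out of the limit charts guaranteed by the DLCP, to verify that the transition maps are smooth in the convenient sense, and finally to extract uniqueness from the universal property of the direct limit.

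\emph{Step 1: Building the atlas.} For every $x=\underrightarrow{\lim}x_{n}\in M$, the DLCP together with Proposition \ref{P_ConditionsDefiningDLCP} supplies an ascending sequence of Banach charts $\{(U_{n},\phi_{n})\}_{n\geq n_{0}}$ around $x$ with $\phi_{n+1}\circ\varepsilon_{n}^{n+1}=\iota_{n}^{n+1}\circ\phi_{n}$; then $U=\underrightarrow{\lim}U_{n}$ is open for the DL-topology on $M$, and $\phi=\underrightarrow{\lim}\phi_{n}$ is a homeomorphism from $U$ onto an open subset of $\mathbb{M}=\underrightarrow{\lim}\mathbb{M}_{n}$. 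I take the atlas $\mathcal{A}$ to be the family of all such limit charts. Covering of $M$ is immediate since the DLCP is assumed at every point.

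\emph{Step 2: Convenient-smoothness of transitions.} Given two limit charts $(U,\phi)$ and $(V,\psi)$ with $U\cap V\neq\emptyset$, the coherence condition \textbf{(ASC 2)} on each side yields, on the overlap, $\psi\circ\phi^{-1}=\underrightarrow{\lim}(\psi_{n}\circ\phi_{n}^{-1})$. Each $\psi_{n}\circ\phi_{n}^{-1}$ is a smooth diffeomorphism between open subsets of the Banach spaces $\mathbb{M}_{n}$. The key point is to promote this coherent family to a convenient-smooth map between open subsets of $\mathbb{M}$. This is done by the standard convenient-calculus criterion: a map on an open set of $\mathbb{M}=\underrightarrow{\lim}\mathbb{M}_{n}$ is smooth iff it sends smooth curves to smooth curves; and every smooth (equivalently bounded) curve into $\mathbb{M}$ factors, locally in the parameter, through some $\mathbb{M}_{n}$ (\cite{KrMi}; see also the analogous limit argument used in the proof of Proposition \ref{omega_ncoherent}). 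Applying the Banach-smooth $\psi_{n}\circ\phi_{n}^{-1}$ on that factorization level and reassembling gives convenient-smoothness of $\psi\circ\phi^{-1}$. Its inverse being handled identically, the transition is a convenient diffeomorphism.

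\emph{Step 3: Uniqueness and topological issues.} Uniqueness is a direct consequence of the universal property: any convenient manifold structure on $M$ modelled on $\mathbb{M}$ for which each $\varepsilon_{n}:M_{n}\to M$ is a smooth embedding must, at each $x$, admit charts in which the $\varepsilon_{n}$ are represented by the $\iota_{n}$; any two such coherent chart systems produce the same limit chart up to smooth change of coordinates, so the atlases in the two structures are $C^{\infty}$-equivalent. The DL-topology is, by definition, the final topology for the family $(\varepsilon_{n})$, so the charts $\phi$ are automatically continuous. There is no reason for this final topology to be Hausdorff in general, whence the qualifier "not necessarily Hausdorff".

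\emph{Main obstacle.} The principal technical point is Step 2: one must ensure that a coherent direct limit of Banach-smooth transition maps is smooth in the convenient sense on $\mathbb{M}$. This is not a formal consequence of Banach smoothness and requires the curve-wise description of convenient smoothness together with the fact that every bounded set in $\underrightarrow{\lim}\mathbb{M}_{n}$ is actually bounded in some $\mathbb{M}_{n}$; this is what forces the natural receptacle of the direct limit to be only a convenient, rather than a Banach or Fr\'echet, manifold, and is the step where the full machinery of \cite{KrMi} is indispensable.
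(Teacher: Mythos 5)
Your proposal cannot be checked against an in-paper argument, because the paper does not prove Theorem \ref{T_LBManifold} at all: it is stated in Appendix A with the remark ``We then have the following result proved in \cite{CaPe}'', so the only ``proof'' on record here is a citation. Judged on its own, your sketch follows exactly the route one would expect that reference to take, and the route the surrounding material of the paper presupposes: take as atlas the limit charts furnished by the DLCP and Proposition \ref{P_ConditionsDefiningDLCP}, show the transition maps $\psi\circ\phi^{-1}=\underrightarrow{\lim}(\psi_{n}\circ\phi_{n}^{-1})$ are convenient-smooth via the curve criterion of \cite{KrMi}, and get uniqueness from the universal property of the direct limit. You also correctly isolate the genuinely non-formal step, namely that a coherent family of Banach-smooth maps has a convenient-smooth limit because smooth curves into $\mathbb{M}=\underrightarrow{\lim}\mathbb{M}_{n}$ locally factor through some step.

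Two points deserve more care than your sketch gives them. First, the factorization of smooth curves through a single $\mathbb{M}_{n}$ is not automatic for an arbitrary inductive limit; it uses the regularity of the strict inductive limit, which here is guaranteed by the standing hypothesis (Definition \ref{D_AscendingSequenceOfBanachManifolds}) that each $\mathbb{M}_{n}$ is supplemented (in particular closed) in $\mathbb{M}_{n+1}$ — you should say explicitly that this is where that hypothesis enters. Second, when you write $\psi\circ\phi^{-1}=\underrightarrow{\lim}(\psi_{n}\circ\phi_{n}^{-1})$ on the overlap, you are implicitly using $(U\cap V)\cap M_{n}=U_{n}\cap V_{n}$ and that $\phi(U\cap V)\cap\mathbb{M}_{n}=\phi_{n}(U_{n}\cap V_{n})$; a priori $U\cap M_{n}$ could be larger than $U_{n}$, and one must invoke the compatibility condition \textbf{(ASC 2)} (or pass to the cofinal subfamily of indices where the identification holds) to see that a curve factoring through $\mathbb{M}_{n}$ is actually acted on by the Banach-smooth map $\psi_{N}\circ\phi_{N}^{-1}$ for some fixed $N$. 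Neither point is a fatal gap — both are handled in \cite{CaPe} — but they are precisely the places where a blind reconstruction could go wrong, so they should be spelled out.
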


Moreover, if each $M_{n}$ is paracompact, then $M=\underrightarrow{\lim}M_{n}$
is provided with a Hausdorff convenient manifold structure. Recall that if
$\mathcal{M}=\left\{  (M_{n},\varepsilon_{n}^{n+1})\right\}  _{n\in
\mathbb{N}^{\ast}}$ is an ascending sequence of Banach manifolds where $M_{n}$
is modelled on the Banach spaces $\mathbb{M}_{n}$ supplemented in
$\mathbb{M}_{n+1}$, it has the direct limit chart property at each point of
$M=\underrightarrow{\lim}M_{n}$.\newline


\subsection{Direct limits of Banach vector
bundles\label{DirectLimitsOfBanachVectorBundles}}


\begin{definition}
\label{D_AscendingSequenceBanachVectorBundles} A sequence $\mathcal{E}%
=\left\{  \left(  E_{n},\pi_{n},M_{n}\right)  \right\}  _{n\in\mathbb{N}%
^{\ast}}$ of Banach vector bundles is called a strong ascending sequence of
Banach vector bundles if the following assumptions are satisfied:

\begin{description}
\item[\textbf{(ASBVB 1)}] $\mathcal{M}=(M_{n})_{n\in\mathbb{N}^{\ast}}$ is an
ascending sequence of Banach $C^{\infty}$-manifolds where each $M_{n}$ is
modelled on $\mathbb{M}_{n}$ supplemented in $\mathbb{M}_{n+1}$ and the
inclusion $\varepsilon_{n}^{n+1}:M_{n}\longrightarrow M_{n+1}$ is a
$C^{\infty}$ injective map such that $(M_{n},\varepsilon_{n}^{n+1})$ is a weak
submanifold of $M_{n+1}$;

\item[\textbf{(ASBVB 2)}] The sequence $(E_{n})_{n\in\mathbb{N}^{\ast}}$ is an
ascending sequence such that the sequence of typical fibers $\left(
\mathbb{E}_{n}\right)  _{n\in\mathbb{N}^{\ast}}$ of $(E_{n})_{n\in
\mathbb{N}^{\ast}}$ is an ascending sequence of Banach spaces and
$\mathbb{E}_{n}$ is a supplemented Banach subspace of $\mathbb{E}_{n+1}$;

\item[\textbf{(ASBVB 3)}] For each $n\in\mathbb{N}^{\ast}$, $\pi_{n+1}%
\circ\lambda_{n}^{n+1}=\varepsilon_{n}^{n+1}\circ\pi_{n}$ where $\lambda
_{n}^{n+1}:E_{n}\longrightarrow E_{n+1}$ is the natural inclusion;

\item[\textbf{(ASBVB 4)}] Any $x\in M=\underrightarrow{\lim}M_{n}$ has the
direct limit chart property {\textbf{(DLCP) }} for $(U=\underrightarrow{\lim
}U_{n},\phi=\underrightarrow{\lim}\phi_{n})$;

\item[\textbf{(ASBVB 5)}] For each $n\in\mathbb{N}^{\ast}$, there exists a
trivialization $\Psi_{n}:\left(  \pi_{n}\right)  ^{-1}\left(  U_{n}\right)
\longrightarrow U_{n}\times\mathbb{E}_{n}$ such that, for any $i\leq j$, the
following diagram is commutative:
\end{description}

\[%
\begin{array}
[c]{ccc}%
\left(  \pi_{i}\right)  ^{-1}\left(  U_{i}\right)  & \underrightarrow
{\lambda_{i}^{j}} & \left(  \pi_{j}\right)  ^{-1}\left(  U_{j}\right) \\
\Psi_{i}\downarrow &  & \downarrow\Psi_{j}\\
U_{i}\times\mathbb{E}_{i} & \underrightarrow{\varepsilon_{i}^{j}\times
\iota_{i}^{j}} & U_{j}\times\mathbb{E}_{j}.
\end{array}
\]

\end{definition}

For example, the sequence $\left\{  \left(  TM_{n},\pi_{n},M_{n}\right)
\right\}  _{n\in\mathbb{N}^{\ast}}$ is a strong ascending sequence of Banach
vector bundles whenever $(M_{n})_{n\in\mathbb{N}^{\ast}}$ is an ascending
sequence which has the direct limit chart property at each point of $x\in
M=\underrightarrow{\lim}M_{n}$ whose model $\mathbb{M}_{n}$ is supplemented in
$\mathbb{M}_{n+1}$.

We then have the following result given in \cite{CaPe}.

\begin{proposition}
\label{P_StructureOnDirectLimitLinearBundles}

Let $\left(  E_{n},\pi_{n},M_{n}\right)  _{n\in\mathbb{N}^{\ast}}$ be a strong
ascending sequence of Banach vector bundles. Then we have:

\begin{enumerate}
\item $\underrightarrow{\lim}E_{n}$ has a structure of not necessarly
Hausdorff convenient manifold modelled on the LB-space $\underrightarrow{\lim
}\mathbb{M}_{n}\times\underrightarrow{\lim}\mathbb{E}_{n}$ which has a
Hausdorff convenient structure if and only if $M$ is Hausdorff.

\item $\left(  \underrightarrow{\lim}E_{n},\underrightarrow{\lim}\pi
_{n},\underrightarrow{\lim}M_{n}\right)  $ can be endowed with a structure of
convenient vector bundle whose typical fibre is $\underrightarrow{\lim
}\mathbb{\mathbb{E}}_{n}$.
\end{enumerate}
\end{proposition}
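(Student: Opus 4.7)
The plan is to apply Theorem \ref{T_LBManifold} to the ascending sequence $\left(E_{n}\right)_{n\in\mathbb{N}^{\ast}}$ and then to combine the trivializations $\Psi_{n}$ of \textbf{(ASBVB 5)} via the universal property of direct limits to obtain a convenient bundle structure on $E=\underrightarrow{\lim}E_{n}$.

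First, I would build an atlas on each $E_{n}$ out of the given data. Starting from a DL-chart $(U_{n},\phi_{n})$ on $M_{n}$ produced by \textbf{(ASBVB 4)} and the trivialization $\Psi_{n}:\pi_{n}^{-1}(U_{n})\to U_{n}\times\mathbb{E}_{n}$ of \textbf{(ASBVB 5)}, set
\[
\Phi_{n}:=(\phi_{n}\times\operatorname{Id}_{\mathbb{E}_{n}})\circ\Psi_{n}:\pi_{n}^{-1}(U_{n})\longrightarrow\phi_{n}(U_{n})\times\mathbb{E}_{n}\subset\mathbb{M}_{n}\times\mathbb{E}_{n}.
\]
The crucial point is that this sequence of charts has the direct limit chart property in the sense of Definition \ref{DLChartProperty} for the ascending sequence $(E_{n})_{n\in\mathbb{N}^{\ast}}$: indeed \textbf{(ASC 1)} follows from $U_{n}\subset U_{n+1}$ together with the compatibility $\pi_{n+1}\circ\lambda_{n}^{n+1}=\varepsilon_{n}^{n+1}\circ\pi_{n}$ of \textbf{(ASBVB 3)}, while \textbf{(ASC 2)} is a direct consequence of the commutative diagram in \textbf{(ASBVB 5)} combined with \textbf{(ASC 2)} for $(U_{n},\phi_{n})$. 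Then Theorem \ref{T_LBManifold}, applied to $(E_{n})_{n\in\mathbb{N}^{\ast}}$, endows $E=\underrightarrow{\lim}E_{n}$ with a (not necessarily Hausdorff) convenient manifold structure modelled on the LB-space $\underrightarrow{\lim}(\mathbb{M}_{n}\times\mathbb{E}_{n})\simeq\underrightarrow{\lim}\mathbb{M}_{n}\times\underrightarrow{\lim}\mathbb{E}_{n}$, which proves part (1) modulo the Hausdorff equivalence.

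For the Hausdorff statement, one implication is immediate: if $E$ is Hausdorff, then the zero section $M\hookrightarrow E$, which is a closed convenient embedding (it is the direct limit of zero sections of the $E_{n}$), inherits a Hausdorff topology. Conversely, assuming $M$ is Hausdorff, I would use local triviality: for two distinct points $e,e'\in E$, either $\pi(e)\neq\pi(e')$, in which case disjoint open neighbourhoods in $M$ pulled back by $\pi$ separate them, or $\pi(e)=\pi(e')=x$, in which case a DL-chart $(U=\underrightarrow{\lim}U_{n},\Phi=\underrightarrow{\lim}\Phi_{n})$ around $x$ gives $\pi^{-1}(U)\cong U\times\underrightarrow{\lim}\mathbb{E}_{n}$, and it suffices to separate $e$ and $e'$ inside this fiber using the Hausdorff convenient structure of the LB-space $\underrightarrow{\lim}\mathbb{E}_{n}$.

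For part (2), the projections $\pi_{n}:E_{n}\to M_{n}$ are compatible with $\lambda_{n}^{n+1}$ and $\varepsilon_{n}^{n+1}$ by \textbf{(ASBVB 3)}, so the universal property of direct limits yields a well-defined map $\pi=\underrightarrow{\lim}\pi_{n}:E\to M$; its smoothness follows from Lemma 22 of \cite{CaPe} (as used repeatedly in the paper) since each $\pi_{n}$ is smooth and the transition maps on charts produced above are precisely of the form $(\phi_{n}\circ\pi_{n},\text{projection})$. The trivializations $\Psi_{n}$, being compatible with $\lambda_{n}^{n+1}$ and $\varepsilon_{n}^{n+1}\times\iota_{n}^{n+1}$ by \textbf{(ASBVB 5)}, assemble into a direct limit map
\[
\Psi:=\underrightarrow{\lim}\Psi_{n}:\pi^{-1}(U)\longrightarrow U\times\underrightarrow{\lim}\mathbb{E}_{n},
\]
which is a convenient diffeomorphism and is fiberwise linear with typical fiber $\underrightarrow{\lim}\mathbb{E}_{n}$. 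Finally, I would verify that transition cocycles between two such local trivializations are smooth $\operatorname{GL}$-valued maps on the overlap, by remarking that at each level $n$ one has the usual smooth Banach cocycle and these cocycles are coherent with the bonding maps, hence pass to the direct limit as a smooth map into the convenient group $\operatorname{GL}(\underrightarrow{\lim}\mathbb{E}_{n})$; this is where the main obstacle lies, since smoothness in the convenient sense on an LB-space requires care, but it reduces to the bornological compatibility property of Lemma 22 of \cite{CaPe} already invoked. This gives the convenient vector bundle structure required.
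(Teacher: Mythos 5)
The paper does not actually prove this proposition: it is quoted verbatim from \cite{CaPe} ("We then have the following result given in \cite{CaPe}"), so there is no in-paper argument to compare against. That said, your proposal is the natural proof and, as far as I can check it against the definitions in Appendix A, it is correct: the charts $\Phi_{n}=(\phi_{n}\times\operatorname{Id}_{\mathbb{E}_{n}})\circ\Psi_{n}$ do satisfy \textbf{(ASC 1)} (via \textbf{(ASBVB 3)} and $U_{n}\subset U_{n+1}$) and \textbf{(ASC 2)} (via the commutative square of \textbf{(ASBVB 5)} together with \textbf{(ASC 2)} for the base), so Theorem \ref{T_LBManifold} applies to the sequence $(E_{n})$, and the bundle data passes to the limit by the universal property. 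Two points deserve to be made explicit rather than left as a "$\simeq$" or a remark. First, the identification $\underrightarrow{\lim}(\mathbb{M}_{n}\times\mathbb{E}_{n})\simeq\underrightarrow{\lim}\mathbb{M}_{n}\times\underrightarrow{\lim}\mathbb{E}_{n}$ as convenient (LB-)spaces is true for countable strict inductive limits but is not automatic for general direct limits, and it is exactly what makes the model space in Point~1 the stated product; it should be justified (it is where the supplementation hypotheses of \textbf{(ASBVB 1)}--\textbf{(ASBVB 2)} enter). Second, for Point~2 the convenient smoothness of the transition cocycles should be formulated, as in Kriegl--Michor, as smoothness of the maps $(x,v)\mapsto g(x)v$ on $(U\cap U')\times\underrightarrow{\lim}\mathbb{E}_{n}$ rather than as smoothness into a group $\operatorname{GL}(\underrightarrow{\lim}\mathbb{E}_{n})$; with that formulation your reduction to the coherent Banach-level cocycles via the bounded/curve characterization of smoothness (Lemma 22 of \cite{CaPe}) is exactly the right mechanism. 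Your Hausdorff argument (zero section for one direction, separation fibrewise in a trivializing chart plus separation in the Hausdorff base for the other) is also the standard and correct one.
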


\begin{corollary}
\label{C_DirectLimitTangentBundle} Consider the sequence $\left\{  \left(
TM_{n},\pi_{n},M_{n}\right)  \right\}  _{n\in\mathbb{N}^{\ast}}$ associated to
an ascending sequence $(M_{n})_{n\in\mathbb{N}^{\ast}}$ which has the direct
limit chart property at each point of $x\in M=\underrightarrow{\lim}M_{n}$,
where $M_{n}$ is modelled on $\mathbb{M}_{n}$. Then $\underrightarrow{\lim
}TM_{n}$ has a convenient vector bundle structure whose typical fibre is
$\underrightarrow{\lim}\mathbb{\mathbb{M}}_{n}$ over $\underrightarrow{\lim
}M_{n}$.
\end{corollary}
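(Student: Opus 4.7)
The plan is to reduce the statement to Proposition \ref{P_StructureOnDirectLimitLinearBundles} by verifying that
$\mathcal{E}=\left\{(TM_n,\pi_n,M_n)\right\}_{n\in\mathbb{N}^{\ast}}$ is a strong ascending sequence of Banach vector bundles in the sense of Definition \ref{D_AscendingSequenceBanachVectorBundles}. Once conditions \textbf{(ASBVB 1)}--\textbf{(ASBVB 5)} are checked, conclusion~2 of the proposition immediately yields a convenient vector bundle structure on $\underrightarrow{\lim}TM_n$ over $\underrightarrow{\lim}M_n$ with typical fibre $\underrightarrow{\lim}\mathbb{M}_n$, which is the claim.

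First I would read off \textbf{(ASBVB 1)} and \textbf{(ASBVB 4)} directly from the hypotheses: the sequence $(M_n)$ is ascending with each $\mathbb{M}_n$ supplemented in $\mathbb{M}_{n+1}$, and the direct limit chart property at every $x\in M$ is assumed. For \textbf{(ASBVB 2)}, I would take the bonding map $\lambda_n^{n+1}:=T\varepsilon_n^{n+1}:TM_n\to TM_{n+1}$; since $\varepsilon_n^{n+1}$ realises $M_n$ as a weak submanifold of $M_{n+1}$, its tangent map is a fibrewise injection whose fibrewise action on typical fibres is precisely the supplemented inclusion $\iota_n^{n+1}:\mathbb{M}_n\hookrightarrow\mathbb{M}_{n+1}$. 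For \textbf{(ASBVB 3)}, the identity $\pi_{n+1}\circ T\varepsilon_n^{n+1}=\varepsilon_n^{n+1}\circ\pi_n$ is just the functoriality of $T$ applied to the inclusion $M_n\hookrightarrow M_{n+1}$.

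The only point requiring a small computation is \textbf{(ASBVB 5)}. Fix $x\in M$ and, using \textbf{(DLCP)}, pick a compatible sequence of charts $(U_n,\phi_n)$ with $\phi_{n+1}\circ\varepsilon_n^{n+1}=\iota_n^{n+1}\circ\phi_n$. The natural trivialisation
\[
\Psi_n:=T\phi_n:\pi_n^{-1}(U_n)\longrightarrow \phi_n(U_n)\times\mathbb{M}_n
\]
is a Banach bundle chart. Differentiating the compatibility relation between the $\phi_n$ gives, at each point, $T\phi_{n+1}\circ T\varepsilon_n^{n+1}=(\varepsilon_n^{n+1}\times\iota_n^{n+1})\circ T\phi_n$, which is exactly the commutativity of the diagram in \textbf{(ASBVB 5)}. (One can either compose with the inverse chart $\phi_n^{-1}$ to reduce to open sets in Banach spaces and differentiate on the nose, or argue by naturality of $T$ applied to the commutative diagram of smooth maps.) I expect this verification to be the one requiring the most care, because one has to track the identification of $TU_n$ with $U_n\times\mathbb{M}_n$ consistently and check that $\iota_n^{n+1}$ really appears as the fibre component of $T\varepsilon_n^{n+1}$ in those trivialisations; this is where the hypothesis that $\mathbb{M}_n$ be supplemented in $\mathbb{M}_{n+1}$ is used.

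With \textbf{(ASBVB 1)}--\textbf{(ASBVB 5)} established, I would invoke Proposition \ref{P_StructureOnDirectLimitLinearBundles}~(2): the direct limit $\underrightarrow{\lim}TM_n$ carries a convenient vector bundle structure over $\underrightarrow{\lim}M_n$ whose typical fibre is $\underrightarrow{\lim}\mathbb{M}_n$, concluding the proof.
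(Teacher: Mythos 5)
Your proposal is correct and follows exactly the route the paper intends: the text immediately preceding Proposition \ref{P_StructureOnDirectLimitLinearBundles} already records that $\left\{(TM_n,\pi_n,M_n)\right\}_{n\in\mathbb{N}^{\ast}}$ is a strong ascending sequence of Banach vector bundles under the stated hypotheses (with the supplementation of $\mathbb{M}_n$ in $\mathbb{M}_{n+1}$ built into Definition \ref{D_AscendingSequenceOfBanachManifolds}), and the corollary is then read off from point~2 of that proposition. Your explicit check of \textbf{(ASBVB 5)} via $\Psi_n=T\phi_n$ and differentiation of the chart compatibility relation is precisely the verification the paper leaves implicit (deferring to \cite{CaPe}).
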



\section{Problem of existence of a solution of a direct limit ODE's}

\label{solODE}

\subsection{Sufficient conditions of existence of solutions for direct limit
of ODEs}${}$

\label{sufsolODE}

Let $\left(  \mathbb{E}_{n}\right)  _{n\in\mathbb{N}^{\ast}}$ be an ascending
sequence of Banach spaces and set $\mathbb{E}=\underrightarrow{\lim}%
\mathbb{E}_{n}$. If $\mathbb{E}_{n}$ is supplemented in $\mathbb{E}_{n+1}$,
for each $n\in\mathbb{N}^{\ast}$, we can choose a supplement linear subspace
$\mathbb{E}_{n}^{\prime}$ in $\mathbb{E}_{n+1}$. We can provide the sequence
$\left(  \mathbb{E}_{n}\right)  _{n\in\mathbb{N}^{\ast}}$ with norms
$||\;||_{n}$ given by induction in the following way:

$\bullet\;$ We choose any norm $||\;||_{1}$ on $\mathbb{E}_{1}$;

$\bullet\;$ Assume that we have built a norm $||\;||_{n}$ on $\mathbb{E}_{n},$
we then choose any norm $||\;||_{n}^{\prime}$ on $\mathbb{E}_{n}^{\prime}$ and
we provide $\mathbb{E}_{n+1}$ with the norm $||\;||_{n+1}=||\;||_{n}%
+||\;||_{n}^{\prime}$.\newline

Since by construction, we have an increasing sequence of norms $\left(
\left\Vert \ \right\Vert _{n}\right)  _{n\in\mathbb{N}^{\ast}}$ such that the
restriction of $||\;||_{n+1}$ to $\mathbb{E}_{n}$ is $||\;||_{n}$, then
$||\;||=\underrightarrow{\lim}||\;||_{n}$ is well defined and is a continuous
map on $\mathbb{E}$. More generally we introduce:

\begin{definition}
\label{coherentnorm} We will say that a sequence of increasing $\left(
\left\Vert \ \right\Vert _{n}\right)  _{n\in\mathbb{N}^{\ast}}$is coherent if
the restriction of $||\;||_{n+1}$ to $\mathbb{E}_{n}$ is $||\;||_{n}$ for all
$n\in\mathbb{N}^{\ast}$.
\end{definition}

\begin{proposition}
\label{norm|E} Let $\left(  \left\Vert \ \right\Vert _{n}\right)
_{n\in\mathbb{N}^{\ast}}$ be a sequence of coherent norms. Then
$||\;||=\underrightarrow{\lim}||\;||_{n}$. Moreover, the topology defined by
$||\;||$ is coarser than the $c^{\infty}$-topology of $\mathbb{E}$.
\end{proposition}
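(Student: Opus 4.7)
The plan is to verify the two assertions separately. First, I would check that $||\;||=\underrightarrow{\lim}||\;||_n$ is well defined as a map $\mathbb{E}\to\mathbb{R}$. Fix $x\in\mathbb{E}=\bigcup_{n}\mathbb{E}_n$ and let $n_0$ be the smallest index such that $x\in\mathbb{E}_{n_0}$. Coherence gives $||x||_{n+1}=||x||_n$ whenever $x\in\mathbb{E}_n$, so $||x||_n=||x||_{n_0}$ for every $n\geq n_0$; setting $||x||:=||x||_{n_0}$ makes the family of scalars $(||x||_n)$ eventually constant and identifies $||x||$ with its limit. The norm axioms for $||\;||$ are then obtained by taking any two vectors $x,y\in\mathbb{E}$ and choosing $n$ large enough so that both belong to $\mathbb{E}_n$; the axioms for $||\;||_n$ transfer directly.

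For the topological statement, I would show that $||\;||:\mathbb{E}\to\mathbb{R}$ is $c^\infty$-continuous. Since the $c^\infty$-topology is the final topology with respect to smooth curves, this is equivalent to showing that $||c(\cdot)||:\mathbb{R}\to\mathbb{R}$ is continuous for every smooth curve $c:\mathbb{R}\to\mathbb{E}$.

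Fix such a $c$ and a compact interval $K\subset\mathbb{R}$. The image $c(K)$ is bounded in the convenient space $\mathbb{E}$. By \cite{CaPe} Lemma 19 (already quoted in the paper in the proof of Proposition \ref{omegan}), every bounded subset of $\mathbb{E}=\underrightarrow{\lim}\mathbb{E}_n$ sits inside some $\mathbb{E}_n$ and is bounded there. Moreover, in a strong ascending sequence of Banach spaces the induced topology on each step $\mathbb{E}_n$ coincides with its original Banach topology, so the smooth curve $c$ factors locally as a continuous (in fact smooth) map $c|_K:K\to\mathbb{E}_n$. Since $||c(t)||=||c(t)||_n$ for every $t\in K$ by coherence and since $||\;||_n$ is continuous on the Banach space $\mathbb{E}_n$, the composition $t\mapsto||c(t)||$ is continuous on $K$. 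As $K$ was an arbitrary compact interval, $t\mapsto||c(t)||$ is continuous on $\mathbb{R}$. Hence $||\;||$ is $c^\infty$-continuous, which means that every open set for the topology defined by $||\;||$ is $c^\infty$-open, i.e., the topology defined by $||\;||$ is coarser than the $c^\infty$-topology of $\mathbb{E}$.

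The main obstacle is the local factorization step: one needs that a smooth curve into the direct limit not only has its image contained in some $\mathbb{E}_n$ on compact subintervals but is in fact continuous into the Banach topology on $\mathbb{E}_n$. This relies on \cite{CaPe} Lemma 19 together with the standard fact that in a strict inductive limit of Banach spaces each step keeps its original Banach topology; once these ingredients are available, the rest of the argument is routine.
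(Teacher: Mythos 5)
Your proof of the first assertion (well-definedness of $||\;||$ and the norm axioms via passing to an $\mathbb{E}_n$ containing both vectors) is exactly the paper's argument. For the topological assertion you take a genuinely different route. The paper argues directly on open sets: by coherence, $B(a,r)\cap\mathbb{E}_n$ is precisely the $||\;||_n$-ball $B_n(a,r)$, hence open in $\mathbb{E}_n$ for every $n$, so $B(a,r)$ is open for the direct limit topology; the identification of that topology with the $c^\infty$-topology is then quoted from \cite{CaPe} (Proposition 20). You instead test $||\;||$ against smooth curves, using the characterization of the $c^\infty$-topology as the final topology with respect to smooth curves. This works, but it needs strictly more input: besides \cite{CaPe} Lemma 19 (bounded sets of $\mathbb{E}$ live and are bounded in some step), your ``local factorization'' step requires knowing that the topology induced on $\mathbb{E}_n$ by $\mathbb{E}$ is the original Banach topology (the Dieudonn\'e--Schwartz strictness property, available here because each $\mathbb{E}_n$ is closed in $\mathbb{E}_{n+1}$), so that continuity of $c$ into $\mathbb{E}$ together with $c(K)\subset\mathbb{E}_n$ yields continuity of $c|_K$ into the Banach space $\mathbb{E}_n$; the parenthetical ``in fact smooth'' is true but unnecessary and would require yet another argument. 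Also, to conclude that a ball $B(a,r)$ is $c^\infty$-open you should apply your continuity statement to the translated smooth curve $t\mapsto c(t)-a$. What your approach buys is that it bypasses the explicit identification of the $\mathrm{DL}$-topology with the $c^\infty$-topology; what the paper's approach buys is that it is elementary, using only the coherence of the norms and one citation. Both are correct.
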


\begin{proof}
At first note that for any $x\in\mathbb{E}$, from the coherence of the
sequence of norms, we have $||x||_{n}=||x||_{n}^{\prime}$ if $x\in
\mathbb{E}_{n}\subset\mathbb{E}_{n^{\prime}}$; So $||x||$ is well defined ant
it follows that $||x||=\underrightarrow{\lim}||x||_{n}$. To prove that
$||\;||$ is a norm, from its definition, we only have to show the triangular
inequality because the other properties are obvious from the previous
argument. Fix some $x$ and $y$ in $\mathbb{E}$. Let $n_{0}$ be the smallest
integer $n$ such that $x$ and $y$ belong to $E_{n}$. Therefore we have
\[
\left\Vert x+y\right\Vert =\left\Vert x+y\right\Vert _{n_{0}}\leq\left\Vert
x\right\Vert _{n_{0}}+\left\Vert y\right\Vert _{n_{0}}=\left\Vert x\right\Vert
+\left\Vert y\right\Vert .
\]
Consider the open balls $B(a,r)=\{x\in\mathbb{E}\;:||x-a||<r\}$ and set
$B_{n}(a,r)=B(a,r)\cap E_{n}$. From the coherence of the sequence of norms, we
have $B_{n}(a,r)=\{x\in\mathbb{E}_{n}\;:||x-a||_{n}<r\}$ and so $B_{n}(a,r)$
is open in $\mathbb{E}_{n}$. It follows that $B(a,r)$ is an open set of the
direct limit topology. This implies that the topology defined by $||\;||$ is
coarser than the direct limit topology. But the direct limit topology is the
same as the the $c^{\infty}$-topology of $\mathbb{E}$ (\textit{cf.}
\cite{CaPe}, Proposition 20).\\
\end{proof}

Assume that $\left(  \left\Vert \ \right\Vert _{n}\right)  _{n\in
\mathbb{N}^{\ast}}$is a sequence of coherent norms on $\mathbb{E}_{n}$. We
denote by $\mathcal{L}(\mathbb{E}_{n})$ the set of continuous linear
endomorphisms of $\mathbb{E}_{n}$ provided by the norm $||\;||_{n}%
^{\mathrm{op}}$ associated to $||\;||_{n}$. If $\left(  T_{n}\right)
_{n\in\mathbb{N}^{\ast}}$ is a sequence of endomorphisms of $\mathbb{E}_{n}$,
such that the restriction of $T_{n+1}$ to $\mathbb{E}_{n}$ is $T_{n}$, we have

$||T_{n+1}||_{n+1}^{\mathrm{op}}\leq||T_{n}||_{n}^{\mathrm{op}}$ and
$||(T_{n+1})_{|E_{n}}||_{n+1}^{\mathrm{op}}=||T_{n}||_{n}^{\mathrm{op}}%
$.\newline

\textit{From now, and in this section, we assume that the ascending sequence
}$\left(  \mathbb{E}_{n}\right)  _{n\in\mathbb{N}^{\ast}}$ \textit{of Banach
spaces is provided with a coherent sequence }$\left(  \left\Vert \ \right\Vert
_{n}\right)  _{n\in\mathbb{N}^{\ast}}$\textit{ of norms}.\newline

Given a compact interval $I$, we fix the map 
\[%
\begin{array}
[c]{cccc}%
f=\underrightarrow{\lim}f_{n}: & I\times U & \longrightarrow & E\\
& \left(  t,x\right)  & \longmapsto & \underrightarrow{\lim}f_{n}\left(
t,x_{n}\right)
\end{array}
\]
For any $n>0$, we consider the differential equation in the Banach space
$E_{n}$~:
\begin{equation}
x_{n}^{\prime}=f_{n}\left(  t,x_{n}\right) \label{Eq_ED-EspaceBanach-En}
\end{equation}

Now fix some $a=\underrightarrow{\lim}a_{n}\in U$. Fix some $t_{0}\in I$ and
let $T>0$ be such that $J=]t_{0}-T,t_{0}+T[\subset I$. If $n_{0}$ is the
smallest integer such that $a\in\mathbb{E}_{n}$, then $a_{n}=a_{n_{0}}$ for
$n\geq n_{0}$. For each $n\geq n_{0}$ we consider a closed ball $\bar{B}_{n}(
a_{n}, r_{n})\subset U_{n}$ centred at $a_{n}$ (relative to the norm
$||\;||_{n}$). \newline

Assume that we have the following assumption:\newline

\begin{description}
\item[\textbf{(A}$_{n}$\textbf{)}] There exists a constant $K_{n}>0$ such that:

\begin{description}
\item $\forall(t,x_{n})\in I\times U_{n},\ ||f_{n}(t,x_{n})||_{n}\leq K_{n}$;

\item $\forall(t,x_{n})\in I\times U_{n},\ ||D_{2}f_{n}(t,x_{n})||_{n}^{\mathrm{op}}\leq K_{n}$\\
where $||\;||_{n}^{\mathrm{op}}$ is the operator norm on the set $\mathcal{L}(E_{n})$ of endomorphisms of $E_{n}$ and
$D_{2}$ is the differential relative to the second variable. \newline
\end{description}
\end{description}

\noindent From the classical Theorem of existence of solution of an ordinary
differential equation (\textit{cf.} \cite{Car}, Corollary 1.7.2 for instance),
we can conclude that there exists $0<\tau<\min\{r_{n}/K_{n},T\}$ such that
each differential equations (\ref{Eq_ED-EspaceBanach-En}) has a unique
solution $\gamma_{n}^{x_{n}}$ defined on $J=\left[  t_{0}-\tau,t_{0}%
+\tau\right]  \subset I$ with initial condition $\gamma_{n}^{x_{n}}%
(t_{0})=x_{n}$, for any $x_{n}$ in the open ball $B_{n}(a_{n},r_{n}-\tau
K_{n})$ (relatively to the norm $||\;||_{n}$).\newline

\noindent

Assume that we have following complementary assumptions: \newline

\begin{description}
\item[\textbf{(B)}] The sequence $(r_{n}/K_{n})$ is lower bounded\textit{. }
\end{description}

Then we can choose $\tau$ independent of $n.$

\begin{description}
\item[\textbf{(C)}] The sequence $(r_{n}-\tau K_{n})$ is lower
bounded.
\end{description}

\begin{theorem}
\label{existsol}(existence of solutions) With the previous notations, under
the assumptions \textbf{(}$A_{n}$\textbf{)} for all $n\geq n_{0},$
\textbf{(B)} and \textbf{(C)}, the sequence of open neighbourhoods of $a_{n}$
\[
\left(  V_{n}=B_{n}(a_{n},r_{n}-\tau K_{n})\cap B_{n}(a_{n_{0}},r_{n_{0}}-\tau
K_{n_{0}})\right)
\]
of open neighbourhoods of $a_{n}$ is an ascending sequence of open sets and we
have:\newline

for each $x=\underrightarrow{\lim}x_{n}\in V=\underrightarrow{\lim}V_{n}$,
there exists a unique solution $\gamma^{x}:[t_{0}-\tau,t_{0}+\tau
]\rightarrow\mathbb{E}$ of the ODE
\begin{equation}
x^{\prime}=f\left(  t,x\right)  \label{Eq_ED-EspaceBanach-E}
\end{equation}

with initial condition $\gamma^{x}(t_{0})=x$.
\end{theorem}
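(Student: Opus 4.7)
My plan is to build $\gamma^x$ by piecing together the classical Banach-space solutions $\gamma_n^{x_n}$ produced at each level by the Cauchy--Lipschitz theorem, and then verify that the result is a curve in $\mathbb{E}$ that solves \eqref{Eq_ED-EspaceBanach-E} and is unique. By hypothesis $(B)$, the sequence $(r_n/K_n)_{n \geq n_0}$ admits a positive lower bound $\mu$, so I fix one $\tau \in (0, \min(\mu, T))$ that serves uniformly at every level. With this $\tau$, hypothesis $(A_n)$ places me exactly in the setting of the Banach-space Cauchy--Lipschitz theorem (\cite{Car}, Corollary 1.7.2), yielding, for each $n \geq n_0$ and each $x_n \in B_n(a_n, r_n - \tau K_n)$, a unique $C^1$ curve $\gamma_n^{x_n} : J := [t_0 - \tau, t_0 + \tau] \to U_n$ solving \eqref{Eq_ED-EspaceBanach-En} with $\gamma_n^{x_n}(t_0) = x_n$.

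I then verify the ascending property of $(V_n)$ and use uniqueness at each Banach level to assemble $\gamma^x$. Since $a_n = a_{n_0}$ for $n \geq n_0$ and the norms are coherent (Definition \ref{coherentnorm}), one has $V_n = B_n(a_n, \rho_n)$ with $\rho_n = \min(r_n - \tau K_n,\, r_{n_0} - \tau K_{n_0})$, and by $(C)$, after a harmless initial shrinking of $r_{n_0}$, I may assume $r_{n_0} - \tau K_{n_0}$ is a lower bound of $(r_n - \tau K_n)_{n \geq n_0}$; then $\rho_n$ is constant in $n$ and the inclusion $V_n \subset V_{n+1}$ is immediate from the coherence of the norms. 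For $x_n \in V_n \subset V_{n+1}$, because $f_{n+1}|_{\mathbb{E}_n} = f_n$, the curve $\gamma_n^{x_n}$ is also a level-$(n+1)$ solution with the same initial value $x_n$, so Banach uniqueness at level $n+1$ forces $\gamma_{n+1}^{x_n} = \gamma_n^{x_n}$. Given $x \in V$, picking $n_1 \geq n_0$ with $x \in V_{n_1}$ and setting $\gamma^x := \gamma_{n_1}^x$, viewed as a map $J \to \mathbb{E}$, yields a well-defined candidate independent of $n_1$, satisfying $\gamma^x(t_0) = x$ and $(\gamma^x)'(t) = f_{n_1}(t, \gamma^x(t)) = f(t, \gamma^x(t))$, hence a solution of \eqref{Eq_ED-EspaceBanach-E}.

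The main obstacle is uniqueness, for which I need to bring an arbitrary $\mathbb{E}$-valued solution back to a single Banach level. Let $\delta : J \to \mathbb{E}$ be any $C^1$ solution of \eqref{Eq_ED-EspaceBanach-E} with $\delta(t_0) = x$. Its image $\delta(J)$ is compact in $\mathbb{E}$; because the $c^\infty$-topology of $\mathbb{E}$ coincides with the direct-limit topology (recalled via \cite{CaPe}, Proposition 20), the classical fact that a compact subset of a strict ascending union of Banach spaces with closed inclusions lies in some step produces an integer $m \geq n_1$ with $\delta(J) \subset \mathbb{E}_m$. Since $\mathbb{E}_m$ inherits its own Banach topology as a subspace of $\mathbb{E}$, $\delta$ becomes a $C^1$ curve in $\mathbb{E}_m$ solving $x' = f_m(t, x)$ with $\delta(t_0) = x \in V_m$, and Banach-level uniqueness then forces $\delta = \gamma_m^x = \gamma^x$ on all of $J$.
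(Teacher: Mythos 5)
Your proof is correct and follows essentially the route the paper intends: Theorem \ref{existsol} is stated without a written proof, but the discussion preceding it (level-wise Picard--Lindel\"of under \textbf{(}$A_n$\textbf{)}, a uniform $\tau$ extracted from \textbf{(B)}, the balls controlled by \textbf{(C)}) together with Remark \ref{existsolpt} (gluing the $\gamma_n^{x_n}$ via $f_{n+1}|_{I\times\mathbb{E}_n}=f_n$ and level-wise uniqueness) is exactly what you implement, including your sensible normalization of the radii so that $V_n=B_n(a_{n_0},r_{n_0}-\tau K_{n_0})$ becomes visibly ascending. Your only genuine addition is the uniqueness argument, in which you localize an arbitrary $\mathbb{E}$-valued solution into a single step $\mathbb{E}_m$ using the fact that compact (hence bounded) subsets of the strict direct limit lie in some $\mathbb{E}_n$ (\cite{CaPe}, Lemma 19, invoked elsewhere in the paper); this step, which the paper leaves implicit but which is needed for the word ``unique'' in the statement, is handled correctly modulo the easily supplied compactness observation that one may enlarge $m$ so that $\delta(J)\subset U_m$ before applying Gronwall at level $m$.
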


\begin{remark}
\label{existsolpt} \normalfont We have  another simpler criterion of existence of an
integral curve for the ODE (\ref{Eq_ED-EspaceBanach-E}). Assume that for some
$a\in\mathbb{E}$, for each $n$, we have an integral curve $\gamma_{n}%
:[t_{0}-\tau,t_{0}+\tau]\rightarrow\mathbb{E}_{n}$ which is a solution of
(\ref{Eq_ED-EspaceBanach-En}) with initial conditions $\gamma_{n}(t_{0}%
)=a_{n}$, for all $n\geq n_{0}$. The assumption that the restriction of
$f_{n+1}$ to $I\times\mathbb{E}_{n}$ is $f_{n}$, implies that $\gamma
=\underrightarrow{\lim}\gamma_{n}$ is well defined on $[t_{0}-\tau,t_{0}%
+\tau]$ and is an integral curve of (\ref{Eq_ED-EspaceBanach-E}) with initial
condition $\gamma(t_{0})=a$.
\end{remark}


\subsection{Comments and Examples}${}$

\label{comments}

The validity of the assumptions \textbf{(}$\mathbf{A}_{n}$\textbf{)} for all
$n\geq n_{0},$ \textbf{(B)} and \textbf{(C)} suggests the following comments:

\begin{comments}
\label{comment}${}$\normalfont

\begin{enumerate}
\item[1.] According to the previous notations, we set:
\[
K_{n}^{0}(a)=\sup_{t\in I}||f_{n}(t,a)||_{n}\;\;\text{ and }K_{n}^{1}%
(a)=\sup_{t\in I}||D_{2}f_{n}(t,a)||_{n}^{\mathrm{op}}%
\]
Assume that we have

\begin{enumerate}
\item[\textbf{(A}$_{n}^{\prime}$\textbf{)}] $K=\sup_{n\in\mathbb{N}^{\ast}%
}(K_{n}^{0}(a),K_{n}^{1}(a))<\infty$, and there exists an ascending sequence
$\left(  U_{n}\right)  $ of neighbourhoods of $a_{n}$ such that \textbf{(}%
$\mathbf{A}_{n}$\textbf{)} is true for $K_{n}=2\sup(K_{n}^{0}(a),K_{n}%
^{1}(a))$;

\item[\textbf{(B')}] $r=\inf_{n\in\mathbb{N}^{\ast}}(r_{n})>0$.
\end{enumerate}

Then, for $0<\tau\leq\inf(r/2K,T)$, all the conditions \textbf{(}%
$\mathbf{A}_{n}$\textbf{)}, \textbf{(B)} and\textbf{ (C)} are satisfied, and
so we can apply Theorem \ref{existsol} (\textit{cf.} Example \ref{exexistsol}-3.)

\item[2.] Note that for each $n$, the assumption \textbf{(}$\mathbf{A}_{n}%
$\textbf{) }is always satisfied at least on some neighbourhood $W_{n}$ of
$a_{n}\in U_{n}$, but since the restriction of $f_{n+1}$ to $U_{n}$ is $f_{n}$
then $K_{n+1}\leq K_{n}$ and $W_{n+1}$ can eventually not contain $W_{n+1}$.
So even after reduction of $U_{n}$ to $W_{n}\cap W_{n_{0}}$, the sequence
$W_{n}\cap W_{n_{0}}$ can be not an ascending sequence of open sets and again
we can have $$\bigcap\limits_{n\geqslant n_{0}}W_{n}\cap W_{n_{0}}=\{a\}.$$ For
instance, under the assumption \textbf{(}$\mathbf{A}_{n}^{\prime}$\textbf{)},
since each $f_{n}$ is a Lipschitz map and the restriction of $f_{n+1}$ to
$U_{n}$ is $f_{n}$, the sequence of open sets
\[
W_{n}=\{x_{n}\in U_{n}\;:\forall t\in I,\;||f_{n}(t,x_{n})||_{n}\leq K\;\}
\]
will be a decreasing sequence of open sets and so the previous situation is
generally what happens (\textit{cf.} Example \ref{exexistsol}-4).

\item[3.] However, assume that we can choose an ascending sequence $\left(
U_{n}\right)  $ such that \textbf{(}$\mathbf{A}_{n}$\textbf{)} is true for all
$n\geq n_{0}$. Since $\left(  K_{n}\right)  $ is an increasing sequence, the
assumption \textbf{(B)} is a condition on the comparison between the growth of
$K_{n}$ and of the diameter of $U_{n}$ and there is no general reason for
which \textbf{(B)} is satisfied (\textit{cf.} Example \ref{exexistsol}-4).

\item[4.] Assume that \textbf{(}$\mathbf{A}_{n}$\textbf{) }and \textbf{(B)}
are satisfied, it is clear that there can exist some opposition between
\textbf{(B)} and \textbf{(C)}.
\end{enumerate}

\textit{\textbf{In conclusion,} according to the fact that we impose the
existence of a sequence of coherent norms, these conditions are too much
strong, the theorem \ref{existsol} seems to be of weak interest \textbf{in the
general case}}.\newline
\end{comments}

\begin{examples}\normalfont
\label{exexistsol} ${}$In all the following examples, unless otherwise
specified, we assume that each ascending sequence $\left(  \mathbb{E}%
_{n}\right)  $ of Banach spaces is provided with a sequence $\left(
\left\Vert \ \right\Vert _{n}\right)  $ of coherent norms and we will use the
previous notations.

\begin{enumerate}
\item[1.] Assume that $f_{n}$ is a linear map in the second variable. Since
the interval $I$ is compact, the assumption \textbf{(}$\mathbf{A}_{n}%
$\textbf{)} is true for each $n\geq n_{0}$ and for any neighbourhood $U_{n}$
of $a$. Note that $K_{n}^{1}(a)$ does not depend on $a$ and on $U_{n}%
={B}(a_{n},R_{n})$ we have
\[
\forall(t,x_{n})\in I\times U_{n},\ ||f_{n}(t,x_{n})||_{n}\leq K_{n}^{1}%
R_{n}\;
\]
If the sequence $(K_{n})$ is bounded, this implies that we can always find a
sequence $\left(  R_{n}\right)  $ such that \textbf{(}$\mathbf{A}_{n}%
$\textbf{), (B) }and \textbf{(C)} are satisfied and Theorem \ref{existsol}
can be applied. Note that, since  for each $n$, we have a linear ODE, we can
directly show the existence of solutions for the ODE
(\ref{Eq_ED-EspaceBanach-E}) simply by using the general result about the
existence of solutions of a linear ODE in a Banach space and Remark
\ref{existsolpt}. Of course, the first description must be considered as an
illustration of the Theorem \ref{existsol}. This also justifies the
conclusions at the end of Comments \ref{comment}.

\item[2.] Assume that $f_{n}=f_{N}$, for all $n>N$. We can have this situation
when $\mathbb{E}_{n}=\mathbb{E}_{N}$ for $n\geq N$ or also when $\mathbb{E}%
_{n}$ is supplemented in $\mathbb{E}_{n+1}$, for $n\geq N$. Then again in this
case, clearly all the assumption \textbf{(}$\mathbf{A}_{n}$\textbf{), (B) }and
\textbf{(C)} are satisfied but, of course, we can get the result directly
without using such conditions.

\item[3.] We provide $\mathbb{R}^{n}$ with the norm $||(x_{1},\dots
,x_{n})||_{n}=\sum\limits_{i=1}^{n}|x_{i}|$. Then $(||\;||)_n$ is a coherent sequence of
norms. Note that if $x\in\mathbb{R}^{\infty}=\underrightarrow{\lim}%
\mathbb{R}^{n}$, there exists an integer $n$ such that $x$ belongs to
$\mathbb{R}^{n}$ and if $n_{0}$ is the smallest of such integers, we have
$||x||=||(x_{1},\dots,x_{n_{0}})||_{n_{0}}$. \newline 
Now, on $\mathbb{R}$, we consider the function $\theta:u\mapsto u^{2}+1$. Then the family of solutions
of the differential equation $u^{\prime}=\theta(u)$ is given by $u(t)=\tan
(t-C)$ for any constant $C$. For $t\in\lbrack-1,1]$ and $y_{i}\in ]-\frac{\pi}{2},\frac{\pi}{2}[$, we set
\[
\varphi_{i}(t,y_{i})=\frac{t}{i^{2}}(y_{i}^{2}+1).
\]
Let $U_{n}$ be the open set $(\left]  -\dfrac{\pi}{2},\dfrac{\pi}{2}\right[
)^{n}\subset\mathbb{R}^{n}$. On $U_{n}$, we consider the differential equation
$x_{n}^{\prime}=f_{n}(t,x_{n})$ where $f_{n}(t,x_{n})=(\varphi_{1}%
(t,y_{1}),\dots,\varphi_{n}(t,y_{n}))$ and $x_{n}=(y_{1},\dots,y_{n})$. Then,
on $U_{n}$, we have%
\[
\forall(t,x_{n})\in\lbrack-1,1]\times U_{n},\ ||f_{n}(t,x_{n})||_{n}\leq\left(
\frac{\pi^{2}}{4}+1\right)  \left(  \sum\limits_{i=1}^{n}\dfrac{1}{i^{2}%
}\right)
\]
\[
\forall(t,x_{n})\in\lbrack-1,1]\times U_{n},\ ||D_{2}f_{n}(t,x_{n})||_{n}%
^{\mathrm{op}}\leq\left(  \frac{\pi^{2}}{4}+1\right)  \left(  \sum_{i=1}^{n}\frac
{1}{i^{2}}\right)
\]

So the assumption \textbf{(}$\mathbf{A}_{n}$\textbf{)} is satisfied for all
$n>0$. Note that we have
\[
U=\underrightarrow{\lim}U_{n}=(]-\frac{\pi}{2},\frac{\pi}{2}[)^{\infty}%
\subset\mathbb{R}^{\infty}.
\]

If $K_{n}=\left(  \dfrac{\pi^{2}}{4}+1\right)  \left(  \sum\limits_{i=1}%
^{n}\dfrac{1}{i^{2}}\right)  $ then we have $K_{n}\leq\left(  \dfrac{\pi^{2}%
}{4}+1\right)  \times\dfrac{\pi^{2}}{6}$. Now the closed ball $\bar{B}%
_{n}(0,\dfrac{3}{2}))$ is contained in $U_{n}$ for each $n>0$ and we have
\[
\frac{3}{2K_{n}}\geq\left(  \dfrac{\pi^{2}}{4}+1\right)  \times\dfrac{\pi^{2}%
}{6}>1
\]
Thus the assumption \textbf{(B)} is satisfied.\newline

We set $K=\left(  \dfrac{\pi^{2}}{4}+1\right)  \times\dfrac{\pi^{2}}{6}>0$ and
we choose $\tau=\dfrac{1}{K}$. Then the assumption \textbf{(C)} is satisfied.
Therefore, we can apply Theorem \ref{existsol} and its conclusion is valid on
the ball $B(0,\dfrac{1}{2})$.

\item[4.] With the same notations as in the previous example we take
\[
\varphi_{i}(t,y_{i})=\frac{t}{i}(y_{i}^{2}+1).
\]
If $f_{n}(t,x_{n})=(\varphi_{1}(t,y_{1}),\dots,\varphi_{n}(t,y_{n}))$, then in
this case on $U_{n}=\left(  \left]  -\dfrac{\pi}{2},\dfrac{\pi}{2}\right[
\right)  ^{n}$, we have%
\[
\forall(t,x_{n})\in\lbrack-1,1]\times U_{n},\ ||f_{n}(t,x_{n})||_{n}%
\leq\left(  \frac{\pi^{2}}{4}+1\right)  \left(  \sum_{i=1}^{n}\frac{1}%
{i}\right)
\]

\[
\forall(t,x_{n})\in\lbrack-1,1]\times U_{n},\ ||D_{2}f_{n}(t,x_{n}%
)||_{n}^{\mathrm{op}}\leq\left(  \frac{\pi^{2}}{4}+1\right)  \left(
\sum_{i=1}^{n}\frac{1}{i}\right)
\]
$\;$

Of course, the condition \textbf{(}$\mathbf{A}_{n}$\textbf{)} will be
satisfied. But, in this situation, for any closed ball $\bar{B}(0,r_{n})$
contained in $U_{n}$ we have $r_{n}\leq\pi/2$. Since the sequence $\left(
\sum\limits_{i=1}^{n}\dfrac{1}{i}\right)  _{n\in\mathbb{N}^{\ast}}$is not
bounded, the condition \textbf{(B)} is not satisfied. Now, we can impose that
for $n\geq N$, we have a constant $K>0$ such that:%
\[
\forall(t,x)\in\lbrack-1,1]\times U_{n},\ ||f_{n}(t,x)||_{n}\leq K
\]%
\[
\forall(t,x)\in\lbrack-1,1]\times U_{n},\ ||D_{2}f_{n}(t,x)||_{n}%
^{\mathrm{op}}\leq K
\]

Then \textbf{(B)} and \textbf{(C)} will be satisfied. But for having such a
relation on $U_{n}$, we must shrink $U_{n}$ for $n>N$ and it is easy to see
that, in this case, we will have $\bigcap\limits_{n\in\mathbb{N}^{\ast}}%
U_{n}=\{0\}$, and so the conditions \textbf{(}$\mathbf{A}_{n}$\textbf{)}
cannot be satisfied.\newline
\end{enumerate}
\end{examples}

\bigskip

\noindent{\small {\textsc{Unit\'e Mixte de Recherche 5127 CNRS, Universit\'e
de Savoie Mont Blanc\newline Laboratoire de Math\'ematiques (LAMA) }%
\newline\textit{Campus Scientifique \newline73370 Le Bourget-du-Lac, France}%
}\newline\noindent{\textsc{E-mail address: }%
\textit{fernand.pelletier@univ-smb.fr}} }

\end{document}